\newtheorem{theorem}{Theorem}[section]
\newtheorem{corollary}[theorem]{Corollary}
\newtheorem{definition}[theorem]{Definition}
\newtheorem{lemma}[theorem]{Lemma}
\newtheorem{notation}[theorem]{Notation}
\newtheorem{proposition}[theorem]{Proposition}
\newtheorem{remark}[theorem]{Remark}
\DeclareMathOperator{\cnx}{div}
\DeclareMathOperator{\diff}{d}
\renewcommand\({\left(}
\renewcommand\){\right)}
\def\nc{\newcommand}
\def\be{\beta}
\def\lam{\lambda}
\def\ra{\rightarrow}
\def\la{\leftarrow}
\def\D{\la D\ra}
\nc\pa{\partial}
\nc\CC{\mathbb{C}}
\nc\RR{\mathbb{R}}
\nc\QQ{\mathbb{Q}}
\nc\ZZ{\mathbb{Z}}
\nc\NN{\mathbb{N}}
\def\ba{\begin{align}}
\def\bad{\begin{aligned}}
\def\be{\begin{equation}}
\def\ea{\end{align}}
\def\ead{\end{aligned}}
\def\ee{\end{equation}}
\def\e{\eqref}
\def\dalpha{\diff \! \alpha}
\def\dt{\diff \! t}
\def\dtau{\diff \! \tau}
\def\dh{\diff \! h}
\def\dr{\diff \! r}
\def\dx{\diff \! x}
\def\dxi{\diff \! \xi}
\def\dy{\diff \! y}
\def\dz{\diff \! z}
\def\fract{\frac{\diff}{\dt}}
\def\defn{\mathrel{:=}}
\def\eps{\varepsilon}
\def\la{\left\vert}
\def\lA{\left\Vert}
\def\bla{\big\vert}
\def\blA{\big\Vert}
\def\le{\leq}
\def\les{\lesssim}
\def\mez{\frac{1}{2}}
\def\ra{\right\vert}
\def\rA{\right\Vert}
\def\bra{\big\vert}
\def\brA{\big\Vert}
\def\tdm{\frac{3}{2}}
\def\xR{\mathbb{R}}
\def\xS{\mathbb{S}}
\def\a{\alpha}
\def\ca{\check{\a}}
\def\Lr{\mathcal{L}}
\begin{document}

\title{Quasilinearization of the 3D Muskat equation, and applications to the critical Cauchy problem}
\author{Thomas Alazard}
\author{Quoc-Hung Nguyen}
\date{}

\begin{abstract}
We exhibit a new decomposition of the nonlinearity for the Muskat equation and use it to commute Fourier multipliers with the equation. 
This allows to study solutions with critical regularity. 
As a corollary, we obtain the first well-posedness result for arbitrary large data in the 
critical space $\dot{H}^2(\xR^2)\cap W^{1,\infty}(\xR^2)$. Moreover, we prove the existence of solutions 
for initial data which are not Lipschitz.
\end{abstract}   

\maketitle

\section{Introduction}

The Muskat problem dictates the evolution of a time-dependent 
interface $\Sigma(t)$ separating 
two fluids in porous media obeying Darcy's law (\cite{darcy1856fontaines,Muskat}). 
We assume that the interface is a graph, 
so that at time $t\ge 0$, 
$$
\Sigma(t)=\{ (x,f(t,x))\,:\, x\in \xR^d\}\subset \xR^{d+1},
$$
with $d=2$ (resp.\ $d=1$) for a 3D fluid domain (resp.\ 2D). 
It has long been understood that the Muskat problem can be reduced 
to a parabolic evolution equation for the unknown function $f$  
(see~\cite{CaOrSi-SIAM90,EsSi-ADE97,PrSi-book,SCH2004}). 
C\'ordoba and Gancedo~\cite{CG-CMP} have 
studied this problem using contour integrals, and 
obtained a beautiful formulation of the Muskat equation 
in terms of finite differences, which reads
\be\label{Muskat}
\partial_tf(t,x)=\frac{1}{2^{d-1}\pi}\int_{\mathbb{R}^d}
\frac{\alpha\cdot\nabla_x\Delta_\alpha f(t,x)}{\langle \Delta_\alpha f(t,x)\rangle^{d+1}}\frac{\dalpha}{|\alpha|^{d}},\qquad d=1,2,
\ee
where $\Delta_\alpha f$ is the slope, defined by
\begin{equation*}
\Delta_{\alpha}f(t,x)=\frac{f(t,x)-f(t,x-\alpha)}{|\alpha|},
\end{equation*}
and where, for $d=2$, 
\begin{equation*}
\la\a\ra=\sqrt{\a_1^2+\a_2^2},\quad \langle \Delta_\alpha f\rangle=\sqrt{1+(\Delta_\alpha f)^2},\quad \a\cdot\nabla_x=\alpha_1\partial_{x_1}+\a_2\partial_{x_2},
\end{equation*}
with obvious modifications for $d=1$.

We are inspired by many recent works where 
this formulation is used to study 
the Cauchy problem: in particular 
Constantin, C\'{o}rdoba, Gancedo, 
Strain~\cite{CCGRPS-JEMS2013}, 
Constantin, Gancedo, Shvydkoy and Vicol~\cite{CCGRPS-AJM2016}, Matioc~\cite{Matioc2}, 
C{\'o}rdoba and Lazar~\cite{Cordoba-Lazar-H3/2}, Alazard and Lazar~\cite{Alazard-Lazar}, 
Gancedo and Lazar~\cite{Gancedo-Lazar-H2} (other references are given in \S\ref{S:1.2}). Inspired by the latter, we have 
studied in~\cite{AN1,AN2,AN3} the Cauchy problem for solutions of the 2D Muskat's equation with 
critical regularity. 
The main result of the previous series of papers is established in~\cite{AN3}. It reads 
that, for $d=1$, the Cauchy problem for the 
Muskat equation is well-posed on the endpoint Sobolev space $H^{\tdm}(\xR)$. 
Let us make it clear from the start that we are not going to extend this result to the 3D setting (and, according to the authors' understanding of the problem, it is not certain that this is possible).
The main goal of the present paper is to show that it is possible to solve the Cauchy 
problem for arbitrary large data in the 
critical space $\dot{H}^2(\xR^2)\cap W^{1,\infty}(\xR^2)$. Moreover, we prove the existence of solutions 
for initial data which are not Lipschitz.

A guiding principle in the study of free boundary 
problems with potential flows is that problems in 2D are simpler than those in 3D, 
because complex function theory can be used in the first case. Such a 
dichotomy is well understood  in the analysis of the water-wave 
equations (see \cite{Ai-Ifrim-Tataru-2020-global,Wu-2020-quartic} 
for recent results exploiting the characteristics of 2D equations). 
For the Muskat equation in subcritical spaces, 
there exist different ways to overcome the difficulties that appear in 3D. 
We refer to the recent article by Nguyen and Pausader~\cite{Nguyen-Pausader}, 
which studied the Muskat equation in a general framework, with an approach based on a 
paradifferential analysis of the Dirichlet-to-Neumann operator (\cite{ABZ-Invent}). 
However, the different approaches used so far seem, according to our 
understanding of the problem, 
inappropriate for the study of solutions with critical 
regularity. 
In this direction, the first breakthrough result in 3D was obtained by 
Gancedo and Lazar~\cite{Gancedo-Lazar-H2}, who 
worked with a new formulation of the 3D Muskat equation in terms 
of oscillatory integrals, and obtained 
the first result of global existence in
critical Sobolev space, for sufficiently small initial data. We will 
consider a different approach, which allows 
to study solutions with critical regularity and large data. In addition, 
we will obtain another proof of the main result of~\cite{Gancedo-Lazar-H2}. 

\subsection{The equation and its scaling invariance}
The Muskat equation is 
invariant by the transformation:
\be\label{acritical}
f(t,x)\mapsto f_\lambda(t,x)\defn\frac{1}{\lambda}f\left(\lambda t,\lambda x\right).
\ee
Then, by a direct calculation, one verifies that the spaces $\dot{W}^{1,\infty}(\xR^d)$ 
and $\dot{H}^{1+\frac{d}{2}}(\xR^{d})$ are critical spaces 
for the study of the Cauchy problem. This means that
$$
\lA \nabla f_\lambda\big\arrowvert_{t=0}
\rA_{L^{\infty}}=\lA \nabla f_0\rA_{L^{\infty}},\quad 
\lA f_\lambda\big\arrowvert_{t=0}
\rA_{\dot H^{1+\frac{d}{2}}}=\lA f_0\rA_{\dot H^{1+\frac{d}{2}}}.
$$
Given a real number $s\ge 0$, we denote by 
$H^s(\xR^2)$ (resp.\ $\dot{H}^{s}(\xR^2)$) the classical Sobolev (resp.\ homogeneous Sobolev) 
space of order $s$. 
They are equipped with the norms defined by
\be\label{norm:Sobolev}
\lA u\rA_{\dot{H}^{s}}^2=\int_{\xR^d} \la \xi\ra^{2s}\bla \hat{f}(\xi)\bra^2\dxi,
\quad 
\lA u\rA_{H^{s}}^2=\lA u\rA_{\dot{H}^s}^2+\lA u\rA_{L^2}^2.
\ee
In particular, when $d=2$, we have $\lA u\rA_{\dot{H}^{2}(\xR^2)}= (2\pi)^2\lA \Delta u\rA_{L^2(\xR^2)}$. 

\subsection{Cauchy problem}\label{S:1.2}

The study of the Cauchy problem for the Muskat equation 
with smooth enough initial data goes back to the works 
of Yi~(\cite{Yi2003}), Caflisch, Howison and Siegel 
 (\cite{SCH2004}), Ambrose~(\cite{Ambrose-2004,Ambrose-2007}) 
and C\'ordoba, C\'ordoba and Gancedo (\cite{CG-CMP,CCG-Annals}). 
This problem was extensively studied in the last decade. 
In particular, there are many local well-posedness results 
in sub-critical spaces: we refer to the works of  
Constantin, Gancedo, Shvydkoy and Vicol~\cite{CGSV-AIHP2017} 
for initial data in the Sobolev space 
$W^{2,p}(\xR)$ for some $p>1$, Cheng, Granero-Belinch\'on and 
Shkoller~\cite{Cheng-Belinchon-Shkoller-AdvMath} and Matioc~\cite{Matioc1,Matioc2} for initial data 
in $H^s(\xR)$ with $s > 3/2$ (see also~\cite{Alazard-Lazar,Nguyen-Pausader}), and Deng, Lei 
and Lin~\cite{DLL} for 2D initial data whose derivatives are H\"older continuous. 
The Muskat equation being parabolic, the proof of the local well-posedness 
results also provides global well-posedness results under a smallness assumption. 
The first global well-posedness results 
under mild smallness assumptions, namely assuming 
that the Lipschitz semi-norm is smaller than $1$, was 
obtained by Constantin, C{\'o}rdoba, Gancedo, Rodr{\'\i}guez-Piazza 
and Strain~\cite{CCGRPS-AJM2016} (see also \cite{CGSV-AIHP2017,PSt}).

In addition, there are three different 
cases where the Cauchy problem for solutions with critical regularity is well-understood: 

\begin{enumerate}[(i)]
\item \textbf{For Lipschitz initial data}, see  Camer\'on~\cite{Cameron,Cameron2020global,Cameron2020eventual}. 

\item \textbf{For small enough initial data}, see the results by C\'ordoba and Lazar~\cite{Cordoba-Lazar-H3/2} 
and Gancedo and Lazar~\cite{Gancedo-Lazar-H2} mentioned above.

\item \textbf{In one dimension}, see \cite{AN2,AN3} for the study of the 
Cauchy problem with arbitrary 
initial data in the one-dimensional critical Sobolev space $H^{3/2}(\xR)$. 
Compared to the Lipschitz or small data cases, we need to address the three following difficulties: 
$(i)$ the parabolic behavior degenerates at that level of regularity; $(ii)$ the lifespan of the solution 
depends on the initial data themselves and not only on its norm, and $(iii)$ the space $H^{3/2}(\xR)$ is not a Banach algebra (compared 
to $W^{1,\infty}(\xR)$). 
\end{enumerate}

In addition to these well-posedness results, let us mention that 
the existence and possible non-uniqueness 
of weak-solutions has also been thoroughly studied (we refer the reader to~\cite{Brenier2009,cordoba2011lack,szekelyhidi2012relaxation,castro2016mixing,forster2018piecewise,noisette2020mixing}). There are also blow-up results for certain large data; see the papers by 
Castro, C\'{o}rdoba, Fefferman, Gancedo and 
L\'opez-Fern\'andez~\cite{CCFG-ARMA-2013,CCFG-ARMA-2016,CCFGLF-Annals-2012}. 
They have proved that there are solutions such that 
at time $t=0$ the interface is a graph, at a subsequent time $t_1>0$ 
the interface is not a graph and then at a later time $t_2>t_1$, 
the interface is $C^3$ but not $C^4$. This result explains why 
it is interesting to prove the existence of solutions whose slopes 
can be arbitrarily large (as in~\cite{DLL,Cameron,Cordoba-Lazar-H3/2,Gancedo-Lazar-H2}) or even infinite 
(as we proved in~\cite{AN1,AN2,AN3}). 
Another consequence of this blow-up result is that 
one cannot prove a well-posedness result for large 
data such that the lifespan would depend only on the norm of the initial data (otherwise one would get a global existence result for any initial data 
by a scaling argument). This means that the lifespan necessarily depends on the profile of the initial data. 
This is a well-known problem in the analysis of several 
dispersive equations, or other parabolic equations (see in particular \cite{KNV-2007,Caffarelli-Vasseur-AoM,CV-2012,Silvestre-2012,VaVi,NgYa}). Here the problem is more difficult since the Muskat equation is fully nonlinear.

We will prove two different results about the Cauchy problem. 
Our main result is a local well-posedness result 
for arbitrary initial data in the critical space $W^{1,\infty}(\xR^2)\cap \dot{H}^{2}(\xR^2)$. 
An important new point is that we do not require a control of the low frequencies in $L^2(\xR^2)$. 
Namely, we will prove the following 
\begin{theorem}\label{main}
For any initial data $f_0$ in $W^{1,\infty}(\xR^2)\cap \dot H^{2}(\xR^2)$, 
there exists a time $T>0$ such that 
the Cauchy problem for the Muskat equation has a unique solution
\begin{equation*}
f\in L^\infty\big([0,T];W^{1,\infty}(\xR^2)\cap \dot H^{2}(\xR^2)\big) \cap L^2(0,T;\dot H^\frac52(\xR^2)).
\end{equation*}
\end{theorem}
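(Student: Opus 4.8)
The plan is to prove Theorem~\ref{main} by the usual parabolic scheme --- construct approximate solutions, derive uniform a priori estimates, pass to the limit, and establish uniqueness --- but with all estimates performed at the scaling-critical level $\dot H^2(\xR^2)\cap W^{1,\infty}(\xR^2)$, using the quasilinearization of the nonlinearity advertised in the abstract. The starting point is to rewrite the right-hand side of \eqref{Muskat} in a form
\[
\partial_t f = -V(f)f + R(f),
\]
where $V(f)$ is a positive, self-adjoint, first-order elliptic operator (morally $(-\Delta)^{1/2}$ composed with a bounded positive multiplier depending on $\nabla f$) and $R(f)$ is a remainder that is better behaved. The key algebraic input is the new decomposition of the finite-difference nonlinearity: writing $\alpha\cdot\nabla_x\Delta_\alpha f = \Delta_\alpha f + \alpha\cdot(\nabla f(x)-\nabla f(x-\alpha))/|\alpha|$ and Taylor-expanding $\langle\Delta_\alpha f\rangle^{-(d+1)}$, one isolates the leading ``Hilbert-transform-type'' piece and a genuinely quadratic-in-$\nabla^2 f$ part. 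This is the step I expect to be the main obstacle: making the splitting so that the paracommutator of a Fourier multiplier (here one essentially commutes $\Delta$ past the equation) with the nonlinearity lands in $L^2$ with a bound controlled by $\|f\|_{W^{1,\infty}}\|f\|_{\dot H^{5/2}}$ plus lower-order terms, without ever invoking an $L^2$ bound on $f$ itself or on $\nabla f$. Everything downstream --- energy estimate, parabolic smoothing, lifespan --- follows once this commutator estimate is in hand.

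For the a priori estimate I would apply $\Delta$ to the equation, pair with $\Delta f$ in $L^2(\xR^2)$, and use the coercivity of $V(f)$ to produce the good term $\int_0^T \|\Delta f\|_{\dot H^{1/2}}^2\,\dt \sim \int_0^T\|f\|_{\dot H^{5/2}}^2\,\dt$ on the left. The right-hand side, after the commutator decomposition, is bounded by $\int_0^T \big(\|\nabla f\|_{L^\infty}+\|f\|_{\dot H^2}\big)\|f\|_{\dot H^2}\|f\|_{\dot H^{5/2}}\,\dt$, which is absorbed by Young's inequality into the good term plus $\int_0^T(\cdots)\|f\|_{\dot H^2}^2\,\dt$; to close the $W^{1,\infty}$ part one separately propagates $\|\nabla f\|_{L^\infty}$ using the maximum-principle structure (the Córdoba--Gancedo-type estimate that $\|\nabla f(t)\|_{L^\infty}$ is non-increasing, or at worst controlled on a short time). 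Because the estimate is critical, the smallness needed to run Gronwall is not smallness of the norm but smallness of a tail: one first chooses a high-frequency truncation parameter $N$ so that the part of $f_0$ in $\dot H^2$ above frequency $N$ is small, and then the lifespan $T$ depends on $N$ (equivalently, on the profile of $f_0$), exactly as flagged in item~(iii) of the introduction.

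For existence I would regularize --- e.g. mollify the kernel or add $\eps\Delta^2$ --- solve the regularized problem by a fixed-point argument in $L^\infty([0,T];\dot H^2\cap W^{1,\infty})\cap L^2([0,T];\dot H^{5/2})$, obtain bounds uniform in $\eps$ from the a priori estimate above, and extract a weak-* limit; the $L^2_t\dot H^{5/2}$ bound gives enough compactness in space (locally) to pass to the limit in the nonlinear finite-difference integral, using the pointwise kernel bounds to handle the $\alpha$-integral. Uniqueness is the contraction estimate: for two solutions $f_1,f_2$ the difference $g=f_1-f_2$ satisfies $\partial_t g + V(f_1)g = \big(V(f_2)-V(f_1)\big)f_2 + (R(f_1)-R(f_2))$, and pairing with $g$ in, say, $\dot H^1$ (one derivative below the energy space, to leave room) and using the quasilinear structure plus the $L^2_t\dot H^{5/2}$ regularity of $f_2$ yields $\|g(t)\|_{\dot H^1}^2 \le \|g(0)\|_{\dot H^1}^2 \exp\!\big(\int_0^t m(s)\,ds\big)$ with $m\in L^1(0,T)$; hence $g\equiv 0$. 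The delicate point throughout is that none of these pairings may cost an $L^2$ norm of $f$, so at each step the worst factor must be either $\|\nabla f\|_{L^\infty}$ or a homogeneous Sobolev norm, which is precisely what the commutator decomposition is designed to guarantee.
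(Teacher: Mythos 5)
Your overall scheme (quasilinearize, commute a Fourier multiplier with the equation, close a critical energy estimate in $\dot H^2\cap W^{1,\infty}$, regularize and pass to the limit, contraction for uniqueness) matches the paper's architecture, but there are three substantive differences, two of which are genuine gaps.

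\emph{The algebraic decomposition.} The identity you propose as the ``key algebraic input,'' namely
$\alpha\cdot\nabla_x\Delta_\alpha f = \Delta_\alpha f + \alpha\cdot(\nabla f(x)-\nabla f(x-\alpha))/|\alpha|$, is false: the second summand on the right \emph{is} $\alpha\cdot\nabla_x\Delta_\alpha f$, so the claimed identity would force $\Delta_\alpha f\equiv 0$. What is true (and is Lemma~\ref{L:2.2}) is that $-\int\langle\ca\cdot\zeta\rangle^{-3}\,\alpha\cdot\nabla_x\Delta_\alpha g\,\dalpha/|\alpha|^2 = \int\langle\ca\cdot\zeta\rangle^{-3}\,\Delta_\alpha g\,\dalpha/|\alpha|^2$, i.e.\ the trade is an integration by parts in $\alpha$, not a pointwise identity. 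More importantly, you collapse the nonlinearity to ``elliptic $+$ remainder,'' whereas Proposition~\ref{P:3.1} produces a \emph{three-term} splitting $\Lr(f)g=P(f)g+V(f)\cdot\nabla_x g+R(f,g)$. The middle transport term is of order one and cannot be absorbed into the remainder: it is handled by a genuine antisymmetry trick, $\langle V(f)\cdot\mathcal{R}g,g\rangle=-\tfrac12\langle[\mathcal{R},V(f)]g,g\rangle$, together with the commutator bound of Proposition~\ref{X1}. If you leave it in ``$R(f)$'' your energy estimate does not close at critical regularity.

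\emph{The mechanism for large data.} This is the heart of the theorem and your route is genuinely different from the paper's. You propose the standard high/low frequency split with a cutoff $N$ so that $P_{\geq N}f_0$ is small in $\dot H^2$ and argue the lifespan then depends on $N$. The paper does not do this; instead it introduces weighted fractional Laplacians $\D^{s,\phi}$, chooses an admissible weight $\kappa$ (hence $\phi$) \emph{adapted to the initial data} so that $f_0\in\dot H^{2,\phi}$ (Lemma~\ref{L:critical}), and proves in Theorem~\ref{mainthm0} an energy inequality in which the dangerous term carries the extra gain factor $\big(\phi(B_\phi/A_\phi)\big)^{-1}$, which vanishes along the parabolic trajectory and lets the estimate close for arbitrary size. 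This is exactly where the ``profile dependence'' of the lifespan enters, and it is also what makes the $\dot H^{2,\log^a}$ result of Theorem~\ref{theo:main3} drop out for free. Your frequency-split plan is not obviously wrong, but it is not carried out, and for a \emph{fully nonlinear} nonlocal equation the interaction of the low-frequency (large, merely smooth) part with the high-frequency perturbation is where the argument would actually have to be done; the paper's weighted approach circumvents precisely this.

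\emph{The Lipschitz bound.} You invoke a C\'ordoba--Gancedo maximum principle saying $\|\nabla f\|_{L^\infty}$ is nonincreasing. In 3D that is not what is used: the paper controls the slope via the Gancedo--Lazar estimate $\frac{\diff}{\dt}\|\nabla f\|_{L^\infty}\lesssim\|f\|_{\dot H^{5/2}}^2 +\ (\text{regularization errors})$, i.e.\ Corollary~\ref{Coro1}, which feeds back into the energy estimate and is why the $L^2_t\dot H^{5/2}$ control is essential to propagating the Lipschitz bound, not merely a bonus.

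Finally, a minor mismatch: the regularization used is not $\eps\Delta^2$ but the combination of a vanishing viscosity $|\log\eps|^{-1}\Delta$ with a truncation of the singular kernel at scale $\eps$ and mollified data (equation~\eqref{n2}), which keeps all the constants in the a priori estimates uniform and lets one take $\eps\to 0$ at fixed $T\le|\log\eps|$.
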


The proof of Theorem~\ref{main} 
will immediately give an alternative proof to the following result, first proved in~\cite{Gancedo-Lazar-H2}. 
\begin{theorem}\label{main-2}
There exists a positive constant $\delta$ such that, 
for any initial data $f_0$ in $W^{1,\infty}(\xR)\cap  \dot H^{2}(\xR)$ satisfying
\begin{equation*}
\big(1+\lA \nabla f_0\rA_{L^{\infty}}^3\big)\lA f_0\rA_{\dot H^{2}}\leq \delta,
\end{equation*}
the Cauchy problem for the Muskat equation has a unique global solution
\begin{equation*}
f\in L^\infty\big([0,+\infty); W^{1,\infty}(\xR^2)\cap \dot H^{2}(\xR^2)\big)
\cap L^2(0,+\infty;\dot H^\frac52(\xR^2)).
\end{equation*}
\end{theorem}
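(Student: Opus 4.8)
The plan is to derive Theorem~\ref{main-2} as a direct consequence of the a priori estimates that underlie Theorem~\ref{main}, by running the same argument globally in time under the smallness hypothesis. First I would recall the quasilinear structure advertised in the abstract: after commuting a suitable Fourier multiplier (here essentially $\Delta$, or $\langle D\rangle^2$) with the equation~\eqref{Muskat}, one writes the evolution of $\Delta f$ in the form $\partial_t \Delta f + \mathcal{L}_f \Delta f = \mathcal{R}_f$, where $\mathcal{L}_f$ is an elliptic operator of order one whose coercivity constant is controlled from below in terms of $\langle \lA \nabla f\rA_{L^\infty}\rangle^{-3}$ (this is the source of the cubic weight $1+\lA\nabla f_0\rA_{L^\infty}^3$ in the statement), and $\mathcal{R}_f$ collects lower-order commutator terms that are quadratic or higher in $f$ and are estimated in $L^2$ by $\lA f\rA_{\dot H^2}\lA f\rA_{\dot H^{5/2}}$ times a smooth function of $\lA\nabla f\rA_{L^\infty}$.

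Next I would set up the energy functional $E(t) = \lA f(t)\rA_{\dot H^2}^2$ and the dissipation $D(t) = \lA f(t)\rA_{\dot H^{5/2}}^2$, and prove the differential inequality
\begin{equation*}
\frac{\di}{\dt} E(t) + \frac{c}{1+\lA \nabla f(t)\rA_{L^\infty}^3}\, D(t) \le C\big(\lA\nabla f(t)\rA_{L^\infty}\big)\, E(t)^{1/2} D(t),
\end{equation*}
together with the Lipschitz bound $\lA \nabla f(t)\rA_{L^\infty} \le \lA\nabla f_0\rA_{L^\infty} + C\int_0^t \lA f\rA_{\dot H^{5/2}}\,\ds$ obtained from the maximum principle for the transport-diffusion equation satisfied by $\nabla f$ (or directly by the embedding $\dot H^{5/2}(\xR^2)\hookrightarrow \dot W^{1,\infty}(\xR^2)$ applied in an integrated-in-time form via Cauchy–Schwarz). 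A continuity/bootstrap argument then closes: assume on a maximal interval that $\lA\nabla f(t)\rA_{L^\infty} \le 2\lA\nabla f_0\rA_{L^\infty}$ and $E(t)\le 4E(0)$; on that interval the coercivity constant is bounded below and the right-hand side is absorbed into the dissipative term precisely when $E(0)^{1/2}$ is smaller than $\delta/(1+\lA\nabla f_0\rA_{L^\infty}^3)$, which gives $\frac{\di}{\dt}E + c' D \le 0$, hence $E(t)\le E(0)$ and $\int_0^\infty D \le C E(0)/c'$; feeding the latter into the Lipschitz bound and using smallness of $\delta$ once more recovers $\lA\nabla f(t)\rA_{L^\infty}\le \tfrac32\lA\nabla f_0\rA_{L^\infty}$, strictly improving the bootstrap assumptions. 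Therefore the maximal interval is $[0,+\infty)$, and the stated regularity class follows. Uniqueness is inherited from the uniqueness part of Theorem~\ref{main} applied on each finite subinterval.

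The main obstacle — already the heart of Theorem~\ref{main} — is establishing the commutator estimate for $\mathcal{R}_f$ with a \emph{linear} dependence on the top norm $\lA f\rA_{\dot H^{5/2}}$, i.e.\ no loss of derivative and no quadratic appearance of the dissipation norm, since in the critical space $\dot H^2(\xR^2)$ there is no room to spend. Concretely, one must show that the terms arising when $\Delta$ is moved across the finite-difference nonlinearity, which a priori look like $\dot H^3$-type quantities, can be rewritten using the new decomposition of the nonlinearity so that the worst contributions are genuine commutators gaining one derivative, leaving only products estimated by $\lA f\rA_{\dot H^2}\lA f\rA_{\dot H^{5/2}} \, G(\lA\nabla f\rA_{L^\infty})$ with $G$ smooth. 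Once that estimate is granted from the earlier sections, the global argument above is routine parabolic bootstrapping; the only care needed is to track the explicit power $3$ of $\lA\nabla f_0\rA_{L^\infty}$ through the coercivity constant so that the smallness condition matches the one in the statement.
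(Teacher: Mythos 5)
Your proposal follows essentially the same strategy as the paper: quasilinearize, derive a weighted energy–dissipation inequality with coercivity constant $\sim(1+\lA\nabla f\rA_{L^\infty}^3)^{-1}$, control the Lipschitz norm, and close by a continuity argument. However, there is a genuine error in the Lipschitz control. You write
\[
\lA\nabla f(t)\rA_{L^\infty}\le \lA\nabla f_0\rA_{L^\infty}+C\int_0^t\lA f\rA_{\dot H^{5/2}}\,\ds,
\]
with a \emph{linear} dependence on the dissipation norm, and you propose to obtain it either from the maximum principle or from the embedding $\dot H^{5/2}(\xR^2)\hookrightarrow\dot W^{1,\infty}(\xR^2)$ via Cauchy--Schwarz. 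This version cannot close a global bootstrap: the energy inequality only yields $\int_0^\infty\lA f\rA_{\dot H^{5/2}}^2\,\ds\les E(0)$, and Cauchy--Schwarz turns that into $\int_0^t\lA f\rA_{\dot H^{5/2}}\,\ds\le t^{1/2}\big(\int_0^t\lA f\rA_{\dot H^{5/2}}^2\,\ds\big)^{1/2}$, which grows like $t^{1/2}$ and destroys the global-in-time argument. The bound that actually works, and the one the paper uses (Lemma~\ref{L:2.1}, following the maximum-principle argument of Gancedo--Lazar), is \emph{quadratic} in the dissipation norm:
\[
\fract\lA\nabla f(t)\rA_{L^\infty}\lesssim \lA f(t)\rA_{\dot H^{5/2}}^2+\text{(small error terms from the regularization)},
\]
so that $\lA\nabla f(t)\rA_{L^\infty}-\lA\nabla f_0\rA_{L^\infty}\lesssim\int_0^t\lA f\rA_{\dot H^{5/2}}^2\,\ds\lesssim E(0)\le\delta^2$ uniformly in time. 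With that correction your bootstrap closes exactly as you intend.

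Two smaller remarks. First, the correct coercivity estimate (Lemma~\ref{lem1} applied through Theorem~\ref{mainthm0}) gives a right-hand side of the form $A^{1/2}(1+A)B$ rather than a pure $E^{1/2}D$; since $A$ stays small this does not affect the conclusion, but it is worth keeping track of the extra factor $(1+A)$ when choosing $\delta$. Second, the paper does not apply the a priori estimate directly to the Muskat flow but to the $\varepsilon$-regularized approximate system~\eqref{n2}, deriving estimates that hold uniformly on $t\le|\log\varepsilon|$ with errors $O(\varepsilon^{\beta_0})$ and then letting $\varepsilon\to 0$; your write-up elides this, which is fine for a high-level sketch but should be mentioned if the argument is to be made rigorous, since the a priori estimates are only justified on smooth solutions furnished by that scheme.
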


As another consequence of the proof of Theorem~\ref{main}, 
we will obtain  the existence of solutions whose initial slope is infinite, which extends the 
main result in~\cite{AN1}
to the 3D Muskat problem.

\begin{theorem}\label{theo:main3}
Let $a=3/8$. 

$i)$ For any 
initial data $f_0$ in the following space
$$
\dot{H}^{2,\log^a}(\xR^2)
\defn\Big\{ u:\,\lA u\rA_{\dot{H}^{2,\log^a}}=\lA u\rA_{L^\infty}+ \blA |\xi|^2\big(\log(2+\la \xi\ra)\big)^{a}
 \hat{u}(\xi)\brA_{L^2}<+\infty\Big\},
$$

there exists a positive time $T$ such that the Cauchy problem for the Muskat 
equation~\e{Muskat} with initial data 
$f_0$ has 
a unique solution
$$
f\in C^0\big([0,T];\dot{H}^{2,\log^a}(\xR^2)\big)
\cap L^2\big(0,T;\dot{H}^{2}(\xR^2)\big).
$$ 
$ii)$ There exists a positive constant $c_0$ such that, if $f_0\in L^\infty(\mathbb{R}^2)$ and
$$
\int_{\xR^2} |\xi|^4\big(\log(2+\la \xi\ra)\big)^{2a}
\bla\hat{f}_0(\xi)\bra^2\dxi\le c_0,
$$
then the solution exists globally in time.
\end{theorem}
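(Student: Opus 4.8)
The plan is to deduce Theorem~\ref{theo:main3} as a corollary of the machinery built for Theorem~\ref{main}, by running the same \emph{a priori} estimates in the slightly larger space $\dot H^{2,\log^a}$ obtained by inserting the Fourier multiplier $m(\xi)=\bigl(\log(2+|\xi|)\bigr)^a$ into the energy. Concretely, I would let $M$ be the Fourier multiplier with symbol $|\xi|^2 m(\xi)$ and test the quasilinear reformulation of \eqref{Muskat} — the decomposition established earlier in the paper that lets one commute Fourier multipliers through the equation — against $Mf$ in $L^2$. The point of the new decomposition is precisely that $[M,\text{nonlinearity}]$ is lower order; so one gets
\begin{equation*}
\frac12\fract \lA f\rA_{\dot H^{2,\log^a}}^2 + \kappa(\lA\nabla f\rA_{L^\infty})\lA f\rA_{\dot H^{5/2,\log^a}}^2 \le \mathcal C\bigl(\lA\nabla f\rA_{L^\infty}\bigr)\lA f\rA_{\dot H^{2,\log^a}}^2 \lA f\rA_{\dot H^{5/2}}\cdots,
\end{equation*}
together with the parallel estimate controlling $\lA\nabla f\rA_{L^\infty}$ (using that $\dot H^{2,\log^a}\hookrightarrow \dot W^{1,\infty}$ is \emph{false} at the endpoint, which is the whole difficulty — see below). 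The gain of $1/2$ derivative in the parabolic term, integrated in time, should absorb the right-hand side and close the bootstrap on a short interval $[0,T]$ with $T$ depending on the profile of $f_0$ (not merely its norm), exactly as in the $\dot H^2$ case; part $ii)$ follows by taking the data small enough that the dissipation dominates globally.

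The key steps, in order, would be: (1) record the commutator estimates for $M$ acting on each term of the new nonlinearity decomposition, checking that the logarithmic weight $m(\xi)$ — which is slowly varying, $m(2\xi)\simeq m(\xi)$ — produces only harmless factors of $\log$ that are beaten by the $|\xi|^{1/2}$ parabolic gain; (2) prove the Lipschitz estimate, i.e.\ close the bound on $\lA \nabla f(t)\rA_{L^\infty}$ by Duhamel against the heat-like semigroup generated by the leading parabolic operator, using the $L^2(0,T;\dot H^{5/2})$ control to handle the nonlinearity; (3) combine (1)–(2) into an \emph{a priori} estimate and run the contraction/compactness argument to get existence and uniqueness in $C^0([0,T];\dot H^{2,\log^a})\cap L^2(0,T;\dot H^2)$; (4) for $ii)$, observe that under the smallness hypothesis the nonlinear terms are dominated by the dissipation uniformly in time, giving a global bound. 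Steps (3) and (4) are essentially verbatim repetitions of the proof of Theorems~\ref{main} and~\ref{main-2} once (1) and (2) are in hand.

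The main obstacle is step (2): the embedding $\dot H^{2,\log^a}(\xR^2)\hookrightarrow \dot W^{1,\infty}(\xR^2)$ \emph{fails} — this is exactly why the data need not be Lipschitz — so one cannot bound $\lA\nabla f\rA_{L^\infty}$ by the energy norm and must instead propagate it as an independent quantity. The resolution, as in~\cite{AN1}, is that although $\nabla f_0$ need not be bounded, the \emph{dissipation} instantaneously regularizes: for $t>0$ one has $f(t)\in \dot H^{5/2}\hookrightarrow \dot W^{1,\infty}$, with a bound like $\lA\nabla f(t)\rA_{L^\infty}\les t^{-1/4}\lA f_0\rA_{\dot H^{2,\log^a}}\cdot(\text{slowly growing})$, and the borderline logarithmic weight is precisely what makes $\int_0^T t^{-1/2}(\log\frac1t)^{\cdots}\,\dt$ converge so that $\nabla f\in L^1(0,T;L^\infty)$; one then runs a continuity argument on $N(t)\defn \sup_{[0,t]}\lA\nabla f\rA_{L^\infty}$. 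Getting the powers of the logarithm to match — the choice $a=3/8$ should be dictated by balancing the $\log$ loss in the commutators of step (1) against the $\log$ gain available from the $t^{-1/4}$ smoothing, since $3/8 = \tfrac12\cdot\tfrac34$ and $\tfrac34$ is the parabolic gain exponent in the relevant norm — is the delicate bookkeeping that the proof must carry out carefully.
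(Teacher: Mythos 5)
Your overall scaffold — test the quasilinearized equation against a log-weighted multiplier, commute via the decomposition of Section~\ref{S:3}, propagate the slope separately, and bootstrap — matches the paper. But the mechanism you propose in step~(2) to control $\lA\nabla f\rA_{L^\infty}$ is not the one the paper uses and, as stated, would not close. The paper does \emph{not} rely on a time-singular parabolic smoothing bound $\lA\nabla f(t)\rA_{L^\infty}\les t^{-1/4}\cdots$; that route is problematic precisely because the coercivity of the energy inequality in Theorem~\ref{mainthm0} carries the factor $(1+\lA\nabla f\rA_{L^\infty}^3)^{-1}$, so a blow-up of the slope as $t\to 0$ would degenerate the dissipation exactly where you need it, and you would also have to justify such a smoothing estimate for a quasilinear equation whose ellipticity constant depends on the unknown itself. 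Instead the paper uses a \emph{Brezis--Gallouet-type interpolation in frequency}, Lemma~\ref{L:3.5}~(ii): $\lA\nabla f\rA_{L^\infty}\les 1+\lA f\rA_{L^\infty}+A_\phi\,\big(\log(2+B_\phi)\big)^{(1-2a)/2}$, valid whenever $\kappa(r)\ge\log(4+r)^a$. This is a pointwise-in-time bound in terms of the a priori energy quantities $A_\phi,B_\phi$ alone, not an integrated smoothing estimate, and it is what replaces the hypothesis $\nabla f_0\in L^\infty$ of Theorem~\ref{main} (where instead Proposition~5.9~(i) propagates $\lA\nabla f\rA_{L^\infty}$ by integrating $B_\phi$ in time).

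Your explanation of the exponent $a=3/8$ is consequently also off. It is not $a=\tfrac12\cdot\tfrac34$ from a parabolic-gain bookkeeping. The true balance comes from cubing the slope in the coercive denominator: substituting the interpolation bound into $(1+\lA\nabla f\rA_{L^\infty}^3)^{-1}$ produces a loss of $\big(\log(2+B_\phi)\big)^{3(1-2a)/2}$, which must be matched against the gain $\big(\phi(B_\phi/A_\phi)\big)^{-1}\sim\big(\log(2+B_\phi/A_\phi)\big)^{-a}$ in the nonlinear right-hand side of~\eqref{Z5}. Equating exponents gives $a=\tfrac{3(1-2a)}{2}$, i.e.\ $8a=3$, i.e.\ $a=3/8$. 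Once that lemma is supplied, your steps~(1), (3), and~(4) do run essentially as you describe, along the lines of the proofs of Theorems~\ref{main} and~\ref{main-2}.
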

\subsection{Strategy of the proof and plan of the paper}

The proof is based on two different tools. On the one hand, 
we shall use the weighted fractional laplacians already used in 
our previous works~\cite{AN1,AN2,AN3}. We recall the needed results in 
Section~\ref{S:2}. 
On the other hand, we will introduce a decomposition of the nonlinearity 
into several terms playing different roles in Section~\ref{S:3}. 
On the technical side, we simplify many arguments compared to previous works 
on the subject by introducing an approach which is best carried out by the 
Fourier transform (instead of using Besov spaces or Triebel spaces).

\subsection{Notations}
Let us fix some basic notations.
\begin{enumerate}
\item The euclidean norm of $h=(h_1,h_2)\in \xR^2$ is 
denoted by $\la h\ra=\sqrt{h_1^2+h_2^2}$.
\item Given a scalar $a\in \xR$ or a vector 
$h\in \xR^2$, the Bracket notations reads:
$$
\langle a\rangle=\sqrt{1+a^2}, \quad \langle h\rangle =\sqrt{1+|h|^2}=\sqrt{1+h_1^2+h_2^2}.
$$
\item Given a non-zero vector $a$ in $\xR^2$, we set
$$
\check{a}=\frac{a}{\la a\ra}\cdot
$$
\item The operators $\delta_\a$ and $\Delta_\a$ are defined by
\begin{equation*}
\delta_\a f(x)=f(x)-f(x-\a),\quad \Delta_{\alpha}f(x)=\frac{f(x)-f(x-\alpha)}{|\alpha|}\cdot
\end{equation*}

\item If $A, B$ are nonnegative quantities, the inequality 
$A \lesssim B$ means that 
$A\le CB$ for some constant $C$ depending only on fixed quantities, and $A\sim B$ means that $A\les B\les A$.

\item Given two operators $A$ and $B$, the commutator $[A,B]$ is the difference 
$A\circ B-B\circ A$ .
\item Given a normed space~$X$
and a function~$\varphi=\varphi(t,x)$ defined on~$[0,T]\times \xR$ with values in~$X$,~$\varphi(t)$ the function~$x\mapsto
\varphi(t,x)$. In the same vein, we use $\lA \varphi\rA_X$ as 
a compact notation for the time dependent function 
$t\mapsto \lA \varphi(t)\rA_X$. 
\end{enumerate}

\section{Preliminaries}\label{S:2}

We gather in this section some inequalities that are used systematically in the sequel. 
We begin by summarizing some well-known estimates. 
Then we recall from our previous papers several estimates for the weighted 
fractional Laplacians $\D^{s,\phi}$. In the last paragraph, we introduce 
some estimates which allow to considerably simplify previous analysis of the Muskat equation. 

\subsection{Minkowski's inequality}
Consider two $\sigma$-finite measure spaces $(S_1,\mu_1)$ and $(S_2,\mu_2)$ 
and a measurable function $F\colon S_1\times S_2\to\xR$. 
Then, for all $p\in[1,+\infty)$,
\be\label{Min}
\bigg(\underset{S_2}{\int} 
\bigg\vert\underset{S_1}{\int} F(x,y)\mu_1(\dx)\bigg\vert^p 
\mu_2(\dy)\bigg)^{\frac{1}{p}}\le 
\bigg(\underset{S_1}{\int} 
\bigg(\underset{S_2}{\int} \la F(x,y)\ra^p \mu_2(\dy)\bigg)
^{\frac{1}{p}}
\mu_1(\dx).
\ee

\subsection{Sobolev embeddings}\label{S:2.2.a}
We will make extensive use of the Sobolev and Hardy-Littlewood-Sobolev inequalities. 
Recall that, in dimension two, Sobolev's inequality reads
\be\label{Sobolev}
\dot{H}^t(\xR^2)\hookrightarrow L^{\frac{2}{1-t}}(\xR^2) \quad\text{for}\quad 
0\le t<1.
\ee
Also, for for $s\in (0,1)$, there is a constant $c(s)$ such that
\be\label{Gagliardo}
\lA u\rA_{\dot H^{s}}^2=c(s) \underset{\xR^2\times \xR^2}{\iint}
\frac{\la u(x)-u(y)\ra^2}{\la x-y\ra^{2s}}\frac{\dx\dy}{\la x-y\ra^2}\cdot
\ee

In space dimension two, the Riesz potentials are defined by
$$
\mathbf{I}_s f(x)=\int_{\xR^2}\frac{f(y)}{|x-y|^{2-s}}\dy\quad\text{for } 0<s<2.
$$
Consider three positive numbers $(p,q,s)$ such that
$$
\frac{1}{p}-\frac{1}{q}=\frac{s}{2},\quad 1<p<\frac{2}{s}\cdot
$$
Then the Hardy-Littlewood-Sobolev inequality states that 
there exists a constant $C=C(p,q,s)$ such that, for all $f$ in 
$C^1_0(\xR^2)$,
\be\label{Riesz}
\lA \mathbf{I}_s f\rA_{L^q}\leqslant C\lA f\rA_{L^p}.
\ee
We will also make extensive use of the fact 
that, for $0<s<1$, the Fourier multiplier $\D^{-s}$ can be written as 
\be\label{b1}
\D^{-s}=c_{s}\mathbf{I}_{s},
\ee
for some constant $c_{s}$.

\subsection{Weighted fractional Laplacians}\label{S:2.3}

In our previous works~\cite{AN1,AN3} we introduced weighted fractional Laplacians and use them to study the 
Muskat equation in critical spaces. In this section, we extend the previous operators in dimension $d=2$. 

\begin{notation}
Consider $s\in [0,+\infty)$ and 
a function $\phi\colon [0,+\infty)\to [1,\infty)$. 
By definition, the weighted fractional Laplacian $\D^{s,\phi}$ denotes 
the Fourier multiplier with symbol $|\xi|^s\phi(\la\xi\ra)$, such that
\begin{equation*}
\mathcal{F}( \D^{s,\phi}f)(\xi)=|\xi|^s\phi(\la\xi\ra) \mathcal{F}(f)(\xi).
\end{equation*}
In addition, we define the space
$$
{H}^{s,\phi}(\xR)=\{ f\in L^2(\xR)\, :\,\D^{s,\phi}f\in L^2(\xR)\},
$$
equipped with the norm
$$
\lA f\rA_{{H}^{s,\phi}}\defn \lA f\rA_{L^2}
+\left(\int_\xR \la \xi\ra^{2s}(\phi(\la\xi\ra))^2\bla\hat{f}(\xi)\bra^2\dxi\right)^\mez.
$$
\end{notation}

\begin{remark}
There is a key difference between the space ${H}^{1,\phi}(\xR^2)$ 
and the space $H^{1+\eps}(\xR^2)$ with $\eps>0$. Indeed, the 
space ${H}^{1,\phi}(\xR^d)$ is not stable by 
product in general. 
\end{remark}
\begin{remark}
In the special case where $\phi(r)=\log(2+r)^a$, these operators were 
introduced and studied in~\cite{BN18a,BN18b,BN18d} for $s\in [0,1)$ (see also \cite{Ng}). 
\end{remark}

We shall consider special functions $\phi$ depending 
on a function $\kappa\colon [0,\infty)\to [1,\infty)$, of the 
form\footnote{The fact that this integral is well-defined follows at once from the assumptions on $\kappa$ below. 
The reason for this special choice is to obtain the identity~\e{dsphi} which connects $\D^{3/2,\phi}$ to an expression involving finite differences.} 
\begin{equation}\label{n10}
\phi(\lambda)=4\pi\int_0^\infty \frac{1-\cos(r)}{r^{3/2}}\kappa\left(\frac{\lam}{r}\right)\frac{\dr}{r}\cdot
\end{equation}
In addition we will always assume that $\kappa$ is an admissible weight, in the sense of the following definition.

\begin{definition}\label{defi:admissible}
An admissible weight is a function $\kappa\colon[0,\infty) \to [1,\infty)$ 
satisfying the following three conditions: 
\begin{enumerate}[$({{\rm H}}1)$]
\item\label{H1} $\kappa$ is increasing;
\item\label{H2} there exists a positive constant $c_0$ such that $\kappa(2r)\leq c_0\kappa(r)$ for any $r\geq 0$; 
\item\label{H3} the function $r\mapsto \kappa(r)/\log(4+r)$ is decreasing on  $[0,\infty)$.
\end{enumerate}
\end{definition}

The next propositions contains the main results about these operators. 
Their proofs are postponed to the appendix.

For later applications, we make some preliminary remarks about admissible weights.
\begin{lemma}\label{L:2.6}
For all $\sigma>0$, there exists $C_\sigma>0$ such that, for all  
$0< r\le \mu$,
\begin{align}
&r^\sigma\kappa\left(\frac{1}{r}\right) \le C_\sigma\,\mu^\sigma
\kappa\left(\frac{1}{\mu}\right),\label{est:kappa}\\
&r^\sigma\kappa^2\left(\frac{1}{r}\right) \le C_\sigma\,\mu^\sigma
\kappa^2\left(\frac{1}{\mu}\right).\label{est:kappatwo}
\end{align}
\end{lemma}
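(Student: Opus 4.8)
The plan is to prove Lemma~\ref{L:2.6} by reducing the two inequalities to a single monotonicity statement about the function $g(r)\defn r^\sigma\kappa(1/r)$, and then to establish the required almost-monotonicity from the three structural hypotheses (H1)--(H3) in Definition~\ref{defi:admissible}. Note first that \eqref{est:kappatwo} follows from \eqref{est:kappa} applied with $\kappa^2$ in place of $\kappa$, provided one checks that $\kappa^2$ is again (comparable to) an admissible weight: $(\mathrm{H}1)$ and $(\mathrm{H}2)$ are immediate for $\kappa^2$ (with doubling constant $c_0^2$), and although $\kappa^2/\log(4+r)$ need not be decreasing, one has $\kappa^2(r)/\log(4+r)^2 = (\kappa(r)/\log(4+r))^2$ decreasing, and the extra logarithmic factor is harmless for the scaling estimate \eqref{est:kappa} (which only loses a constant $C_\sigma$). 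Alternatively, and more cleanly, I would simply run the proof of \eqref{est:kappa} once in a way that is insensitive to replacing $\kappa$ by $\kappa^2$, so that the same argument yields both displays simultaneously.

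So the core is \eqref{est:kappa}: for $0<r\le\mu$, show $r^\sigma\kappa(1/r)\le C_\sigma\,\mu^\sigma\kappa(1/\mu)$. Write $\mu=2^k r$ with a real $k\ge 0$; it suffices to treat the case $\mu = 2^k r$ for a positive integer $k$ (the fractional part costs a factor $2^\sigma$) and then iterate, so it is enough to prove the one-step estimate
\begin{equation*}
r^\sigma\kappa\Big(\frac1r\Big)\le C_\sigma' \,(2r)^\sigma\kappa\Big(\frac{1}{2r}\Big),
\end{equation*}
with a constant $C_\sigma'<2^\sigma$ strictly, so that iterating $k$ times gives a geometric bound $(C_\sigma')^k (2^k r)^{-\sigma}\cdot$ wait — rather, iterating gives $r^\sigma\kappa(1/r)\le (C_\sigma'/2^\sigma)^k (2^k r)^\sigma \kappa(1/(2^k r))$, and since $C_\sigma'/2^\sigma<1$ the factor $(C_\sigma'/2^\sigma)^k$ stays bounded by $1$; here is where $(\mathrm{H}3)$ enters. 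Indeed, by $(\mathrm{H}3)$, with $s=1/r$,
\begin{equation*}
\frac{\kappa(1/r)}{\kappa(1/(2r))}\le \frac{\log(4+1/r)}{\log(4+1/(2r))}\le 2,
\end{equation*}
and more precisely this ratio is $\le 1+\log 2/\log(4+1/(2r))\le 1+\log2/\log 4 = 3/2$ when $r\le 1/2$, say. Thus $r^\sigma\kappa(1/r)\le (3/2)\, r^\sigma\kappa(1/(2r)) = (3/2)2^{-\sigma}(2r)^\sigma\kappa(1/(2r))$. Choosing the number of halving steps so that we only halve while $(3/2)2^{-\sigma}<1$—which holds for all $\sigma$ once we absorb finitely many initial steps into $C_\sigma$—the geometric product converges. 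A cleaner packaging: since $(\mathrm{H}3)$ gives $\kappa(1/r)\le \kappa(1/\mu)\,\frac{\log(4+1/r)}{\log(4+1/\mu)}$ for $0<r\le\mu$, it remains to check $r^\sigma\log(4+1/r)\le C_\sigma\,\mu^\sigma\log(4+1/\mu)$, i.e. that $t\mapsto t^\sigma\log(4+1/t)$ is almost-increasing on $(0,\infty)$; this is an elementary one-variable calculus fact (it is the product of the increasing $t^\sigma$ with the decreasing $\log(4+1/t)$, and one checks the derivative is positive for $t$ small and the function is bounded below on any compact set away from $0$, so it is comparable to an increasing function). That reduces everything to a scalar inequality.

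The main obstacle, such as it is, is purely bookkeeping: making sure the constant $C_\sigma$ genuinely depends only on $\sigma$ (and the fixed doubling constant $c_0$ from $(\mathrm{H}2)$, which we may regard as universal), and handling the regime where $1/r$ and $1/\mu$ are both small — there $\log(4+1/r)\sim\log(4+1/\mu)\sim\log 4$ are comparable constants and the inequality is just $r^\sigma\le C_\sigma\mu^\sigma$, which is trivial. I expect no genuine difficulty: $(\mathrm{H}1)$ is not even needed for \eqref{est:kappa} in the form above (it is used elsewhere), $(\mathrm{H}2)$ is only needed if one prefers to argue via doubling rather than via $(\mathrm{H}3)$, and $(\mathrm{H}3)$ does all the work by controlling the growth of $\kappa$ by a logarithm. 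I would therefore present the proof in two short steps: (1) the scalar fact that $t\mapsto t^\sigma\log(4+1/t)$ is almost-increasing, proved by elementary calculus; (2) the chain $r^\sigma\kappa(1/r)\le r^\sigma\log(4+1/r)\cdot\frac{\kappa(1/\mu)}{\log(4+1/\mu)}\le C_\sigma\mu^\sigma\log(4+1/\mu)\cdot\frac{\kappa(1/\mu)}{\log(4+1/\mu)} = C_\sigma\mu^\sigma\kappa(1/\mu)$, using $(\mathrm{H}3)$ in the first inequality and step (1) in the second; then deduce \eqref{est:kappatwo} by the same argument with $\kappa^2$, noting $\kappa^2(r)/\log(4+r)^2$ is decreasing and the proof only used such a bound up to the harmless replacement of $\log$ by $\log^2$.
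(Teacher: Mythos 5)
The paper states Lemma~\ref{L:2.6} without proof, so there is no author argument to compare against; what can be judged is only whether your argument is correct, and the final ``cleaner packaging'' is. Condition~(H3) gives, for $0<r\le\mu$ (equivalently $1/\mu\le 1/r$),
$$
\kappa\Big(\frac1r\Big)\le\kappa\Big(\frac1\mu\Big)\,\frac{\log(4+1/r)}{\log(4+1/\mu)},
$$
and it then suffices to check that $g(t)=t^{\sigma}\log(4+1/t)$ is comparable to an increasing function on $(0,\infty)$. This is indeed elementary: $g'(t)=t^{\sigma-1}\big[\sigma\log(4+1/t)-\tfrac{1}{4t+1}\big]$, the bracket is $+\infty$ as $t\to0$ and tends to $\sigma\log 4>0$ as $t\to\infty$, and a short computation of its critical point shows the set where it is negative is a (possibly empty) compact subinterval $[a,b]\subset(0,\infty)$, on which $g$ is bounded between the two positive numbers $g(b)$ and $g(a)$; hence $g(r)\le (g(a)/g(b))\,g(\mu)$ for all $0<r\le\mu$. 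Your observation that only (H3) is used is correct. For \eqref{est:kappatwo} the same chain applies since $\kappa^{2}/\log^{2}$ is decreasing, with the almost-monotonicity of $t\mapsto t^{\sigma}\log^{2}(4+1/t)$ obtained by squaring the $\log$-case at exponent~$\sigma/2$.

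One caveat about the intermediate doubling sketch, which you wisely abandon: the phrase ``absorb finitely many initial steps into $C_\sigma$'' does not repair the iteration when $(3/2)2^{-\sigma}\ge1$, i.e.\ $\sigma\le\log_2(3/2)$, because that one-step constant is not tied to an initial segment of steps. What actually saves the doubling approach for small $\sigma$ is that the step-$j$ ratio $\kappa(2^{j}\cdot)/\kappa(2^{j-1}\cdot)\le 1+\log2/\log(4+2^{j-1}\cdot)$ tends to $1$ as the frequency grows, so the accumulated product behaves like $\log(4+1/r)/\log(4+1/\mu)$ rather than like $(3/2)^{k}$; this is precisely the content of (H3) made quantitative, and it is what your direct argument captures without iteration. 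So the proof you actually submit is complete; the iteration paragraph should either be dropped or corrected along these lines if you wish to keep it as an alternative.
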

\begin{remark}
These inequalities have the following interpretation: even if the function $r\to \kappa(1/r)$ and $r\to \kappa^2(1/r)$ are 
decreasing, since the function $\kappa(r)/\log(2+r)$ is decreasing, one expects that 
$r^\sigma \kappa(1/r)$ and $r^\sigma \kappa^2(1/r)$ 
behave as increasing functions of $r$.
\end{remark}

We will see that  $\kappa$ and $\phi$ are equivalent (i.e. $\phi\sim \kappa$, see~\e{n60b}). 
The reason for introducing two different functions to code a single operator is that we will use them for different purposes. 
Indeed, it is convenient to use $\phi$ when we prefer to work with the 
frequency variable, whereas we will use $\kappa$ when 
the physical variable is more practical. The special choice~\e{n10} for the 
formula relating $\phi$ and $\kappa$ will be used to deduce the following identity~\e{dsphi} 
which allows to switch calculations between 
frequency and physical variables.

\begin{proposition}\label{Z9}
Assume that $\phi$ is defined by~\e{n10} for some admissible weight 
$\kappa$.  For all 
$g\in H^\infty(\xR^2)$, there holds
\begin{equation}\label{dsphi}
\D^{\tdm,\phi}g(x)=c\int_{\xR^2}\frac{2g(x)-g(x+\alpha)-g(x-\alpha)}{\la \alpha\ra^{3/2}}
\kappa\left(\frac{1}{|\alpha|}\right)\frac{\dalpha}{|\a|^2}\cdot
\end{equation}
\end{proposition}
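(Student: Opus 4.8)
The plan is to verify the identity by computing the Fourier transform of the right-hand side and checking that it equals $|\xi|^{3/2}\phi(\langle\xi\rangle)\hat g(\xi)$. Write the right-hand side as $c\int_{\xR^2} K(\alpha)\,\big(2g(x)-g(x+\alpha)-g(x-\alpha)\big)\,\dalpha$, where $K(\alpha)=\langle\alpha\rangle^{-3/2}\kappa(1/|\alpha|)|\alpha|^{-2}$. Since $\mathcal F\big(x\mapsto 2g(x)-g(x+\alpha)-g(x-\alpha)\big)(\xi) = \big(2-e^{i\alpha\cdot\xi}-e^{-i\alpha\cdot\xi}\big)\hat g(\xi) = 2\big(1-\cos(\alpha\cdot\xi)\big)\hat g(\xi)$, Fubini (justified because $K(\alpha)(1-\cos(\alpha\cdot\xi))$ is integrable near $\alpha=0$, using $1-\cos(\alpha\cdot\xi)\lesssim |\alpha|^2|\xi|^2$, and at infinity because $\kappa$ grows at most logarithmically by (H3) so $K$ has an integrable tail) gives that the multiplier is
\begin{equation*}
m(\xi)=2c\int_{\xR^2}\frac{1-\cos(\alpha\cdot\xi)}{\langle\alpha\rangle^{3/2}}\,\kappa\!\left(\frac{1}{|\alpha|}\right)\frac{\dalpha}{|\alpha|^2}\cdot
\end{equation*}
It then remains to show $m(\xi)=|\xi|^{3/2}\phi(\langle\xi\rangle)$ for an appropriate constant $c$.

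Next I would exploit the rotational symmetry: $m(\xi)$ depends only on $|\xi|$ (replace $\alpha$ by a rotation), so fix $\xi$ and pass to polar coordinates $\alpha=\rho\theta$ with $\theta\in\xS^1$, $\dalpha = \rho\,\dr\!\rho\,\diff\theta$ — here the $|\alpha|^{-2}=\rho^{-2}$ and the Jacobian combine to $\rho^{-1}\dr\!\rho\,\diff\theta$. The key point is the homogeneity structure: $\kappa(1/|\alpha|)=\kappa(1/\rho)$, and I want to produce the scaling that turns this into the defining integral \eqref{n10} for $\phi$. Substituting $\rho = \langle\alpha\rangle^{?}\cdots$ is awkward because of the bracket $\langle\alpha\rangle$ rather than $|\alpha|$; the natural move is instead to change variables $r = \rho|\xi|$ inside the $\rho$-integral (for fixed $\theta$, after writing $\cos(\alpha\cdot\xi)=\cos(\rho|\xi|(\check\theta\cdot\check\xi))$ — but the angular integral couples $\theta$ and $\xi$). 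A cleaner route: first do the angular average $\frac{1}{2\pi}\int_{\xS^1}(1-\cos(\rho|\xi|\,\omega_1))\,\diff\theta$ where $\omega_1=\theta\cdot(\xi/|\xi|)$; this is $1-J_0(\rho|\xi|)$ up to constants, a radial function of $\rho|\xi|$. Then $m(\xi)=C\int_0^\infty \big(1-J_0(\rho|\xi|)\big)\langle\rho\rangle^{-3/2}\kappa(1/\rho)\,\dr\!\rho/\rho$; but $\langle\rho\rangle$ versus $\rho$ still blocks a clean scaling.

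I suspect the honest computation uses the substitution adapted to \eqref{n10} directly: set $\lambda=\langle\xi\rangle$ in \eqref{n10}, so $\phi(\langle\xi\rangle)=4\pi\int_0^\infty\frac{1-\cos r}{r^{3/2}}\kappa\big(\tfrac{\langle\xi\rangle}{r}\big)\frac{\dr}{r}$, and then in $m(\xi)$ substitute $r=\rho$ (not $\rho|\xi|$), recognizing $\kappa(1/\rho)=\kappa\big(\tfrac{\langle\xi\rangle}{\rho\langle\xi\rangle}\big)$ — this does not match unless one also rescales $\rho$ by $\langle\xi\rangle$, i.e. $\rho = \langle\xi\rangle^{-1}\tilde\rho$, under which $1-\cos(\alpha\cdot\xi)$ is \emph{not} a function of $\tilde\rho$ alone. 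So the matching of \eqref{n10} with $m(\xi)$ is not a literal change of variables; rather, the definition \eqref{n10} must have been \emph{reverse-engineered} so that the 2D radial Fourier computation of $\int K(\alpha)2(1-\cos\alpha\cdot\xi)\dalpha$ produces exactly $|\xi|^{3/2}$ times that integral. Concretely I would: (a) compute $\int_{\xR^2}(1-\cos(\alpha\cdot\xi))|\alpha|^{-2-3/2}\dalpha = c_1|\xi|^{3/2}$ by pure scaling (this is the $\langle\alpha\rangle\to|\alpha|$ homogeneous model and is classical); (b) observe that replacing $|\alpha|^{-3/2}$ by $\langle\alpha\rangle^{-3/2}\kappa(1/|\alpha|)$ amounts, after the radial substitution $r=\rho|\xi|$ in the model computation, to inserting the factor $\frac{|\alpha|^{3/2}}{\langle\alpha\rangle^{3/2}}\kappa(1/|\alpha|)$ evaluated at $|\alpha|=r/|\xi|$; (c) check this reproduces precisely the integrand of $4\pi\int_0^\infty\frac{1-\cos r}{r^{3/2}}\kappa(|\xi|/r)\tfrac{\dr}{r}$ up to the discrepancy $\langle\alpha\rangle$ vs.\ $|\alpha|$ and $\phi$ evaluated at $\langle\xi\rangle$ vs.\ $|\xi|$ — and these two discrepancies are designed to cancel, which is exactly the content of the footnote to \eqref{n10}. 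The main obstacle is therefore this bookkeeping: tracking the $\langle\cdot\rangle$-vs-$|\cdot|$ replacements and the angular ($J_0$/Bessel) constant, and confirming that the specific weight $\langle\alpha\rangle^{-3/2}\kappa(1/|\alpha|)$ on the physical side corresponds under the 2D radial Fourier transform to $|\xi|^{3/2}\phi(\langle\xi\rangle)$ with $\phi$ given by \eqref{n10} — i.e.\ verifying that \eqref{n10} is the correct inverse relation. Everything else (Fubini, convergence of the integrals using (H1)--(H3) and Lemma~\ref{L:2.6}, the reduction to a radial multiplier) is routine.
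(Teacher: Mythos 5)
There is a genuine gap, and it stems in the first place from a misreading of the paper's notation. In this source, the macros \texttt{\textbackslash la} and \texttt{\textbackslash ra} expand to $\left\vert\cdot\right\vert$ (absolute value), not to $\langle\cdot\rangle$; the Japanese bracket is written explicitly as \texttt{\textbackslash langle\ldots\textbackslash rangle} everywhere it is meant. So the kernel in \eqref{dsphi} is $|\alpha|^{-3/2}\kappa(1/|\alpha|)\,|\alpha|^{-2}$, and the multiplier to compute is $|\xi|^{3/2}\phi(|\xi|)$ — not $\phi(\sqrt{1+|\xi|^2})$. The ``$\langle\cdot\rangle$-vs-$|\cdot|$ bookkeeping'' and the ``these two discrepancies are designed to cancel'' heuristic that occupy most of your proposal are therefore phantom issues: there is nothing to cancel. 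Once the kernel is read correctly it is homogeneous of degree $-7/2$ up to the slowly varying factor $\kappa(1/|\alpha|)$, and the radial change of variables $\rho\mapsto r/|\xi|$ (done per angle, i.e.\ $r=\rho\,|\xi||\cos\psi|$) goes through cleanly and directly produces a formula of type~\eqref{n10}. This is also the approach the paper takes for the adjacent Proposition~\ref{P:3.5} in Appendix~\ref{A:2}: Plancherel, reduce to a radial multiplier, polar coordinates plus the scaling substitution.

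Even with the notation corrected, the proposal does not actually prove the identity: the core of the argument is left as ``I suspect,'' ``I would,'' ``the main obstacle is therefore this bookkeeping.'' The one real subtlety you did glimpse — that in dimension two the angular integral couples $\theta$ and $\xi$, so the radial average produces (your $J_0$ observation) a quantity of the form $|\xi|^{3/2}\int_{\xS^1}|\cos\psi|^{3/2}\phi(|\xi||\cos\psi|)\,\diff\psi$ rather than $|\xi|^{3/2}\phi(|\xi|)$ on the nose — is precisely the step that needs to be addressed and is not. Either one must observe that, in $d=2$, the formula~\eqref{n10} should be read as the radial average just described (so that the finite-difference representation is the \emph{definition} of $\phi$ and the one-dimensional integral~\eqref{n10} is a comparable surrogate, equivalent to it by the doubling property $(\mathrm{H}2)$ and Lemma~\ref{L:2.6}), or one must carry out the angular average explicitly. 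Your plan (a)--(c) points in this direction but stops short of carrying it out, and the verification that the $\alpha$-side weight and the $\xi$-side symbol are Fourier duals — which is the whole content of the proposition — is never done.
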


Eventually, we will need  the following link between $\D^{s,\phi}$ and the 
function $\kappa$.

\begin{proposition}\label{P:3.5}
$i)$ Assume that $\phi$ is defined by~\e{n10} for some admissible weight 
$\kappa$. Define, for $g\in H^\infty(\xR^2)$, the semi-norm
$$
\lA g\rA_{s,\kappa}:=\left(\iint_{\xR^2\times\xR^2}
\la 2g(x)-g(x+\alpha)-g(x-\alpha)\ra^2
\left(\frac{1}{|\alpha|^{s}}\kappa\left(\frac{1}{|\alpha|}\right)\right)^2\frac{\dx \dalpha}{|\alpha|^2}\right)^\mez\cdot
$$
Then, for all $0< s<2$, there exist $c,C>0$ such that, for 
all $g\in H^\infty(\xR^2)$,
\begin{equation*}
c\int_{\xR}  \bla \D^{s,\phi}g(x)\bra^2\dx\le \lA g\rA_{s,\kappa}^2
\le C\int_{\xR}\bla \D^{s,\phi}g(x)\bra^2\dx.
\end{equation*}

$ii)$ There exist two constants $c,C>0$ such that, for all $\xi\in \xR^2$,
\be\label{n60b}
c\kappa(|\xi|)\le \phi(|\xi|)\le C \kappa(|\xi|).
\ee

\end{proposition}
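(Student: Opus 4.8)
The plan is to prove Proposition~\ref{P:3.5} by working entirely on the Fourier side, reducing both statements to elementary estimates on the scalar symbol $\phi$ defined by~\e{n10}, and then invoking the admissibility hypotheses $({\rm H}1)$--$({\rm H}3)$ together with Lemma~\ref{L:2.6}. First I would address part $ii)$, the comparison $\phi\sim\kappa$, since it is the technical heart and part $i)$ will follow from it combined with Proposition~\ref{Z9}. Write $\phi(\lambda)=4\pi\int_0^\infty \frac{1-\cos r}{r^{3/2}}\kappa(\lambda/r)\frac{\dr}{r}$ and split the integral at $r=\lambda$ (say). On the region $r\ge\lambda$ one has $\lambda/r\le 1$, and since $\kappa\ge 1$ is increasing, $1\le\kappa(\lambda/r)\le\kappa(1)$, so that piece contributes a bounded multiple of a convergent constant $\int_0^\infty\frac{1-\cos r}{r^{5/2}}\dr$, hence is $\les 1\les\kappa(|\xi|)$. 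On the region $0<r\le\lambda$, substitute $u=\lambda/r\ge 1$; using $1-\cos r\le \min(r^2/2,2)$ one gets an integral of the form $\int_1^\infty \kappa(u) w(u/\lambda)\,\frac{\du}{u}$ for an explicit kernel $w$ that decays, and the decrease of $\kappa(u)/\log(4+u)$ from $({\rm H}3)$ plus the doubling from $({\rm H}2)$ let me pull $\kappa(\lambda)$ out and bound the remainder by a constant, giving the upper bound $\phi(\lambda)\les\kappa(\lambda)$. For the lower bound, restrict the integral of the (nonnegative) integrand to a fixed window like $r\in[1,2]$ where $1-\cos r\ge c>0$, on which $\kappa(\lambda/r)\ge\kappa(\lambda/2)\ge c_0^{-1}\kappa(\lambda)$ by $({\rm H}2)$; this yields $\phi(\lambda)\gtrsim\kappa(\lambda)$. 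Together these give~\e{n60b}.

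For part $i)$, the cleanest route is to relate the semi-norm $\lA g\rA_{s,\kappa}$ to the $L^2$ norm of a Fourier multiplier with symbol comparable to $|\xi|^s\kappa(|\xi|)$, and then use $ii)$ to replace $\kappa$ by $\phi$. Concretely, by Plancherel,
\[
\iint \la 2g(x)-g(x+\alpha)-g(x-\alpha)\ra^2\Big(\tfrac{1}{|\alpha|^{s}}\kappa\big(\tfrac1{|\alpha|}\big)\Big)^2\frac{\dx\,\dalpha}{|\alpha|^2}
=\int_{\xR^2}m(\xi)\,\bla\hat g(\xi)\bra^2\,\dxi,
\]
where $m(\xi)=\int_{\xR^2}\big(2-2\cos(\alpha\cdot\xi)\big)\Big(\tfrac{1}{|\alpha|^{s}}\kappa\big(\tfrac1{|\alpha|}\big)\Big)^2\frac{\dalpha}{|\alpha|^2}$. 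By rotational invariance $m(\xi)=m(|\xi|)$, and scaling $\alpha\mapsto\alpha/|\xi|$ pulls out $|\xi|^{2s}$ times $\int(2-2\cos(\alpha\cdot e))\,|\alpha|^{-2-2s}\kappa(|\xi|/|\alpha|)^2\,\dalpha$; splitting this at $|\alpha|=1$ and using $2-2\cos\le\min(|\alpha\cdot e|^2,4)$, together with~\e{est:kappatwo} from Lemma~\ref{L:2.6} to control $\kappa^2(1/r)$ at small $r$ and the doubling/monotonicity at large $r$, shows the remaining integral is $\sim\kappa(|\xi|)^2$. Hence $m(\xi)\sim |\xi|^{2s}\kappa(|\xi|)^2\sim|\xi|^{2s}\phi(|\xi|)^2$ by part $ii)$, which is exactly the square of the symbol of $\D^{s,\phi}$; Plancherel again gives the two-sided bound claimed. (Alternatively one can avoid recomputing $m$ by noting that the kernel of $\D^{s,\phi}$ acting via second differences has symbol $\int(2-2\cos(\alpha\cdot\xi))(\cdots)\,\dalpha$ already, but doing the scalar symbol estimate directly is the shortest.)

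The main obstacle I expect is the careful verification of the convergence and two-sided bounds in the region $0<r\le\lambda$ (equivalently $|\alpha|\le 1$ after scaling), where one must show that the weight $\kappa(\lambda/r)$, despite growing, does not destroy integrability and is in fact comparable to $\kappa(\lambda)$ up to constants: this is precisely where hypothesis $({\rm H}3)$ (logarithmic growth control) and the sub-polynomial-type estimates~\e{est:kappa}--\e{est:kappatwo} of Lemma~\ref{L:2.6} are indispensable, and the bookkeeping of which power of $r$ beats which growth of $\kappa$ must be done with some care. The rest — Plancherel, rotational invariance, scaling, and the elementary bound $2-2\cos t\le\min(t^2,4)$ — is routine. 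Once part $ii)$ is in hand, part $i)$ is a short consequence as sketched above.
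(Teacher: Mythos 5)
Your plan matches the paper's in spirit (Plancherel, reduce to a scalar symbol estimate, invoke the admissibility hypotheses via Lemma~\ref{L:2.6}, and couple $i)$ with $ii)$), and your sketch of part~$ii)$ is on the right track. However, your proof of part~$i)$ contains a genuine error that restricts the argument to $s<1$ rather than the full claimed range $0<s<2$.

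The Fourier transform of $x\mapsto 2g(x)-g(x+\alpha)-g(x-\alpha)$ is $\bigl(2-2\cos(\alpha\cdot\xi)\bigr)\hat g(\xi)$, so Plancherel applied to the squared $L^2_x$ norm gives the \emph{square} of that multiplier. Your formula
\[
m(\xi)=\int_{\xR^2}\bigl(2-2\cos(\alpha\cdot\xi)\bigr)\Bigl(\tfrac{1}{|\alpha|^{s}}\kappa\bigl(\tfrac1{|\alpha|}\bigr)\Bigr)^2\frac{\dalpha}{|\alpha|^2}
\]
should instead carry $\bigl(2-2\cos(\alpha\cdot\xi)\bigr)^2$ (the paper writes, up to a constant, $I(\xi)=\frac{1}{\pi^2}\int(1-\cos(\xi\cdot h))^2\kappa^2(1/|h|)\,|h|^{-2-2s}\,\dh$). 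This is not cosmetic: with the first power, the integrand near $\alpha=0$ behaves like $|\alpha|^{2}\cdot|\alpha|^{-2-2s}\kappa^2(1/|\alpha|)$, and in polar coordinates $\int_0 r^{1-2s}\kappa^2(1/r)\,\dr$ diverges whenever $s\ge1$, so $m(\xi)$ is $+\infty$ on precisely the range you most need. With the correct squared symbol the near-origin behavior is $|\alpha|^{4}\cdot|\alpha|^{-2-2s}$, i.e.\ $\int_0 r^{3-2s}\,\dr$, which converges for all $s<2$; the upper bound then goes through exactly as you outlined, using $(1-\cos\theta)^2\le\min\{4,\theta^4\}$ in place of $\min\{\theta^2,4\}$ and peeling off a small $\eps_0$-power to absorb $\kappa^2(1/|h|)$ via~\e{est:kappatwo}, and the lower bound follows by restricting the $\alpha$-integral to a window $|\alpha||\xi|\sim 1$. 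Once you correct the exponent, your argument and the paper's coincide.
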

\begin{proof}See Appendix~\ref{A:2}.
\end{proof}
\subsection{Estimates for the finite difference operators}

In the following lemma, we gather several estimates that will be widely used in the sequence.
\begin{lemma}\label{L:technical}
\begin{enumerate}[i)]
\item\label{tech:i} For all $a\in [0,+\infty)$ and $b\in (0,1)$, there exists $C>0$ such that
\be\label{tech:1}
\frac{1}{C}\lA f\rA_{\dot{H}^{a+b}}^2\le \int_{\xR^2} \Vert  \delta_{\alpha}f\Vert _{\dot H^{a}}^2
\frac{\dalpha}{|\alpha|^{2+2b}}\le C \lA f\rA_{\dot{H}^{a+b}}^2.
\ee
\item\label{item:Triebel} For all $b\in (0,1)$, 
there exists a constant $C$ such that
\be\label{tech:Triebel}
\bigg(\int_{\xR^2}\bigg(\int_{\xR^2}\la \delta_\a f(x)\ra^2\frac{\dalpha}{\la\a\ra^{2+2b}}\bigg)^2\dx\bigg)^\frac14 \le C \lA f\rA_{\dot{H}^{b+\frac{1}{2}}}.
\ee

\item \label{tech:ii} Assume that
\be\label{tech:assu}
a\in [0,+\infty),\quad \gamma\in [1,+\infty),\quad \gamma<b<2\gamma.
\ee
Then there exists $C>0$ such that
\be\label{tech:2}
\int_{\xR^2} \lA\delta_{\alpha} f-\a\cdot\nabla_x f\rA_{\dot H^{a}}^{2\gamma}\frac{\dalpha}{|\alpha|^{2+2b}}\le 
C\lA f\rA_{\dot{H}^{a+\frac{b}{\gamma}}}^{2\gamma}.
\ee
\item\label{tech:iii} Assume that $a,b,\gamma$ satisfy~\e{tech:assu}. Then there exists $C>0$ such that
\be\label{tech:3}
\int_{\xR^2} \lA\delta_{\alpha} f+\delta_{-\a}f\rA_{\dot H^{a}}^{2\gamma}\frac{\dalpha}{|\alpha|^{2+2b}}\le 
C\lA f\rA_{\dot{H}^{a+\frac{b}{\gamma}}}^{2\gamma}.
\ee
\end{enumerate}
\end{lemma}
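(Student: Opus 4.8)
The plan is to treat the four estimates in turn, taking \ref{tech:i} as the model case and deducing \ref{tech:iii} from \ref{tech:ii}.

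\emph{Part \ref{tech:i}.} I would argue entirely on the Fourier side. By Plancherel, $\|\delta_\alpha f\|_{\dot H^a}^2 = c\int_{\xR^2}|\xi|^{2a}|1-e^{-i\alpha\cdot\xi}|^2|\hat f(\xi)|^2\dxi$, so by Tonelli
\[
\int_{\xR^2}\|\delta_\alpha f\|_{\dot H^a}^2\frac{\dalpha}{|\alpha|^{2+2b}} = c\int_{\xR^2}|\xi|^{2a}|\hat f(\xi)|^2 K_b(\xi)\dxi,\qquad K_b(\xi):=\int_{\xR^2}|1-e^{-i\alpha\cdot\xi}|^2\frac{\dalpha}{|\alpha|^{2+2b}}.
\]
A rotation sending $\xi/|\xi|$ to $e_1$ together with the scaling $\alpha\mapsto|\xi|^{-1}\alpha$ shows $K_b(\xi)=c_b|\xi|^{2b}$, where $c_b=\int_{\xR^2}|1-e^{-i\alpha_1}|^2|\alpha|^{-2-2b}\dalpha$ is finite and positive precisely because $b\in(0,1)$ (near $\alpha=0$ use $|1-e^{-i\alpha_1}|^2\lesssim|\alpha|^2$ and $2+2b<4$; near infinity use $|1-e^{-i\alpha_1}|^2\le4$ and $2+2b>2$). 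Hence the middle term of \eqref{tech:1} equals $cc_b\|f\|_{\dot H^{a+b}}^2$, which yields both inequalities at once.

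\emph{Part \ref{item:Triebel}.} Set $F(x):=\int_{\xR^2}|\delta_\alpha f(x)|^2\langle\alpha\rangle^{-2-2b}\dalpha$; the left-hand side of \eqref{tech:Triebel} is $\|F\|_{L^2}^{1/2}$, so it suffices to bound $\|F\|_{L^2}$ by $C\|f\|_{\dot H^{b+1/2}}^2$. Minkowski's integral inequality \eqref{Min} with $p=2$ gives $\|F\|_{L^2}\le\int_{\xR^2}\big\||\delta_\alpha f|^2\big\|_{L^2}\langle\alpha\rangle^{-2-2b}\dalpha=\int_{\xR^2}\|\delta_\alpha f\|_{L^4}^2\langle\alpha\rangle^{-2-2b}\dalpha$. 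Now $\dot H^{1/2}(\xR^2)\hookrightarrow L^4(\xR^2)$ by \eqref{Sobolev} with $t=1/2$, and $\langle\alpha\rangle^{-2-2b}\le|\alpha|^{-2-2b}$, so the right-hand side is $\lesssim\int_{\xR^2}\|\delta_\alpha f\|_{\dot H^{1/2}}^2|\alpha|^{-2-2b}\dalpha\lesssim\|f\|_{\dot H^{b+1/2}}^2$ by Part \ref{tech:i} applied with $a=1/2$. (The application of Minkowski is legitimate: $|1-e^{-i\alpha\cdot\xi}|^2\lesssim(|\alpha||\xi|)^{2b}$ since $b<1$, whence $\delta_\alpha f\in\dot H^{1/2}\hookrightarrow L^4$ for every $\alpha$ and $|\delta_\alpha f|^2\in L^2$.)

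\emph{Parts \ref{tech:ii} and \ref{tech:iii}.} Write $\widehat{\delta_\alpha f-\alpha\cdot\nabla_xf}(\xi)=m(\alpha\cdot\xi)\hat f(\xi)$ with $m(t):=1-e^{-it}-it$, and record the elementary bound $|m(t)|\le C\min(t^2,|t|)\le C|t|^{b/\gamma}$ for all $t\in\xR$, valid because $b/\gamma\in(1,2)$. This gives the pointwise-in-$\alpha$ estimate $\|\delta_\alpha f-\alpha\cdot\nabla_xf\|_{\dot H^a}\le C|\alpha|^{b/\gamma}\|f\|_{\dot H^{a+b/\gamma}}$. Splitting the power as $\|\cdot\|_{\dot H^a}^{2\gamma}=\|\cdot\|_{\dot H^a}^{2\gamma-2}\cdot\|\cdot\|_{\dot H^a}^{2}$, bounding the first factor by this estimate, and collecting the powers of $|\alpha|$,
\[
\int_{\xR^2}\|\delta_\alpha f-\alpha\cdot\nabla_xf\|_{\dot H^a}^{2\gamma}\frac{\dalpha}{|\alpha|^{2+2b}}\le C\|f\|_{\dot H^{a+b/\gamma}}^{2\gamma-2}\int_{\xR^2}\|\delta_\alpha f-\alpha\cdot\nabla_xf\|_{\dot H^a}^{2}\frac{\dalpha}{|\alpha|^{2+2b/\gamma}}.
\]
The last integral is handled exactly as in Part \ref{tech:i}: by Plancherel and Tonelli it equals $c\int|\xi|^{2a}|\hat f|^2\big(\int|m(\alpha\cdot\xi)|^2|\alpha|^{-2-2b/\gamma}\dalpha\big)\dxi=cc'_{b/\gamma}\|f\|_{\dot H^{a+b/\gamma}}^2$, the inner integral being $c'_{b/\gamma}|\xi|^{2b/\gamma}$ with $c'_{b/\gamma}<\infty$ precisely because $b/\gamma\in(1,2)$ (near $\alpha=0$ use $|m|^2\lesssim|\alpha|^4$ and $b/\gamma<2$; near infinity use $|m|^2\lesssim|\alpha|^2$ and $b/\gamma>1$). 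This proves \eqref{tech:2}. Then \eqref{tech:3} follows at once: since $\delta_\alpha f+\delta_{-\alpha}f=(\delta_\alpha f-\alpha\cdot\nabla_xf)+(\delta_{-\alpha}f-(-\alpha)\cdot\nabla_xf)$, convexity of $t\mapsto t^{2\gamma}$ gives $\|\delta_\alpha f+\delta_{-\alpha}f\|_{\dot H^a}^{2\gamma}\le 2^{2\gamma-1}\big(\|\delta_\alpha f-\alpha\cdot\nabla_xf\|_{\dot H^a}^{2\gamma}+\|\delta_{-\alpha}f-(-\alpha)\cdot\nabla_xf\|_{\dot H^a}^{2\gamma}\big)$; integrating against $|\alpha|^{-2-2b}\dalpha$ and applying \eqref{tech:2} to the first term and, after $\alpha\mapsto-\alpha$, to the second, yields \eqref{tech:3}.

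I do not expect a serious obstacle; the only point requiring care is in Part \ref{tech:ii}, where one must see that the two conditions $\gamma<b<2\gamma$ are each used and are sharp for this argument: the lower bound $b>\gamma$ is exactly what makes the pointwise multiplier bound $|m(t)|\lesssim|t|^{b/\gamma}$ hold (equivalently, it controls the large-frequency behaviour through the linear bound on $m$), while the upper bound $b<2\gamma$ is needed there as well and for the convergence at the origin of the model integral $\int|m(\alpha\cdot\xi)|^2|\alpha|^{-2-2b/\gamma}\dalpha$. Everything else reduces to Plancherel, Tonelli, Sobolev embedding and Hölder's inequality.
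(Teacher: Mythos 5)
Your proposal is correct and follows essentially the same route as the paper: reduce each estimate to a Fourier-side multiplier computation, with the two auxiliary embeddings $\dot H^{1/2}(\xR^2)\hookrightarrow L^4(\xR^2)$ and Minkowski's inequality supplying Part~\ref{item:Triebel}, and the elementary bounds on $m(t)=1-e^{-it}-it$ supplying Parts~\ref{tech:ii}--\ref{tech:iii}. The only point where you diverge mildly from the paper is in Part~\ref{tech:ii}: the paper keeps the full $2\gamma$-th power and applies Minkowski's inequality (with exponent $\gamma$) to push the $\alpha$-integral inside the $\xi$-integral, then evaluates $\int |\alpha|^{2\gamma}|\xi|^{2\gamma}\min\{1,|\alpha||\xi|\}^{2\gamma}\,|\alpha|^{-2-2b}\dalpha\lesssim|\xi|^{2b}$, whereas you first peel off a factor $\|\delta_\alpha f-\alpha\cdot\nabla f\|_{\dot H^a}^{2\gamma-2}\lesssim|\alpha|^{(2\gamma-2)b/\gamma}\|f\|_{\dot H^{a+b/\gamma}}^{2\gamma-2}$ pointwise in $\alpha$ and reduce to the quadratic ($\gamma=1$) case by absorbing the resulting power of $|\alpha|$ into the measure. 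Both are clean and equally tight (your $|t|^2$ versus $|t|$ dichotomy matches exactly the convergence windows $b<2\gamma$ and $b>\gamma$ in the paper's model integral), so there is nothing to add; one incidental remark is that in Part~\ref{item:Triebel} the paper's phrase ``applied with $a=1/4$'' is a slip for $a=1/2$, and your write-up uses the correct value.
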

\begin{proof}
$\ref{tech:i})$ Set $g=\D^a f$. Then
$$
\int_{\xR^2} \Vert  \delta_{\alpha}f\Vert _{\dot H^{a}}^2
\frac{\dalpha}{|\alpha|^{2+2b}}=\iint_{\xR^2} \frac{\la g(x)-g(x-\a)\ra^2}{|\alpha|^{2b}}
\frac{\dalpha}{|\alpha|^{2}}\cdot
$$
Consequently, \e{tech:1} is an immediate consequence of the equivalence 
of the homogeneous Sobolev norm~\e{norm:Sobolev} with the Gagliardo seminorm~\e{Gagliardo}.

$\ref{item:Triebel})$ It follows from Minkowski inequality~\e{Min} and the Sobolev embedding $\dot{H}^\frac12(\xR^2)\hookrightarrow L^4(\xR^2)$ that 
\begin{align*}
\bigg(\int_{\xR^2}\bigg(\int_{\xR^2}\la \delta_\a f(x)\ra^2\frac{\dalpha}{\la\a\ra^{2+2b}}\bigg)^2\dx\bigg)^\frac14
&\le\bigg( \int_{\xR^2} \blA  \delta_\a f \brA_{L^4}^2\frac{\dalpha}{|\alpha|^{2+2b}}\bigg)^{\mez}\\
&\le \bigg(\int_{\xR^2} \blA  \delta_\a f \brA_{\dot{H}^{\frac12}}^2\frac{\dalpha}{|\alpha|^{2+2b}}\bigg)^{\mez}
\end{align*}
Hence \e{tech:Triebel} follows from~\e{tech:1} applied with $a=1/4$. 

$\ref{tech:ii})$ The proof of this is best carried out by the Fourier transform. 
Indeed, 
$$
\mathcal{F}\left(\delta_{\alpha} f-\a\cdot\nabla_x f\right)(\xi)=
\left(1-e^{-i\a\cdot\xi}-i\a\cdot\xi\right)\hat{f}(\xi).
$$
In view of the elementary inequality
$$
\la 1-e^{-ia}-ia\ra\le \la a\ra\min\{1,\la a\ra\} \qquad (a\in \xR),
$$
we conclude that
\begin{multline*}
\int_{\xR^2} \lA\delta_{\alpha} f-\a\cdot\nabla_x f\rA_{\dot H^{a}}^{2\gamma}\frac{\dalpha}{|\alpha|^{2+2b}}\\
\le \int_{\xR^2} \bigg(\int \la\xi\ra^{2a} \big( \la \a\ra\la\xi\ra\min\{1,\la \a\ra\la\xi\ra\}\big)^2
\bla \hat{f}(\xi)\bra^2\dxi\bigg)^{\gamma}\frac{\dalpha}{|\alpha|^{2+2b}}.
\end{multline*}
As a result, it follows from Minkowski's inequality that
\begin{multline}\label{tech:n1}
\int_{\xR^2} \lA\delta_{\alpha} f-\a\cdot\nabla_x f\rA_{\dot H^{a}}^{2\gamma}\frac{\dalpha}{|\alpha|^{2+2b}}\\
\les \bigg(\int_{\xR^2} \bigg(\int_{\xR^2} \la \a\ra^{2\gamma}\la\xi\ra^{2\gamma}
\min\left\{1,\la \a\ra\la\xi\ra\right\}^{2\gamma}
\frac{\dalpha}{|\a|^{2+2b}}\bigg)^{\frac{1}{\gamma}}\la \xi\ra^{2a} \bla\hat{f}(\xi)\bra^2\dxi\bigg)^\gamma.
\end{multline}
Since $\gamma<b<2\gamma$ by assumption, we have 
$$
\int_{\xR^2} \la \a\ra^{2\gamma}\la\xi\ra^{2\gamma}\min\left\{1,\la \a\ra\la\xi\ra\right\}^{2\gamma}
\frac{\dalpha}{|\a|^{2+2b}}\les \la\xi\ra^{2b}.
$$
We thus obtain \e{tech:2} by reporting this inequality in \e{tech:n1}.

$\ref{tech:iii})$ By changing $\a$ into $-\a$, we deduce from~\e{tech:2} that
\be\label{tech:2-a}
\int_{\xR^2}\lA\delta_{-\alpha} f+\a\cdot\nabla_x f\rA_{\dot H^{a}}^{2\gamma}\frac{\dalpha}{|\alpha|^{2+2b}}\le 
C\lA f\rA_{\dot{H}^{a+\frac{b}{\gamma}}}^{2\gamma}.
\ee
Now, \e{tech:3} follows from \e{tech:2}, \e{tech:2-a} and the triangular inequality.
\end{proof}

\begin{lemma}Let $\gamma\in\{1,2\}$. 
\begin{enumerate}[i)]
\item\label{itemtech4:ii} 
For all $a\in [0,+\infty)$, and all $b,c\in (0,\gamma)$, there exists $C>0$ such that
\be\label{tech:5}
\iint_{\xR^2\times\xR^2} \Vert \delta_\alpha \delta_{h}f\Vert _{\dot H^{a}}^{2\gamma}
\kappa^{2\gamma}\left(\frac{1}{|h|}\right)\frac{\dh}{|h|^{2+2b}}\frac{\dalpha}{|\alpha|^{2+2c}}
\le C \blA \D^{a+\frac{b+c}{\gamma},\phi}f\brA_{L^2}^2.
\ee
\item\label{itemtech4:iii} For all $a\in [0,+\infty)$ and all $b\in (0,\gamma)$, there exists $C>0$ such that
\be\label{tech:4}
\int_{\xR^2} \Vert \delta_{h}f\Vert _{\dot H^{a}}^{2\gamma}
\kappa^{2\gamma}\left(\frac{1}{|h|}\right)\frac{\dh}{|h|^{2+2b}}
\le C \blA \D^{a+\frac{b}{\gamma},\phi}f\brA_{L^2}^2.
\ee
\end{enumerate}
\end{lemma}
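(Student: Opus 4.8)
The plan is to reduce both inequalities to the characterization of the weighted fractional Laplacian provided by Proposition~\ref{P:3.5}, combined with the elementary Fourier-side estimates already used in the proof of Lemma~\ref{L:technical}. For part~\ref{itemtech4:iii}), I would set $g=\D^a f$ so that $\Vert \delta_h f\Vert_{\dot H^a}^2 = \int_{\xR^2}\la \delta_h g(x)\ra^2\dx$, and then pass to the Fourier transform: $\mathcal{F}(\delta_h g)(\xi)=(1-e^{-ih\cdot\xi})\hat g(\xi)$, so that by Plancherel the left side of~\e{tech:4} equals
\begin{equation*}
\int_{\xR^2}\Big(\int_{\xR^2}\la 1-e^{-ih\cdot\xi}\ra^{2}\,\la\hat g(\xi)\ra^2\dxi\Big)^{\gamma}\kappa^{2\gamma}\!\left(\tfrac{1}{|h|}\right)\frac{\dh}{|h|^{2+2b}}
\end{equation*}
when $\gamma=1$, and a similar expression with the exponent $\gamma=2$ outside. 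For $\gamma=2$ one first applies Minkowski's inequality~\e{Min} to move the $\dh$-integral inside, reducing matters to bounding
\begin{equation*}
\int_{\xR^2}\la 1-e^{-ih\cdot\xi}\ra^{2}\kappa^{2}\!\left(\tfrac{1}{|h|}\right)\frac{\dh}{|h|^{2+2b}}
\end{equation*}
pointwise in $\xi$, and showing it is $\lesssim \la\xi\ra^{2b}\kappa^2(\la\xi\ra)\sim \la\xi\ra^{2b}\phi^2(\la\xi\ra)$ by~\e{n60b}; the key tool here is the scaling-type bound Lemma~\ref{L:2.6} (applied with $\sigma$ slightly below $2-2b$ near the origin and slightly above $-2b$, i.e. using $b<\gamma$), splitting the $h$-integral at $|h|\sim 1/\la\xi\ra$ and using $\la 1-e^{-ih\cdot\xi}\ra\le\min\{2,|h||\xi|\}$. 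Reporting this back and using $\int \la\xi\ra^{2(a+b/\gamma)}\phi^2(\la\xi\ra)\la\hat f(\xi)\ra^2\dxi = \brA \D^{a+\frac{b}{\gamma},\phi}f\blA_{L^2}^2$ (up to the $L^2$ part of the norm, which is harmless since $\phi\ge 1$) gives~\e{tech:4}.

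\textbf{The double-difference estimate.} For part~\ref{itemtech4:ii}), the structure is the same but now with two finite differences and two radial variables. Writing $g=\D^a f$ and using $\mathcal{F}(\delta_\alpha\delta_h g)(\xi)=(1-e^{-i\alpha\cdot\xi})(1-e^{-ih\cdot\xi})\hat g(\xi)$, Plancherel turns the left side of~\e{tech:5} into
\begin{equation*}
\iint_{\xR^2\times\xR^2}\Big(\int_{\xR^2}\la 1-e^{-i\alpha\cdot\xi}\ra^{2}\la 1-e^{-ih\cdot\xi}\ra^2\la\hat g(\xi)\ra^2\dxi\Big)^{\gamma}\kappa^{2\gamma}\!\left(\tfrac{1}{|h|}\right)\frac{\dh}{|h|^{2+2b}}\frac{\dalpha}{|\alpha|^{2+2c}}.
\end{equation*}
When $\gamma=2$, one applies Minkowski's inequality twice (once in $\alpha$, once in $h$) to bring both outer integrals inside, and is reduced to the pointwise-in-$\xi$ bound
\begin{equation*}
\Big(\int_{\xR^2}\la 1-e^{-i\alpha\cdot\xi}\ra^2\frac{\dalpha}{|\alpha|^{2+2c}}\Big)\Big(\int_{\xR^2}\la 1-e^{-ih\cdot\xi}\ra^2\kappa^2\!\left(\tfrac{1}{|h|}\right)\frac{\dh}{|h|^{2+2b}}\Big)\lesssim \la\xi\ra^{2c}\,\la\xi\ra^{2b}\phi^2(\la\xi\ra).
\end{equation*}
The first factor is the classical Gagliardo computation (yielding $\la\xi\ra^{2c}$, valid since $c\in(0,\gamma)$ with $\gamma\le 2$, though one only needs $c<1$ — this is why the hypothesis is $b,c\in(0,\gamma)$ and the product structure matters), and the second factor is exactly the bound established for part~\ref{itemtech4:iii}). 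Inserting $\la\xi\ra^{2(a+(b+c)/\gamma)}$ and recognizing $\brA\D^{a+\frac{b+c}{\gamma},\phi}f\blA_{L^2}^2$ closes the estimate. For $\gamma=1$ the argument is the same but one skips the Minkowski step, integrating directly.

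\textbf{Main obstacle.} The only genuinely delicate point is the pointwise-in-$\xi$ estimate of the $\kappa$-weighted integral $\int \la 1-e^{-ih\cdot\xi}\ra^2\kappa^2(1/|h|)|h|^{-2-2b}\dh$, because $\kappa$ is not a pure power and its precise growth must be tracked through the splitting at $|h|\sim\la\xi\ra^{-1}$. The low-$|h|$ piece $\int_{|h|\le 1/\la\xi\ra}|h|^2|\xi|^2\kappa^2(1/|h|)|h|^{-2-2b}\dh$ requires $\kappa^2(1/|h|)|h|^{2-2b}$ to be controlled by $\kappa^2(\la\xi\ra)\la\xi\ra^{-(2-2b)}$ at $|h|=\la\xi\ra^{-1}$, which is precisely~\e{est:kappatwo} with $\sigma=2-2b>0$; the high-$|h|$ piece $\int_{|h|\ge 1/\la\xi\ra}\kappa^2(1/|h|)|h|^{-2-2b}\dh$ is handled by the monotonicity $\kappa^2(1/|h|)\le\kappa^2(\la\xi\ra)$ for $|h|\ge\la\xi\ra^{-1}$ (hypothesis~\ref{H1}) together with the convergent integral $\int_{|h|\ge\la\xi\ra^{-1}}|h|^{-2-2b}\dh\sim\la\xi\ra^{2b}$. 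Everything else — Plancherel, Minkowski, the elementary bound $\la 1-e^{-ia}\ra\le\min\{2,|a|\}$, and the comparison $\phi\sim\kappa$ from~\e{n60b} — is routine, so I expect the write-up to be short once this scaling lemma is invoked correctly.
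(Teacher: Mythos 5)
Your overall strategy — Plancherel, Minkowski, and then the pointwise-in-$\xi$ bound on the $\kappa$-weighted integral via Lemma~\ref{L:2.6} — is exactly what the paper does for~\e{tech:4}, and your direct treatment of~\e{tech:5} by Fourier factoring $\mathcal{F}(\delta_\alpha\delta_h g)=(1-e^{-i\alpha\cdot\xi})(1-e^{-ih\cdot\xi})\hat g$ plus a double Minkowski is actually more self-contained than the paper, which just says \e{tech:5} ``follows immediately'' from \e{tech:1} and \e{tech:4} (a reduction that, read literally, requires the $2\gamma$-power analogue of \e{tech:1}, which the paper never states).

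However, there is a consequential exponent slip in your handling of $\gamma=2$. When you apply Minkowski to push the $\dh$- (and $\dalpha$-) integrals inside the squared $L^2_\xi$-integral, the outer power $\gamma$ moves to the outside and the inner integrand appears \emph{raised to the power $2\gamma$, followed by a $1/\gamma$ root}: the quantity that must be bounded pointwise in $\xi$ is
$$
m(\xi) \;=\; \int_{\xR^2}\bla 1-e^{-ih\cdot\xi}\bra^{2\gamma}\,\kappa^{2\gamma}\!\left(\tfrac1{|h|}\right)\frac{\dh}{|h|^{2+2b}}
\quad\text{(and similarly with $\bla 1-e^{-i\alpha\cdot\xi}\bra^{2\gamma}$ for the $\alpha$-integral),}
$$
and one needs $m(\xi)^{1/\gamma}\lesssim \la\xi\ra^{2b/\gamma}\phi^2(|\xi|)$. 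You instead wrote the inner integrands with exponent $2$ and $\kappa^2$ regardless of $\gamma$. This is not merely cosmetic: for $\gamma=2$ the hypothesis allows $c\in(0,2)$, and the unweighted factor $\int\bla 1-e^{-i\alpha\cdot\xi}\bra^{2}\,|\alpha|^{-2-2c}\dalpha$ \emph{diverges} whenever $c\ge1$, while $\int\bla 1-e^{-i\alpha\cdot\xi}\bra^{4}\,|\alpha|^{-2-2c}\dalpha$ converges precisely for $c<2$. So the $2\gamma$ power is essential to cover the stated range $b,c\in(0,\gamma)$, and the parenthetical in your write-up (``one only needs $c<1$'') is a symptom of the slip rather than a simplification. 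Once you replace $2$ and $\kappa^2$ by $2\gamma$ and $\kappa^{2\gamma}$ throughout, and then take the $1/\gamma$ root before inserting $|\xi|^{2a}\phi^2(|\xi|)|\hat f|^2$, the argument closes correctly and coincides with the paper's proof, which reduces to showing $m(\xi)\lesssim\phi^{2\gamma}(|\xi|)\la\xi\ra^{2b}$ via the inequality $(\min\{1,|\xi||h|\})^{\varepsilon_0}\kappa^{2\gamma}(1/|h|)\lesssim_{\varepsilon_0}\kappa^{2\gamma}(|\xi|)$ for a small $\varepsilon_0\in(0,2(\gamma-b))$. One further small remark: your splitting of the low-$|h|$ piece should take $\sigma$ strictly less than $2-2b$ in~\e{est:kappatwo} (you say this once, then write $\sigma=2-2b$), since taking exactly $\sigma=2-2b$ leaves a logarithmically divergent radial integral.
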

\begin{proof}
Notice that \e{tech:5} follows immediately from \e{tech:1} and~\e{tech:4}. So it suffices to prove the latter estimate. 

By repeating arguments similar to the ones used in the previous proof, we begin by writing that, since
$$
\bla \widehat{\delta_{h} f}(\xi)\bra=\bla 1-e^{-i h\cdot\xi}\bra\bla\hat{f}(\xi)\bra\le
\min\{1,\la h\ra\la\xi\ra\}\bra\bla\hat{f}(\xi)\bra,
$$
we have
\begin{multline*}
\int_{\xR^2} \Vert \delta_{h}f\Vert _{\dot H^{a}}^{2\gamma}
\kappa^{2\gamma}\left(\frac{1}{|h|}\right)\frac{\dh}{|h|^{2+2b}}
\\
\le \int_{\xR^2} \bigg(\int_{\xR^2} \la\xi\ra^{2a} \min\{1,\la h\ra\la\xi\ra\}^2\bla \hat{f}(\xi)\bra^2\dxi\bigg)^{\gamma}
\kappa^{2\gamma}\left(\frac{1}{|h|}\right)\frac{\dh}{|h|^{2+2b}}.
\end{multline*}
Since $\gamma\ge 1$, one may apply Minkowski's inequality to infer that
\be\nonumber
\begin{aligned}
\int_{\xR^2} \Vert \delta_{h}f\Vert _{\dot H^{a}}^{2\gamma}
\kappa^{2\gamma}\left(\frac{1}{|h|}\right)\frac{\dh}{|h|^{2+2b}}
&\les \bigg(\int_{\xR^2}m(\xi)^{\frac{1}{\gamma}}
\la \xi\ra^{2a}\bla\hat{f}(\xi)\bra^2\dxi\bigg)^\gamma\quad\text{where}\\
m(\xi)&\defn \int_{\xR^2}\min\left\{1,\la h\ra\la\xi\ra\right\}^{2\gamma}
\kappa^{2\gamma}\left(\frac{1}{|h|}\right)\frac{\dh}{|h|^{2+2b}}\cdot
\end{aligned}
\ee
So, to obtain~\e{tech:4}, it will be sufficient to prove that $m(\xi)\les \phi^{2\gamma}(|\xi|)\la\xi\ra^{2b}$.
By Lemma \ref{L:2.6}, one has for $\varepsilon_0>0$
\begin{align*}
	(\min \{ 1,|\xi||h|\})^{\varepsilon_0} \kappa^{2\gamma}\left(\frac{1}{|h|}\right)\lesssim_{\varepsilon_0} \kappa^{2\gamma}(|\xi|)\sim  \phi^{2\gamma}(|\xi|).
\end{align*} 
It follows, for $\varepsilon_0>0$,
\begin{align*}
	m(\xi)\lesssim_{\varepsilon_0} \int_{\xR^2}\min\left\{1,\la h\ra\la\xi\ra\right\}^{2\gamma-\varepsilon_0}\frac{\dh}{|h|^{2+2b}}\phi^{2\gamma}(|\xi|)\sim_{\varepsilon_0}  \phi^{2\gamma}(|\xi|)\la\xi\ra^{2b}.
\end{align*}
This completes the proof.
\end{proof}

\section{Nonlinearity}\label{S:3}

Given two functions $f=f(x)$ and $g=g(x)$, introduce the notation
\be\label{defi:L(f)}
\Lr(f)g=-\frac{1}{2\pi}\int_{\mathbb{R}^2}
\frac{\alpha\cdot\nabla_x\Delta_\alpha g}{\langle \Delta_\alpha f\rangle^{3}}\frac{\dalpha}{|\alpha|^2},
\ee
where recall that
\begin{equation*}
\Delta_{\alpha}u(x)=\frac{u(x)-u(x-\alpha)}{|\alpha|},\quad 
\langle a\rangle=\sqrt{1+a^2},\quad \a\cdot\nabla_x=\alpha_1\partial_{x_1}+\a_2\partial_{x_2}.
\end{equation*}

With this notation, 
the Muskat equation reads
\be\label{Muskat2}
\partial_tf +\Lr(f)f=0.
\ee

Recall that  the linearized Muskat equation around the null solution reads 
$\partial_t f+\D f=0$ where $\D$ is fractional Laplacian $(-\Delta)^{1/2}$ defined by
\begin{equation*}
\mathcal{F}(\D u)(\xi)=|\xi| \mathcal{F}(u)(\xi).
\end{equation*}
Indeed, we have 
\begin{equation*}
\D u(x)=\frac{1}{2\pi}\int_{\mathbb{R}^2}\frac{u(x)-u(x-\alpha)}{|\alpha|}\frac{\dalpha}{|\a|^2},
\end{equation*}
and hence
\begin{equation*}
\D u(x)=-\frac{1}{2\pi}\int_{\mathbb{R}^2}\alpha\cdot\nabla_x\Delta_\alpha u(x)\frac{\dalpha}{|\alpha|^2},
\end{equation*}
as can be verified by integrating by parts in $\a$ (see the identity \e{A2} below applied with $\zeta=0$.) 

So, when $f=0$, we have $\Lr(0)=\D$. The next proposition gives a key decomposition of 
the general operator $\Lr(f)$ into three components:
$$
\Lr(f)g=
P(f)g+V(f)\cdot\nabla_x g+R(f,g),
$$
where $P(f)$ is an elliptic operator of order $1$, $V(f)$ is a vector field 
and $R(f,g)$ is a remainder term.

\begin{proposition}[Quasilinearization formula]\label{P:3.1}
There holds
\be\label{m5}
\Lr(f)g=
P(f)g+V(f)\cdot\nabla_x g+R(f,g)
\ee
where the operator $P(f)$ is defined by
\be\label{defi:P}
P(f)g
=\frac{1}{2\pi}\int_{\xR^2}
\frac{\delta_\a g}{\langle \ca\cdot\nabla_x  f\rangle^3} \frac{\dalpha}{|\a|^3}\quad\text{with}\quad\ca=\frac{\a}{|\a|}\in \xS^1,
\ee
and where the vector field $V(f)$ is given by
\begin{equation}\label{defi:V(f)}
V(f)(x)= \frac{1}{2\pi}\int_{\xR^2}\frac{1}{2}
\left(\frac{1}{\langle\Delta_{-\alpha} f(x)\rangle^{3}}-\frac{1}{\langle \Delta_{\alpha} f(x)\rangle^{3}}\right)
\alpha\frac{\dalpha}{|\alpha|^{3}}\in \mathbb{R}^2,
\end{equation}
and the remainder term $R(f,g)$ as the form 
\be\label{m7}
R(f,g)(x)=\frac{1}{2\pi}\int_{\xR^2}  M_\a(x)\delta_{\alpha}g(x)\frac{\dalpha}{|\a|^3},
\ee
for some symbol $M_\a$ satisfying the following pointwise bound:
\be\label{m7est}
\la M_\a(x)\ra \le   
6\la \Delta_\a f(x)- \ca\cdot\nabla_xf(x)\ra +3 \la \nabla_x(\delta_\a f)\ra.
\ee
\end{proposition}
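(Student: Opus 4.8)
The identity~\e{m5} is an exact algebraic decomposition, so the strategy is to start from the definition~\e{defi:L(f)} of $\Lr(f)g$ and rewrite it until the three advertised pieces $P(f)g$, $V(f)\cdot\nabla_x g$ and $R(f,g)$ appear. The first step is to symmetrize the integral in $\alpha$. Writing the integrand of $\Lr(f)g$ schematically as $K_\alpha(x)\,\alpha\cdot\nabla_x\Delta_\alpha g(x)$ with $K_\alpha=-\langle\Delta_\alpha f\rangle^{-3}/(2\pi|\alpha|^2)$, I would replace the integral by its average with the one obtained via $\alpha\mapsto-\alpha$. This is where the identity alluded to as~\e{A2} (integration by parts in $\alpha$, the analogue of the computation that turns $\D$ into its finite-difference form) enters: it lets me trade the term $\alpha\cdot\nabla_x\Delta_\alpha g$ against finite differences $\delta_\alpha g$ plus boundary-type contributions. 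After symmetrization the leading part of the kernel should be the ``frozen-coefficient'' one in which $\langle\Delta_\alpha f(x)\rangle$ is replaced by $\langle\check\alpha\cdot\nabla_x f(x)\rangle$, because for small $\alpha$ one has $\Delta_\alpha f(x)\approx\check\alpha\cdot\nabla_x f(x)$; that frozen part is exactly $P(f)g$ as in~\e{defi:P}.

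**Extracting the transport term.** The next step is to identify the vector field. The operator $\Delta_\alpha$ acting on $g$ is not symmetric in $\alpha$: $\Delta_\alpha g(x)=\tfrac{1}{|\alpha|}(g(x)-g(x-\alpha))$, so $\Delta_{-\alpha}g(x)=\tfrac{1}{|\alpha|}(g(x-\alpha)... )$ — more precisely the odd part of $\alpha\cdot\nabla_x\Delta_\alpha g$ under $\alpha\mapsto-\alpha$ produces, after integration by parts, a genuine first-order term $\nabla_x g(x)$ hit by an odd weight. Collecting the odd-in-$\alpha$ part of the full kernel $\langle\Delta_\alpha f(x)\rangle^{-3}$ gives the antisymmetric combination $\langle\Delta_{-\alpha}f\rangle^{-3}-\langle\Delta_\alpha f\rangle^{-3}$, and pairing it with $\alpha$ yields precisely~\e{defi:V(f)}; the residual derivative falls on $g$ as $V(f)\cdot\nabla_x g$. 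One has to check convergence of the $\alpha$-integral defining $V(f)$: near $\alpha=0$ the antisymmetric difference is $O(|\alpha|)$ (smoothness of $f$) times $\alpha/|\alpha|^3$, which is integrable in $\xR^2$, and at infinity the weight decays, so $V(f)(x)$ is a well-defined element of $\xR^2$.

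**The remainder and its bound.** What is left, $R(f,g)$, is the difference between $\Lr(f)g$ and the sum $P(f)g+V(f)\cdot\nabla_x g$. By construction each remaining contribution carries a finite difference $\delta_\alpha g(x)$ against a symbol $M_\alpha(x)$ built from the difference between the true kernel $\langle\Delta_\alpha f(x)\rangle^{-3}$ and the frozen one $\langle\check\alpha\cdot\nabla_x f(x)\rangle^{-3}$, together with the lower-order piece of the integration-by-parts identity, which involves $\nabla_x(\delta_\alpha f)$. The pointwise bound~\e{m7est} then follows from two elementary facts: (i) the real-variable inequality $|\langle a\rangle^{-3}-\langle b\rangle^{-3}|\le 3|a-b|$ valid for all $a,b\in\xR$ (since $t\mapsto\langle t\rangle^{-3}$ is $1$-Lipschitz, indeed its derivative is bounded by... $3/(something)\le 3$ — one checks $|\tfrac{d}{dt}\langle t\rangle^{-3}|=3|t|\langle t\rangle^{-5}\le$ a constant, so one gets a clean constant), applied with $a=\Delta_\alpha f(x)$, $b=\check\alpha\cdot\nabla_x f(x)$, producing the term $6\la\Delta_\alpha f-\check\alpha\cdot\nabla_x f\ra$ after accounting for the two halves of the symmetrization; and (ii) $\langle\cdot\rangle^{-3}\le1$, which controls the $\nabla_x(\delta_\alpha f)$ contribution by $3\la\nabla_x(\delta_\alpha f)\ra$.

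**Main obstacle.** The only delicate point is bookkeeping: one must perform the symmetrization and the integration by parts in $\alpha$ \emph{simultaneously} and keep careful track of which terms are even and which are odd, so that the frozen kernel lands exactly on $\delta_\alpha g$ (giving $P(f)g$), the odd part lands exactly on $\nabla_x g$ (giving $V(f)\cdot\nabla_x g$ with the stated formula), and every leftover term manifestly contains a factor $\delta_\alpha g(x)$ with a symbol obeying~\e{m7est}. Getting the precise numerical constants ($6$ and $3$) right is a matter of tracking the factor $\tfrac12$ from the symmetrization through the Lipschitz estimate. There is no analytic difficulty here beyond the elementary Lipschitz bound for $\langle\cdot\rangle^{-3}$; the proposition is an identity plus a one-line pointwise estimate.
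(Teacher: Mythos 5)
Your proposal matches the paper's strategy: split the kernel $\langle\Delta_\alpha f\rangle^{-3}$ into its even and odd parts under $\alpha\mapsto-\alpha$, freeze the even part at $\langle\ca\cdot\nabla_x f\rangle^{-3}$ to extract $P(f)$, pair the odd part with $\nabla_x g(x)$ to extract $V(f)\cdot\nabla_x g$, and land the leftovers on $\delta_\alpha g$ via integration by parts in $\alpha$ (this is exactly the paper's Lemma~\ref{L:2.2} and its variants). One caution on the constants in~\eqref{m7est}: the factor $6$ does \emph{not} come from tracking the symmetrization weight $\tfrac12$, as you suggest. After integrating by parts, the symbol is $M_\alpha=-\cnx_\alpha\bigl(\tfrac{\alpha}{|\alpha|^3}(\langle\Delta_\alpha f\rangle^{-3}-\langle\ca\cdot\nabla_x f\rangle^{-3})\bigr)$, and the product rule produces two pieces each bounded by $3\,|\Delta_\alpha f-\ca\cdot\nabla_x f|$ — one from the Lipschitz bound on $\langle\cdot\rangle^{-3}$ multiplied against $\cnx_\alpha(\alpha/|\alpha|^3)=-|\alpha|^{-3}$, one from the radial derivative $\alpha\cdot\nabla_\alpha\langle\Delta_\alpha f\rangle^{-3}=3\tfrac{\Delta_\alpha f}{\langle\Delta_\alpha f\rangle^5}\bigl(\Delta_\alpha f-\ca\cdot\nabla_x f(x-\alpha)\bigr)$ — plus a third piece $3\tfrac{\Delta_\alpha f}{\langle\Delta_\alpha f\rangle^5}\,\ca\cdot\nabla_x\delta_\alpha f$ appearing after the rewrite $\nabla_x f(x-\alpha)=\nabla_x f(x)-\nabla_x\delta_\alpha f(x)$; this last term is controlled by $|3\tau\langle\tau\rangle^{-5}|\le 3$, not by $\langle\cdot\rangle^{-3}\le1$ as you state. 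So the ``bookkeeping'' you flag is genuinely a short divergence computation (exploiting that $\langle\ca\cdot\nabla_x f\rangle^{-3}$ is $0$-homogeneous in $\alpha$), not a matter of carrying a $\tfrac12$ through a Lipschitz bound.
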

\begin{proof}
The proof is in two steps. We begin by decomposing the 
nonlinearity into several pieces to obtain~\e{m5} with 
an explicit expression for the remainder term $R(f,g)$. 
Then, in the second step we will bound $R(f,g)$.

{\em Step 1: decompositions.} Introduce
\begin{align}
\mathcal{O}\left(\alpha,x\right)
&=\frac{1}{2}\frac{1}{\langle \Delta_{\alpha} f(x)\rangle^{3}}
-\frac{1}{2}\frac{1}{\langle \Delta_{-\alpha} f(x)\rangle^{3}},\label{Oss}\\
\mathcal{E}\left(\alpha,x\right) &=\frac{1}{2}\frac{1}{\langle \Delta_\alpha f(x)\rangle^{3}}
+\frac{1}{2}\frac{1}{\langle \Delta_{-\alpha} f(x)\rangle^{3}},\label{Ess}
\\
\mathcal{E}_0\left(\a,x\right)
&=\frac{1}{\langle \ca\cdot\nabla_x f (x)\rangle^{3}}\cdot\label{Esss}
\end{align}
The reasons to introduce these terms are the following. 
Firstly, one can decompose the coefficient 
$$
\frac{1}{\langle \Delta_\alpha f\rangle^{3}}
$$
into:
\be\label{m8}
\frac{1}{\langle \Delta_\alpha f\rangle^{3}}=\mathcal{E}\left(\a,\cdot\right)+\mathcal{O}\left(\alpha,\cdot\right).
\ee
Secondly, we have
$$
\mathcal{O}\left(-\alpha,x\right)=-\mathcal{O}\left(\alpha,x\right),\quad 
\mathcal{E}\left(-\alpha,x\right)=\mathcal{E}\left(\alpha,x\right).
$$
Eventually, we will replace $\mathcal{E}\left(\alpha,x\right) $ by $\mathcal{E}_0\left(\alpha,x\right)$ to the price 
of some error terms, involving $\Delta_{\alpha} f(x)-\ca\cdot\nabla f(x)$ or 
$\Delta_{-\alpha} f(x)+\ca\cdot\nabla f(x)$, which contribute to the remainder term.

Now, directly from the definition~\e{defi:L(f)} of $\Lr(f)g$ and~\e{m8}, we have 
\begin{align*}
\Lr(f)g = -\frac{1}{2\pi}\int_{\xR^2}\mathcal{E}\left(\alpha,\cdot\right) \a\cdot
\nabla_x\Delta_\alpha g(x)\frac{\dalpha}{|\alpha|^2}-\frac{1}{2\pi}\int_{\xR^2}\mathcal{O}\left(\alpha,\cdot\right)\a
\cdot \nabla_x\Delta_\alpha g(x)\frac{\dalpha}{|\alpha|^2}\cdot
\end{align*}
We further decompose the first term by writing
$$
\mathcal{E}\left(\alpha,\cdot\right)=\mathcal{E}_0\left(\a,\cdot\right)+\left(\mathcal{E}\left(\alpha,\cdot\right) -\mathcal{E}_0\left(\a,\cdot\right)\right),
$$ 
and, in the second term, we expand $\delta_\a g(x)$ into two parts: $g(x)$ and $-g(x-\a)$, that will be handle separately. 
It follows that 
\be\label{m10}
\Lr(f)g=
-\frac{1}{2\pi}\int_{\xR^2}\frac{1}{\langle \ca\cdot \nabla_x f\rangle^{3}}\a\cdot \nabla_x\Delta_\alpha g\frac{\dalpha}{|\alpha|^2}
+V(f)\cdot\nabla_x g+R(f,g),
\ee
where $V(f)$ is defined by~\e{defi:V(f)}, and 
$R(f,g)=R_1(f,g)+R_2(f,g)$ with
\begin{align}
R_1(f,g)&=
-\frac{1}{2\pi}\int_{\xR^2}\left(\mathcal{E}\left(\alpha,x\right)
-\mathcal{E}_0\left(\a,x\right)\right)\a\cdot \nabla_x\Delta_\alpha g(x)\frac{\dalpha}{|\alpha|^2}\label{defi:R1}\\
R_2(f,g)&=\frac{1}{2\pi}\int_{\xR^2}\mathcal{O}\left(\alpha,x\right) \a\cdot \nabla_x g(x-\alpha)\frac{\dalpha}{|\alpha|^{3}}\cdot\label{defi:R2}
\end{align}

The following lemma gives an alternative expression for the first term in the right-hand side of~\e{m10}, and 
thus establishes~\e{m10}.
\begin{lemma}\label{L:2.2}
For any vector 
$\zeta\in \mathbb{R}^2$, there holds
\begin{align}\label{A2}
-\int_{\xR^2} \frac{1}{\langle \ca\cdot \zeta\rangle^{3}} \a\cdot \nabla_x\Delta_\alpha g(x)\frac{\dalpha}{|\alpha|^2}
=\int_{\xR^2}\frac{1}{\langle \ca\cdot\zeta\rangle^3}  \frac{g(x)-g(x-\alpha)}{|\a|} \frac{\dalpha}{|\a|^2}\cdot
\end{align}
\end{lemma}
\begin{proof}
By symmetry, one has
\be\label{m13}
\int_{\xR^2}\frac{1}{\langle \zeta\cdot\ca\rangle^{3}} \frac{\a}{|\a|}\cdot \nabla_x g(x)\frac{\dalpha}{|\alpha|^2}=0.
\ee
On the other hand,
\be\label{m14}
\nabla_x(g(x-\a))=(\nabla_xg)(x-\a)=\nabla_\a(g(x)-g(x-\a)).
\ee
Hence, by integrating by parts,
\begin{align*}
-\int \frac{1}{\langle \ca\cdot\zeta\rangle^{3}} \a\cdot \nabla_x\Delta_\alpha g(x)\frac{\dalpha}{|\alpha|^2}
&=\int \frac{\nabla_\alpha (g(x)-g(x-\alpha))}{\langle \ca\cdot\zeta\rangle^{3}}\cdot\alpha\frac{\dalpha}{|\alpha|^{3}}\\
&=\int (g(x)-g(x-\alpha)) \cnx_\alpha\left(\frac{-\alpha}{\left(|\alpha|^2+(\a\cdot\zeta)^2\right)^{\frac{3}{2}}}\right) \dalpha.
\end{align*}
Now, note that  
\begin{align*}
\cnx_\alpha\left(\frac{-\alpha}{\left(|\alpha|^2+(\alpha\cdot\zeta)^2\right)^{\frac{3}{2}}}\right)
=\frac{-2}{\left(|\alpha|^2+(\alpha\cdot\zeta)^2\right)^{\frac{3}{2}}}
+\frac{3\alpha\cdot (\alpha+(\alpha\cdot\zeta)\zeta)}{\left(|\alpha|^2+(\alpha\cdot\zeta)^2\right)^{\frac{5}{2}}}
=\frac{1}{\langle\ca\cdot\zeta\rangle}\frac{1}{|\a|^3},
\end{align*}
which completes the proof.
\end{proof}

{\em Step 2: estimate of the remainder term.} 
It remains to prove the estimate~\e{m7est}. 

Now, we are going to exploit some symmetry to compute the remainder $R(f,g)$.
We begin by applying arguments parallel to those used to prove Lemma~\ref{L:2.2}. 
Firstly, since 
$\mathcal{E}\left(\alpha,x\right) -\mathcal{E}_0\left(\alpha,x\right)$ is symmetric with  
respect to $\alpha\mapsto -\alpha$, 
we have the following cancellation (parallel to~\e{m13}):
$$
\int_{\xR^2}\left(\mathcal{E}\left(\alpha,x\right)
-\mathcal{E}_0\left(\alpha,x\right)\right)\nabla_x g(x)\cdot\alpha\frac{\dalpha}{|\alpha|^3}=0.
$$
By considering that $\nabla_x(g(x-\a))=\nabla_\a(\delta_\a g)$ (see~\e{m14}), we see 
that the remainder term $R_1(f,g)$ (see~\e{defi:R1}) satisfies
$$
R_1(f,g)=\frac{1}{2\pi}\int_{\xR^2}\left(\mathcal{E}\left(\alpha,x\right)
-\mathcal{E}_0\left(\a,x\right)\right)\frac{\a\cdot \nabla_\a(\delta_\a g(x))}{|\a|}\frac{\dalpha}{|\alpha|^2}\cdot
$$
Then, by integrating by parts in $\a$, 
\begin{align*}
R_1(f,g)&=\frac{1}{2\pi}\int_{\xR^2}  A_\a(x)\delta_{\alpha}g(x) \dalpha,\quad \text{where}\\
A_\a(x)&\defn -\cnx_\alpha\left(\frac{\alpha}{|\alpha|^3}\left(\mathcal{E}\left(\alpha,x\right)
-\mathcal{E}_0\left(\alpha,x\right)\right)\right).
\end{align*}
Similarly, by using again $\nabla_x(g(x-\a))=\nabla_\a(\delta_\a g)$ and an integration by parts, we have
\begin{align*}
R_2(f,g)&=\frac{1}{2\pi}\int_{\xR^2}  B_\a(x)\delta_{\alpha}g(x) \dalpha,\quad \text{where}\\
B_\a(x)&\defn -\cnx_\alpha\left(\frac{\alpha}{|\alpha|^3}\mathcal{O}\left(\alpha,x\right) \right).
\end{align*}
By adding $R_1$ and $R_2$, and remembering \e{m8}, we conclude that 
the remainder term $R(f,g)$ can be written as
\begin{align*}
R(f,g)(x)&=\frac{1}{2\pi}\int_{\xR^2}  M_\a(x)\delta_{\alpha}g(x) \dalpha,
\end{align*}
where $M_\a(x)$ is given by
$$
M_\a(x)\defn -\cnx_\alpha\left(\frac{\alpha}{|\alpha|^3}
\left(\frac{1}{\langle\Delta_\a f(x)\rangle^3} -\frac{1}{\langle \ca\cdot\nabla_xf(x)\rangle^3}\right)\right).
$$

By considering that $\a\cdot\nabla_\a u(\a)=\frac{\diff}{\diff\!\lambda}u(\lambda \a)\arrowvert_{\lambda=1}$, one 
easily verifies that
\begin{align*}
&\cnx_\alpha\left(\frac{\alpha}{|\alpha|^3}\right)=-\frac{1}{|\a|^3},\\
&\a\cdot\nabla_\alpha\bigg(\frac{1}{\langle \ca\cdot\nabla_xf(x)\rangle^3}\bigg)=0,\\
&\a\cdot\nabla_\a \bigg(\frac{1}{\langle\Delta_\a f(x)\rangle^3}\bigg)=3\frac{\Delta_\a f(x)}{\langle \Delta_\a f(x)\rangle^5}\big(\Delta_\a f(x)-\ca\cdot\nabla_x f(x-\a)\big).
\end{align*}
It follows that
\begin{align*}
M_\a(x)&=\frac{1}{|\a|^3}\left(\frac{1}{\langle\Delta_\a f(x)\rangle^3} -\frac{1}{\langle \ca\cdot\nabla_xf(x)\rangle^3}\right)\\
&\quad-\frac{3}{|\a|^3}\frac{\Delta_\a f(x)}{\langle \Delta_\a f(x)\rangle^5}\big(\Delta_\a f(x)-\ca\cdot\nabla_x f(x-\a)\big).
\end{align*}
Since
$$
\nabla_x f(x-\a)=\nabla_x f(x)-\nabla_x(\delta_\a f(x)),
$$
we end up with
\begin{align*}
M_\a(x)&=\frac{1}{|\a|^3}\left(\frac{1}{\langle\Delta_\a f(x)\rangle^3} -\frac{1}{\langle \ca\cdot\nabla_xf(x)\rangle^3}\right)\\
&\quad-\frac{3}{|\a|^3}\frac{\Delta_\a f(x)}{\langle \Delta_\a f(x)\rangle^5}\big(\Delta_\a f(x)-\ca\cdot\nabla_x f(x)\big)
-\frac{3}{|\a|^3}\frac{\Delta_\a f}{\langle \Delta_\a f(x)\rangle^5} \ca\cdot\nabla_x(\delta_\a f)\cdot
\end{align*}
Since
$$
\la \frac{1}{\langle r_1\rangle^3}-\frac{1}{\langle r_2\rangle^3}\ra\le 3\la r_1-r_2\ra,
$$
we obtain that
$$
\la M_\a(x)\ra \le   
\frac{6}{|\a|^3}\la \Delta_\a f(x)- \ca\cdot\nabla_xf(x)\ra +\frac{3}{|\a|^3}\la \nabla_x(\delta_\a f)\ra.
$$
Therefore \e{m7} is proven. This completes the proof.\end{proof}

\section{Nonlinear estimates}\label{S:4}

The goal of this section is to prove the following
\begin{theorem}\label{mainthm0}
There exist a positive constant $C$ such that, for all $f\in H^\infty(\xR^2)$, 
	\begin{align}\label{Z5}
		&\int_{\xR^2} \Lr(f)f \D^{4,\phi^2} f \dx\geq \frac{\blA\D^{\frac{5}{2},\phi}f\brA_{L^2}^2}{1+\lA f\rA_{\dot W^{1,\infty}}^3}\\&
		\quad-C\blA\D^{2,\phi}f\brA_{L^2}\big(1+\blA\D^{2,\phi}f\brA_{L^2}^2\big)\blA\D^{\frac{5}{2},\phi}f\brA_{L^2}^2 \left(\phi\left(\frac{\blA\D^{\frac{5}{2},\phi}f\brA_{L^2}}{\blA\D^{2,\phi}f\brA_{L^2}}\right)\right)^{-1}\nonumber.
	\end{align}
\end{theorem}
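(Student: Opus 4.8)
The plan is to commute $\D^{2,\phi}$ through the equation, using the quasilinearization formula of Proposition~\ref{P:3.1}, so that the parabolic term produces a coercive bilinear form in $\D^{5/2,\phi}f$ while every other contribution becomes subordinate to it. First I would observe that the Fourier multipliers $\D^{4,\phi^2}$ and $(\D^{2,\phi})^{2}$ share the symbol $|\xi|^{4}\phi(|\xi|)^{2}$, and that $\D^{2,\phi}$ is self-adjoint, so that
\[
\int_{\xR^2}\Lr(f)f\,\D^{4,\phi^2}f\,\dx=\int_{\xR^2}\D^{2,\phi}\big(\Lr(f)f\big)\,\D^{2,\phi}f\,\dx .
\]
Then I would insert $\Lr(f)f=P(f)f+V(f)\cdot\nabla_x f+R(f,f)$, set $u\defn\D^{2,\phi}f$, and use that $\D^{2,\phi}$ commutes with the translations $\delta_\alpha$ to write
\[
\D^{2,\phi}\big(P(f)f\big)=P(f)u+\frac{1}{2\pi}\int_{\xR^2}\Big[\D^{2,\phi},\tfrac{1}{\langle\ca\cdot\nabla_x f\rangle^{3}}\Big]\delta_\alpha f\,\frac{\dalpha}{|\alpha|^{3}},\qquad
\D^{2,\phi}\big(V\cdot\nabla_x f\big)=V\cdot\nabla_x u+[\D^{2,\phi},V\cdot\nabla_x]f .
\]

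\emph{The coercive term.} The leading contribution is $\int_{\xR^2}P(f)u\,u\,\dx$. Since the kernel $\langle\ca\cdot\nabla_x f(x)\rangle^{-3}|\alpha|^{-3}$ is even in $\alpha$, the changes of variable $x\mapsto x-\alpha$ and $\alpha\mapsto-\alpha$ turn this integral into
\[
\frac{1}{4\pi}\iint_{\xR^2\times\xR^2}\frac{|\delta_\alpha u(x)|^{2}}{\langle\ca\cdot\nabla_x f(x)\rangle^{3}}\,\frac{\dalpha\,\dx}{|\alpha|^{3}}\;+\;\frac{1}{4\pi}\iint_{\xR^2\times\xR^2}\delta_\alpha\!\Big(\tfrac{1}{\langle\ca\cdot\nabla_x f\rangle^{3}}\Big)(x)\,u(x-\alpha)\,\delta_\alpha u(x)\,\frac{\dalpha\,\dx}{|\alpha|^{3}} .
\]
In the first, non-negative, term I would bound $\langle\ca\cdot\nabla_x f\rangle^{3}\le\langle\lA\nabla_x f\rA_{L^\infty}\rangle^{3}\lesssim 1+\lA f\rA_{\dot W^{1,\infty}}^{3}$ and then invoke the Gagliardo identity~\e{tech:1} (with $a=0$, $b=\tfrac12$) together with $\int_{\xR^2}\lA\delta_\alpha u\rA_{L^2}^{2}|\alpha|^{-3}\dalpha\sim\lA u\rA_{\dot H^{1/2}}^{2}=\lA\D^{5/2,\phi}f\rA_{L^2}^{2}$ (the multipliers $|\xi|^{1/2}|\xi|^{2}\phi(|\xi|)$ and $|\xi|^{5/2}\phi(|\xi|)$ coincide); this produces exactly $c\,\lA\D^{5/2,\phi}f\rA_{L^2}^{2}/(1+\lA f\rA_{\dot W^{1,\infty}}^{3})$, the first term on the right of~\e{Z5}.

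\emph{The error terms.} It then remains to bound, by the second term on the right of~\e{Z5}, the following quantities: the symmetrization remainder above, estimated through $\big|\delta_\alpha\langle\ca\cdot\nabla_x f\rangle^{-3}(x)\big|\lesssim\min\{1,|\delta_\alpha\nabla_x f(x)|\}$; the commutator term coming from $[\D^{2,\phi},\langle\ca\cdot\nabla_x f\rangle^{-3}]$; the transport part $\int_{\xR^2}V\cdot\nabla_x u\,u\,\dx=-\tfrac12\int_{\xR^2}(\cn V)u^{2}\,\dx$ together with $\int_{\xR^2}[\D^{2,\phi},V\cdot\nabla_x]f\,u\,\dx$; and the remainder $\int_{\xR^2}\D^{2,\phi}R(f,f)\,u\,\dx$, for which~\e{m7est} controls the pointwise size of $M_\alpha$. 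Each is handled by Cauchy--Schwarz in $(x,\alpha)$ and H\"older in $x$, combined with: the finite-difference square-function estimates of Lemma~\ref{L:technical} and of the lemma that follows it — typically in the forms $\int\lA\delta_h g\rA_{\dot H^{a}}^{2\gamma}\kappa^{2\gamma}(1/|h|)|h|^{-2-2b}\,\dh\lesssim\lA\D^{a+b/\gamma,\phi}g\rA_{L^2}^{2}$ of~\e{tech:4} and $\int\lA\delta_\alpha\nabla f\rA_{L^4}^{2}|\alpha|^{-3}\,\dalpha\lesssim\lA f\rA_{\dot H^{2}}^{2}\le\lA\D^{2,\phi}f\rA_{L^2}^{2}$; the Sobolev embedding $\dot H^{1/2}(\xR^2)\hookrightarrow L^4(\xR^2)$ and the Hardy--Littlewood--Sobolev inequality~\e{Riesz} for the nonlocal pieces; and the equivalence $\phi\sim\kappa$ of Proposition~\ref{P:3.5} (with Proposition~\ref{Z9}) whenever it is convenient to move between the frequency and physical sides. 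A direct use of these tools already yields the stated bound \emph{without} the factor $\phi(\cdot)^{-1}$; to recover it I would split every $\alpha$- (and $h$-) integral at the scale $|\alpha|\sim\lA\D^{2,\phi}f\rA_{L^2}/\lA\D^{5/2,\phi}f\rA_{L^2}$: in the small-$\alpha$ regime the second-difference/Taylor structure of $P$, $V$ and $R$ supplies the extra smallness, and the slow growth of $\kappa$ recorded in Lemma~\ref{L:2.6} and in conditions $(\mathrm{H}2)$--$(\mathrm{H}3)$ of Definition~\ref{defi:admissible} converts it into the weight $\phi$ evaluated at that scale, i.e.\ into $\phi\big(\lA\D^{5/2,\phi}f\rA_{L^2}/\lA\D^{2,\phi}f\rA_{L^2}\big)^{-1}$, while the large-$\alpha$ regime is directly integrable.

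The hard part will be the quasilinearization bookkeeping in the last step: the commutators $[\D^{2,\phi},\langle\ca\cdot\nabla_x f\rangle^{-3}]$ and $[\D^{2,\phi},V\cdot\nabla_x]$ must be estimated by quantities that remain summable against the singular measure $|\alpha|^{-3}\dalpha$ (so that genuine cancellation, not crude size bounds, is used), and at the same time the weight $\phi$ must be followed through the dyadic splitting precisely enough to land the sharp factor $\phi(\cdot)^{-1}$ rather than a harmless $O(1)$ loss. Everything else is a lengthy but essentially routine reorganization of the estimates collected in Section~\ref{S:2}.
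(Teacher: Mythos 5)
The overall shape of your plan — commute a weighted Fourier multiplier through the equation, peel off a coercive term from $P(f)$, antisymmetrize the transport term, and absorb the remainders using the finite-difference lemmas and an interpolation in which $\phi$ reappears — is exactly the spirit of the paper's proof. But your choice of factorization $\D^{4,\phi^2}=(\D^{2,\phi})^2$ is not the one the paper makes, and the difference is load-bearing, not cosmetic. The paper distributes the multiplier \emph{asymmetrically}: it writes
\[
\int \Lr(f)f\,\D^{4,\phi^2}f\,\dx=\int \Lr(f)\big(\D^{\tdm,\phi}f\big)\,\D^{\frac52,\phi}f\,\dx+\int\big[\D^{\tdm,\phi},\Lr(f)\big]f\;\D^{\frac52,\phi}f\,\dx,
\]
commutes the \emph{lower}-order operator $\D^{3/2,\phi}$, and then applies the quasilinearization formula to $\Lr(f)(\D^{3/2,\phi}f)$ — the reverse order from yours, which quasilinearizes first and then commutes $\D^{2,\phi}$ through each piece. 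Two concrete consequences favor the paper's choice. First, the commutator estimate (Proposition~\ref{Z18}) leans on the second-difference representation~\eqref{dsphi} for $\D^{3/2,\phi}$ and the algebraic identity $s_h(uv)-u\,s_hv-v\,s_hu=(\delta_hu)(\delta_hv)-(\delta_{-h}u)(\delta_{-h}v)$; no analogous clean representation exists for $\D^{2,\phi}$, since $\int|h|^{-2}s_hg\,|h|^{-2}\dh$ is logarithmically divergent, so you would have to factor $\D^{2,\phi}=\D^{1/2}\D^{3/2,\phi}$ and track an extra commutator with $\D^{1/2}$. Second, and more importantly for the $\phi^{-1}$ factor: the paper's commutator errors land on intermediate exponents strictly inside $(2,5/2)$ — namely $\dot H^{9/4}$, $\dot H^{17/8}$, $\dot H^{25/12}$ and the corresponding weighted norms — precisely so that Lemma~\ref{L:3.5} can be invoked on every factor. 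Your symmetric split shifts those exponents up by roughly $1/4$: even the \emph{bilinear} part of your commutator $\int G_\alpha\,\D^{2,\phi}F_\alpha\,|\alpha|^{-2}\dalpha$ forces both Cauchy--Schwarz factors to the endpoint $\lA f\rA_{\dot H^{5/2}}$ and $\lA\D^{5/2,\phi}f\rA_{L^2}$, and the higher Fa\`a di Bruno terms in $\D^{2,\phi}F_\alpha$ creep even closer to (or over) $5/2$, where the weighted/unweighted interpolation you need to produce $\phi(\cdot)^{-1}$ is delicate at best and fails at worst. You gesture at a physical-space splitting at $|\alpha|\sim A^{1/2}/B^{1/2}$ to recover the $\phi^{-1}$, which is morally the dual of Lemma~\ref{L:3.5}; that may be salvageable, but it is a genuinely new verification, not a routine reorganization, and you have not shown that the top-order nonlinear pieces of $\D^{2,\phi}F_\alpha$ remain controllable at the endpoint.

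In short: your outline captures the correct decomposition and the correct list of error terms, and your coercivity step is even a bit cleaner than the paper's (no auxiliary $h=\D^{1/2}g$ is needed). But by commuting $\D^{2,\phi}$ instead of $\D^{3/2,\phi}$ you have discarded the two structural features — the finite-difference kernel~\eqref{dsphi} and the strict interiority of the intermediate Sobolev exponents — that make the paper's commutator and $\phi^{-1}$ bookkeeping go through without endpoint surgery. To complete your route you would need, at minimum, a substitute for~\eqref{dsphi} valid for $\D^{2,\phi}$ (or a careful two-step commutation through $\D^{1/2}\D^{3/2,\phi}$) and a proof that the endpoint cases of the interpolation still yield the $\phi^{-1}$ factor; neither is addressed.
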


The proof of this theorem is quite long and is separated into several steps. 
We rewrite the left-hand side of \e{Z5} as
$$
\int_{\xR^2}\Big( \Lr(f)\big( \D^{\tdm,\phi}f\big) \D^{\frac52,\phi} f +\big[\D^{\tdm,\phi}, \Lr(f)\big]f  \D^{\frac52,\phi} f\Big)\dx.
$$
We estimate the commutator in \S\ref{S:4.4}. To estimate the contribution of $\Lr(f)\big( \D^{\tdm,\phi}f\big)$, we  
use Proposition~\ref{P:3.1} and estimate the terms successively in paragraphs \S\ref{S:4.1}--\ref{S:4.3}.

\subsection{The convective term}\label{S:4.1}
We begin by studying the contribution of the convective term $V(f)\cdot\nabla_x$ which appears in Proposition~\ref{P:3.1}. We want to estimate
$$
\la \langle V(f)\cdot\nabla_x \D^{\frac{3}{2},\phi}f,\D^{\frac{5}{2},\phi}f\rangle\ra.
$$
To do so, we introduce the Riesz transform 
$\mathcal{R}=(\mathcal{R}_1,\mathcal{R}_2)$ which  
is defined by
\begin{equation*}
\mathcal{F}(\mathcal{R}(h))(\xi)=i\frac{\xi}{|\xi|}\hat h(\xi).
\end{equation*}
With this operator, one has
\begin{equation}
\nabla_x h(x)=\mathcal{R}\D(h)(x),\qquad
\mathcal{R}^{\star}=-\mathcal{R},
\end{equation}
where $\mathcal{R}^{\star}$ denotes the adjoint 
with respect to the $L^2(\xR^2)$-scalar product. 
It follows that for $g=\D^{\frac{5}{2},\phi}f$
\begin{align*}
\langle V(f)\cdot\nabla \D^{\frac{3}{2},\phi}f,g\rangle
=\langle V(f)\cdot\mathcal{R} g,g\rangle
=- \frac{1}{2} \big\langle \left[\mathcal{R}, V(f)\right ] g,g\big\rangle,
\end{align*}
where in the last identity we use the notation $[A,B]=AB-BA$. 

Consequently, we have to estimate the commutator 
$\left[\mathcal{R}, V(f)\right ]\cdot\nabla$. This is the purpose of the following propositon.

\begin{proposition}\label{X1}
There exists a positive constant $C$ such that, for all functions $f$ and $g$ in $H^\infty(\xR^2)$, 
\be\label{X1est}
\lA \left[\mathcal{R}, V(f)\right ] g\rA_{L^2} 
\le C \left(\lA f\rA_{\dot H^{\frac{9}{4}}}+
\lA f\rA_{\dot H^{\frac{17}{8}}}^2\right)\blA g\brA_{\dot{H}^{-\frac{1}{4}}}.
\ee
\end{proposition}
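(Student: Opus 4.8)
The plan is to estimate the commutator $[\mathcal{R}, V(f)]$ by a combination of a Coifman–Meyer / commutator estimate and a pointwise bound on the vector field $V(f)$ together with a control of its higher Sobolev regularity. The key observation is that $\mathcal{R}$ is a Calder\'on–Zygmund operator of order $0$, so commuting it with multiplication by $V(f)$ gains one derivative off $V(f)$; schematically $\lA [\mathcal{R}, V(f)]g\rA_{L^2} \les \lA \nabla V(f)\rA_{L^p}\lA g\rA_{L^q}$ with $\frac1p+\frac1q = \frac12$, or, in the homogeneous Sobolev scale, $\lA [\mathcal{R},V(f)]g\rA_{L^2}\les \lA V(f)\rA_{\dot H^{1+\frac14}}\lA g\rA_{\dot H^{-\frac14}}$ after choosing the exponents so that $\dot H^{5/4}(\xR^2)\hookrightarrow$ the right Lebesgue space and $\dot H^{-1/4}(\xR^2)$ pairs with it via H\"older and Sobolev \e{Sobolev}. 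Thus everything reduces to proving
\be\label{plan:Vbound}
\lA V(f)\rA_{\dot H^{\frac54}} \les \lA f\rA_{\dot H^{\frac94}} + \lA f\rA_{\dot H^{\frac{17}{8}}}^2.
\ee

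To prove \e{plan:Vbound} I would work directly from the integral formula \e{defi:V(f)}. Writing
$$
V(f)(x) = \frac{1}{2\pi}\int_{\xR^2}\frac12\left(\frac{1}{\langle \Delta_{-\alpha}f(x)\rangle^3} - \frac{1}{\langle \Delta_\alpha f(x)\rangle^3}\right)\alpha\,\frac{\dalpha}{|\alpha|^3} = \frac{1}{2\pi}\int_{\xR^2}\mathcal{O}(\alpha,x)\,\frac{\alpha}{|\alpha|^3}\,\dalpha,
$$
with $\mathcal{O}$ from \e{Oss}, the point is that $\mathcal{O}(\alpha,x)$ is odd in $\alpha$ and vanishes when $\Delta_\alpha f = \Delta_{-\alpha}f$, i.e.\ it is controlled by the second difference $\delta_\alpha\delta_{-\alpha}f$ (equivalently $2f(x)-f(x+\alpha)-f(x-\alpha)$), plus a quadratic term. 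More precisely, using $\la \langle r_1\rangle^{-3} - \langle r_2\rangle^{-3}\ra \le 3\la r_1-r_2\ra$ one gets a linear piece bounded by $|\alpha|^{-1}\la 2f(x)-f(x+\alpha)-f(x-\alpha)\ra$; the mean-value theorem applied more carefully on $r\mapsto \langle r\rangle^{-3}$ produces in addition a term quadratic in the first differences $\Delta_{\pm\alpha}f$. So one estimates $\lA V(f)\rA_{\dot H^{5/4}}$ by Minkowski's inequality \e{Min}, moving the $\dot H^{5/4}$ norm inside the $\alpha$-integral, and then bounding each contribution: for the linear part, $\int_{\xR^2}\lA 2f-f(\cdot+\alpha)-f(\cdot-\alpha)\rA_{\dot H^{5/4}}\frac{\dalpha}{|\alpha|^{1}\cdot|\alpha|^2}$ is handled by the second-difference estimate \e{tech:3} (with $a=5/4$, $\gamma=1$, $b=1/2$), which yields $\lA f\rA_{\dot H^{9/4}}$; for the quadratic part, one writes the product of two first differences, uses a fractional Leibniz rule together with \e{tech:1}, and the integral converges with total exponent giving $\lA f\rA_{\dot H^{17/8}}^2$ — the two factors of $\dot H^{17/8}$ coming from $2\times 17/8 = 17/4 = 9/4 + 2$, so that one derivative is "spent" on the $\dot H^{5/4}$ norm and the surplus is split evenly.

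The main obstacle, as I see it, is the careful bookkeeping in the quadratic term: one has to simultaneously (i) produce a structure of two finite differences so that the singular weight $|\alpha|^{-3}\cdot|\alpha|$ in the $\alpha$-integral is integrable near $\alpha=0$ after taking two $\dot H^{s}$-norms, and (ii) make sure the split of regularity between the two factors lands exactly at $17/8$, which is dictated by scaling; getting the low-frequency/high-frequency interactions right in the product estimate, rather than naively applying $\dot H^{s}\cdot\dot H^{s}\hookrightarrow \dot H^{s}$ (false in general), is where the real work lies. A clean way to organize this is to use the paraproduct-free approach advertised in the introduction: expand everything on the Fourier side, bound $\la 1-e^{-i\alpha\cdot\xi}\ra$ by $\min\{1,\la\alpha\ra\la\xi\ra\}$ as in the proof of Lemma~\ref{L:technical}, and reduce to an elementary $\alpha$-integral of the form $\int \min\{1,\la\alpha\ra\la\xi\ra\}^{2}|\alpha|^{-1-2b}\dalpha \sim \la\xi\ra^{2b}$. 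The commutator estimate itself — the reduction of $[\mathcal{R}, V(f)]$ to \e{plan:Vbound} — I expect to be routine given standard Calder\'on–Zygmund theory, so I would state it as a lemma and spend the bulk of the argument on \e{plan:Vbound}.
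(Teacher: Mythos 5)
Your overall architecture matches the paper: a Calder\'on--Zygmund commutator lemma reducing the statement to a bound $\lA V(f)\rA_{\dot H^{5/4}} \les \lA f\rA_{\dot H^{9/4}} + \lA f\rA_{\dot H^{17/8}}^2$, followed by a decomposition of $\partial_j V(f)$ into a ``linear'' symmetric-second-difference piece plus quadratic remainders. The commutator lemma and your scaling bookkeeping for the quadratic piece are both on the right track. But there is a genuine gap in the way you propose to handle the linear piece, and it is precisely the place where the paper does something non-obvious.

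You want to control $\int_{\xR^2}\lA 2f-f(\cdot+\alpha)-f(\cdot-\alpha)\rA_{\dot H^{5/4}}\,\frac{\dalpha}{|\alpha|^3}$ by Minkowski and then invoke \e{tech:3}. This fails for several independent reasons. First, \e{tech:3} is an estimate for $\int\lA\delta_\alpha f+\delta_{-\alpha}f\rA_{\dot H^a}^{2\gamma}\frac{\dalpha}{|\alpha|^{2+2b}}$ --- the norm is raised to the power $2\gamma$, so with $\gamma=1$ it is an $L^2(\dalpha)$-type bound, not the $L^1(\dalpha)$ integral Minkowski produces; there is no Cauchy--Schwarz that converts one into the other here, since the leftover weight $|\alpha|^{3-2b}\,\dalpha/|\alpha|^2$ is never integrable on $\xR^2$. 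Second, your proposed parameters $(a,\gamma,b)=(5/4,1,1/2)$ violate the hypothesis $\gamma<b<2\gamma$ of \e{tech:3}, and even if they did not, the lemma would produce $\lA f\rA_{\dot H^{a+b/\gamma}}=\lA f\rA_{\dot H^{7/4}}$, not $\lA f\rA_{\dot H^{9/4}}$. Third, and most structurally, the $L^1(\dalpha)$ integral does not see the cancellation that makes the critical exponent $9/4$ attainable: by interpolation $\lA s_\alpha f\rA_{\dot H^{5/4}}\les|\alpha|^\theta\lA f\rA_{\dot H^{5/4+\theta}}$ with $\theta\in[0,2]$, and the weight $|\alpha|^{\theta}\,\dalpha/|\alpha|^{3}$ is logarithmically divergent at $\theta=1$ and only converges for $\theta\neq 1$, which places you strictly above or below $\dot H^{9/4}$ on each frequency regime but never at it.

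The paper's device, which you are missing, is to differentiate inside the integral and decompose as in \e{decomposition:V}: the principal term is $-F'(\check\alpha\cdot\nabla_x f)\bigl(\Delta_{-\alpha}f_j+\Delta_\alpha f_j\bigr)$, whose crucial feature is that the coefficient $F'(\check\alpha\cdot\nabla_x f)$ depends on the direction $\check\alpha$ but not on $|\alpha|$. One then integrates in polar coordinates $\alpha=r\theta$ and uses Plancherel on the radial integral: the multiplier $\int_0^\infty(\Delta_{-r\theta}f_j+\Delta_{r\theta}f_j)\frac{\dr}{r}$ has symbol exactly of size $\la\theta\cdot\xi\ra$, so its $\dot H^{1/4}$-norm is $\les\lA f\rA_{\dot H^{9/4}}$. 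This exact Fourier-side cancellation is what Minkowski destroys; you need the identity, not the crude bound $\la 1-\cos\ra\le\min\{1,\cdot^2\}$, on the radial integral. Your Fourier-side strategy is actually the right vocabulary for this, but the point is to keep the radial $\dr$-integral inside the $L^2(\dxi)$-norm, not to pull it out. The remainder terms are then quadratic in finite differences as you anticipated, and there your $17/8$ count and the product estimate $\dot H^{5/8}\times\dot H^{5/8}\to\dot H^{1/4}$ do the job via \e{tech:2}, \e{tech:3}, and \e{tech:1}, consistently with the paper.
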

\begin{proof}The proof is in two steps. Firstly, we estimate the commutator 
between the Riesz transform and the multiplication by a function. 
\begin{lemma}
There exists a positive constant $C$ such that, for all functions $g_1$ and $g_2$ in $H^\infty(\xR^2)$,
\begin{equation}\label{A3}
\Vert \left[\mathcal{R}_j, g_1\right ] \partial_{x_k}g_2\Vert _{L^2}\lesssim
\Vert g_1\Vert _{\dot H^{\frac{5}{4}}}\Vert g_2\Vert _{\dot H^{\frac{3}{4}}}.
\end{equation}
\end{lemma}
\begin{proof}We follow the proof of a commutator 
estimate in our previous paper, see~\cite[Estimate $(60)$]{AN1}. 
Recall that the Riesz transform can be written under the form
$$
\mathcal{R}_jf(x)=-\frac{1}{2\pi}\lim_{\eps\to 0}\int_{\xR^2\setminus B(x,\eps)}
\frac{(x_j-y_j)}{|x-y|^3}f(y)\dy.
$$
It follows that
$$
\la \left[\mathcal{R}_j, g_1\right ] \partial_{x_k}g_2(x)\ra
=\frac{1}{2\pi}\la \int_{\mathbb{R}^2}(g_1(x)-g_1(y))\frac{x_j-y_j}{|x-y|^{3}}\partial_{y_k} (g(x)-g(y)) \dy\ra.
$$
Let us integrate by parts, and then integrate in $x$, to obtain
\be\label{n4Riesz}
\begin{aligned}
&\int_{\xR^2} \la\left[\mathcal{R}, g_1\right ] \cdot \nabla_x g_2(x)\ra^2\dx\\
&\qquad\lesssim 
\int_{\xR^2}\left(\int_{\mathbb{R}^2}\la \nabla g_1(y)\ra\la g_2(x)-g_2(y)\ra\frac{\dy}{|x-y|^{2}}\right)^2\dx\\
&\qquad\quad+\int_{\xR^2}\left(\int_{\mathbb{R}^2}\la g_1(x)-g_1(y)\ra\la g_2(x)-g_2(y)\ra\frac{\dy}{|x-y|^{3}}\right)^2\dx.
\end{aligned}
\ee
We will prove that the right-hand side in the previous inequality is bounded by 
$\Vert g_1\Vert _{\dot H^{\frac{5}{4}}}^2\Vert g_2\Vert _{\dot H^{\frac{3}{4}}}^2$. 
We begin by applying 
Cauchy-Schwarz inequality to get that the second 
term in the right-hand side above is bounded by
$$
\bigg( \int_\xR \bigg(\int_\xR \frac{|g_1(x)-g_1(y)|^2}{|x-y|^{2+3/2}}\dy\bigg)^2\dx\bigg)^{\mez}
\bigg( \int_\xR \bigg(\int_\xR \frac{|g_2(x)-g_2(y)|^2}{|x-y|^{2+1/2}}\dy\bigg)^2\dx\bigg)^{\mez}.
$$
By Lemma~\ref{L:technical}
\begin{align*}
\bigg( \int_{\xR^2} \bigg(\int_{\xR^2} \frac{|g_1(x)-g_1(y)|^2}{|x-y|^{2+3/2}}\dy\bigg)^2\dx\bigg)^{\mez}
\les \lA g_1\rA_{\dot{H}^{\mez+\frac34}}^2=\lA g_1\rA_{\dot{H}^{\frac54}}^2,
\end{align*}
and 
\be\label{Riesz-comm-10}
\bigg( \int_{\xR^2} \bigg(\int_{\xR^2} \frac{|g_2(x)-g_2(y)|^2}{|x-y|^{2+1/2}}\dy\bigg)^2\dx\bigg)^{\mez}\les \lA g_2\rA_{\dot{H}^{\mez+\frac14}}^2=\lA g_2\rA_{\dot{H}^\frac34}^2.
\ee

We now have to prove similar estimates for 
the first term in the right-hand side of \e{n4Riesz}. 
First, we apply the Cauchy-Schwarz inequality to bound it by
$$
 \bigg(\int_{\xR^2}\bigg(\int_{\xR^2}
 \frac{|\nabla g_1(y)|^{2}}{|x-y|^{2-1/2}} \dy\bigg)^2 \dx\bigg)^{\frac{1}{2}}
\bigg( \int_{\xR^2}\bigg(\int_{\xR^2}\frac{|g_2(x)-g_2(y)|^2}{|x-y|^{2+1/2}}\dy\bigg)^2 \dx\bigg)^{\frac{1}{2}}.
$$
The second factor is bounded by means of~\e{Riesz-comm-10}. 
To estimate the first one, we use 
the Riesz potential (see~\S\ref{S:2.2.a}) to write
$$
 \bigg(\int_{\xR^2}\bigg(\int_{\xR^2}
 \frac{|\nabla g_1(y)|^{2}}{|x-y|^{2-1/2}} \dy\bigg)^2 \dx\bigg)^{\frac{1}{2}}\les 
 \blA \mathbf{I}_{\frac{1}{2}}\big(|\nabla g_1|^{2}\big)\brA_{L^2}^{2}.
$$
Now, by considering the Hardy-Littlewood-Sobolev estimate~\e{Riesz}, 
we find that
\begin{align*}
\blA \mathbf{I}_{\frac{1}{2}}\big(|\nabla g_1|^{2}\big)\brA_{L^2}^{2}\le 
\blA \nabla g_1\brA_{L^{\frac{8}{3}}}^{2}\les \blA \nabla g_1\brA_{\dot{H}^\frac14}^{2}=\blA g_1\brA_{\dot{H}^\frac54}^{2}.
\end{align*}This completes the proof of the lemma.
\end{proof}
In view of the previous lemma, it remains only to estimate 
$\Vert V(f)\Vert _{\dot H^\frac{5}{4}}$, which is equivalent to estimate the 
$\dot{H}^\frac14(\xR^2)$-norm of $\nabla_x V(f)$. 
For the sake of shortness, introduce the function $F\colon \xR\to\xR$ defined by
$$
F(\tau)=\frac{1}{\langle \tau\rangle^3}\cdot
$$
Then recall that,
$$
V(f)(x)= \frac{1}{4\pi}\int_{\xR^2}\( F(\Delta_{-\a}f)-F(\Delta_\a f)\)  \alpha\frac{\dalpha}{|\alpha|^{3}}.
$$
Consider an index $1\le j\le 2$, set $f_j=\partial_{x_j}f$ and observe that one has the following decomposition
\begin{align*}
&\partial_j \( F(\Delta_{-\a}f)-F(\Delta_\a f)\)\\
&\qquad\qquad=- F'(\ca\cdot \nabla_x f)\(\Delta_{-\a}f_j+\Delta_{\a}f_j\)\\
&\qquad\qquad\quad+\mez \( F'(\Delta_{-\a}f)+F'(\Delta_\a f)\)\(\Delta_{-\a}f_j-\Delta_{\a}f_j\)\\
&\qquad\qquad\quad+\mez \( F'(\Delta_{-\a}f)-F'(\Delta_\a f)+2F'(\ca\cdot\nabla_x f)\)\(\Delta_{-\a}f_j+\Delta_{\a}f_j\).
\end{align*}
Now, since the function $F'$ is odd, bounded and Lipschitz, one has for any couple of real numbers $\tau_1,\tau_2$,
\begin{align*}
&\la F'(\tau_1)+F'(\tau_2)\ra=\la F'(\tau_1)-F'(-\tau_2)\ra\le \sup F'' \la \tau_1-(-\tau_2)\ra\les  \la \tau_1+\tau_2\ra,\\
&\la F'(\tau_1)-F'(\tau_2)\ra\le  2 \sup F'\les 1.
\end{align*}
This implies that
\be\label{decomposition:V}
 \partial_j \( F(\Delta_{-\a}f)-F(\Delta_\a f)\) =- F'(\ca\cdot \nabla_x f)\big( \Delta_{-\a}f_j+\Delta_{\a}f_j\big) +R_1+R_2+R_3,
\ee
where
\begin{align*}
\la R_1(\a,\cdot)\ra&\le \la \Delta_{-\a}f+\Delta_{\a}f\ra \( \la \Delta_{-\a}f_j\ra+\la \Delta_{\a}f_j\ra\),\\
\la R_2(\a,\cdot)\ra&\le \la \Delta_{\a}f-\ca\cdot\nabla f\ra \( \la \Delta_{-\a}f_j\ra+\la \Delta_{\a}f_j\ra\),\\
\la R_3(\a,\cdot)\ra&\le \la \Delta_{-\a}f+\ca\cdot\nabla_x f\ra\( \la \Delta_{-\a}f_j\ra+\la \Delta_{\a}f_j\ra\).
\end{align*}
These three remainder terms will be treated in a similar way. 

The two tricks in \e{decomposition:V} 
are that: $(i)$ this is an identity and not an inequality and $(ii)$ 
we have factored out a coefficient 
$F'(\ca\cdot \nabla_x f)$ in front of the symmetric difference $\Delta_{-\a}f_j+\Delta_{\a}f_j$ 
which does not depend on the 
length $|\a|$. Remembering that $F'$ is bounded and using polar coordinates, 
we can exploit these two facts by writing
$$
\la \int_{\xR^2}F'(\ca\cdot \nabla_x f)\big( \Delta_{-\a}f_j+\Delta_{\a}f_j\big)\frac{\dalpha}{|\a|^2}\ra
\les 
\int_{\xS^1}\la \int_0^{+\infty} \big(\Delta_{-r\theta }f_j+\Delta_{r\theta}f_j\big)\frac{\dr}{r}\ra \diff \! \mathcal{H}^1.
$$
Observe that
\begin{multline*}
\lA \int_{\xR^2}F'(\ca\cdot \nabla_x f)\big( \Delta_{-\a}f_j+\Delta_{\a}f_j\big)\frac{\dalpha}{|\a|^2}\rA_{\dot H^\frac{5}{4}}\\
\les 
\int_{\xS^1}\lA \int_0^{+\infty} \big(\Delta_{-r\theta }f_j+\Delta_{r\theta}f_j\big)\frac{\dr}{r}\rA_{\dot{H}^\frac14} \diff \! \mathcal{H}^1(\theta).
\end{multline*}
The $\dot{H}^\frac{1}{4}$-norm of the inner integral is computed by means 
of the Plancherel's identity. 
Here we use again the fact that the symbol of the symmetric difference operator $\delta_{-\a}+\delta_\a$ is given by 
$2-2\cos(\a\cdot \xi)$. This implies that for any $\theta\in \mathbb{S}^1$, 
\begin{align*}
&\lA \int_0^{+\infty} \big(\Delta_{-r\theta }f_j+\Delta_{r\theta}f_j\big)\frac{\dr}{r}\rA_{\dot{H}^\frac14}^2\\
&\qquad\qquad\le \int_{\xR^2}\int_0^{+\infty}4(1-\cos( r\theta\cdot\xi))^2 \la \xi\ra^\mez \bla \xi_j\hat{f}(\xi)\bra^2\frac{\dxi\dr}{r^3}\\
&\qquad\qquad\les  \int_{\xR^2}\la\theta\cdot\xi\ra \la \xi\ra^\mez   \bla \xi_j\hat{f}(\xi)\bra^2 \dxi\le \lA f\rA_{\dot{H}^\frac94}^2.
\end{align*}
This proves that
$$
\la \int_{\xR^2}F'(\ca\cdot \nabla_x f)\big( \Delta_{-\a}f_j+\Delta_{\a}f_j\big)\frac{\dalpha}{|\a|^2}\ra
\les \lA f\rA_{\dot{H}^\frac94}^2,
$$
which completes the analysis of the contribution of the first term in the right-hand side of \e{decomposition:V}. 

It remains to estimate the contributions of the remainder terms. 
To do so, we use three elementary ingredients: $(i)$ a symmetry argument (using the change of variables $\a\mapsto -\a$ to 
reduce the number of terms that need to be estimated), $(ii)$ the 
fact that the product is continuous from 
$\dot H^\frac{5}{8}(\xR^2)\times \dot H^\frac{5}{8}(\xR^2)$ 
into $\dot H^\frac{1}{4}(\xR^2)$. It follows that
\begin{align*}
\sum_{\ell=1}^3\lA R_\ell\rA_{\dot H^\frac{1}{4}}
&\les \int \lA \Delta_{-\a}f+\Delta_{\a}f\rA_{\dot{H}^\frac58}\lA \Delta_\a f_j\rA_{\dot{H}^\frac58}\frac{\dalpha}{|\a|^2}\\
&\quad +\int \lA \Delta_{\a}f-\ca\cdot\nabla f\rA_{\dot{H}^\frac58}\big(\lA \Delta_\a f_j\rA_{\dot{H}^\frac58}+\lA \Delta_{-\a} f_j\rA_{\dot{H}^\frac58}\big)\frac{\dalpha}{|\a|^2}.
\end{align*}
Therefore
\begin{align*}
\sum_{\ell=1}^3\lA R_\ell\rA_{\dot H^\frac{1}{4}}
&\lesssim\left(\int_{\xR^2}
\Vert \delta_{\alpha} f-\a\cdot \nabla f\Vert _{\dot H^{\frac{5}{8}}}^2
\frac{\dalpha}{|\alpha|^{2+3}} \right)^{\frac{1}{2}}\left(\int_{\xR^2} \Vert \delta_{\alpha} \nabla f\Vert _{\dot H^{\frac{5}{8}}}^2
\frac{\dalpha}{|\alpha|^{2+1}}\right)^{\frac{1}{2}}\\
&\quad+\left(\int_{\xR^2}
\Vert \delta_{-\alpha} f+\delta_{\alpha} f\Vert _{\dot H^{\frac{5}{8}}}^2
\frac{\dalpha}{|\alpha|^{2+3}}\right)^{\frac{1}{2}}
\left(\int_{\xR^2} \Vert \delta_{\alpha} \nabla f\Vert _{\dot H^{\frac{5}{8}}}^2
\frac{\dalpha}{|\alpha|^{2+1}}\right)^{\frac{1}{2}}.
\end{align*}
Hence, we deduce from Lemma~\ref{L:technical} that
$$
\sum_{\ell=1}^3\lA R_\ell\rA_{\dot H^\frac{1}{4}}\lesssim \lA f\rA_{\dot H^{\frac{17}{8}}}^2.
$$
The desired result~\e{X1est}
then follows from~\e{A3}. This completes the proof.
\end{proof}

\subsection{The elliptic term}\label{S:4.2}
We now move to the analysis of the elliptic term.

\begin{lemma}\label{lem1}
There exist two a positive constant $C$ such that, for all 
functions $f$ and $g$ in $H^\infty(\xR^2)$, 
\begin{align*}
	\langle P(f)g,\D g\rangle &\geq \frac{ \lA \nabla g\rA_{L^2}^2}{\langle \lA f\rA_{\dot{W}^{1,\infty}}\rangle^3} 	-C\lA f\rA_{\dot H^{\frac{9}{4}}}\lA g\rA_{\dot{H}^{\frac{7}{8}}}^2.
\end{align*} 
\end{lemma}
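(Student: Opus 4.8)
The starting point is to write down the inner product explicitly using the formula~\e{defi:P} for $P(f)$ together with the identity
$$
\langle P(f)g,\D g\rangle=-\frac{1}{2\pi}\int_{\xR^2}\int_{\xR^2}\frac{\delta_\alpha g(x)}{\langle\ca\cdot\nabla_x f(x)\rangle^3}\,\a\cdot\nabla_x\Delta_\alpha(\D g)(x)\,\frac{\dalpha}{|\a|^2}\dx ,
$$
or, better, to symmetrize in $x$: replacing $\nabla_x(g(x-\a))$ by $\nabla_\a(\delta_\a g)$ exactly as in Lemma~\ref{L:2.2}, and integrating by parts in $\a$, turns the double integral into something of the form $\int\int K_\a(x)\,\delta_\alpha g(x)\,\delta_\alpha(\D g)(x)$ plus harmless commutator pieces. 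The guiding heuristic is that the \emph{main term}, obtained by freezing the coefficient $\langle\ca\cdot\nabla_x f(x)\rangle^{-3}$, should reproduce (a multiple of) $\langle\|f\|_{\dot W^{1,\infty}}\rangle^{-3}\|\nabla g\|_{L^2}^2$ after undoing the finite-difference identity and using Plancherel; the error is what is left when the coefficient is allowed to vary.

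Concretely I would split
$$
\frac{1}{\langle\ca\cdot\nabla_x f(x)\rangle^3}=\frac{1}{\langle\ca\cdot\nabla_x f(x)\rangle^3}\ ,
$$
i.e. compare the coefficient at the two points $x$ and $x-\a$ that appear after symmetrization. Writing $\langle\ca\cdot\nabla_x f(x)\rangle^{-3}-\langle\ca\cdot\nabla_x f(x-\a)\rangle^{-3}$ and using the Lipschitz bound $|\langle r_1\rangle^{-3}-\langle r_2\rangle^{-3}|\le 3|r_1-r_2|$ (already used in \S\ref{S:3}), this difference is controlled by $3|\ca\cdot\nabla_x(\delta_\a f)(x)|\le 3|\nabla_x(\delta_\a f)(x)|$. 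The positive main term, where the coefficient is evaluated at a single point, is bounded below: since $\langle\ca\cdot\nabla_x f(x)\rangle^3\le\langle\|f\|_{\dot W^{1,\infty}}\rangle^3$ pointwise, and since the bilinear form $\int\int \langle\ca\cdot\nabla_xf\rangle^{-3}\,\delta_\alpha g\,\a\cdot\nabla_x\Delta_\alpha g\,|\a|^{-2}$ with the coefficient frozen is, up to the finite-difference identity and Plancherel, $\ge \langle\|f\|_{\dot W^{1,\infty}}\rangle^{-3}\|\nabla g\|_{L^2}^2$ (here one uses that $\delta_\alpha g\cdot\a\cdot\nabla_x\Delta_\alpha g$ integrates against $|\a|^{-2}$ to give a positive quadratic form in $g$, cf.\ the computation showing $\Lr(0)=\D$), one gets the first term on the right-hand side.

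For the error term I would estimate
$$
\Bigl|\int_{\xR^2}\int_{\xR^2}\bigl(\nabla_x(\delta_\a f)(x)\bigr)\,\delta_\alpha g(x)\,\delta_\alpha(\D g)(x)\,\frac{\dalpha}{|\a|^3}\,\dx\Bigr|,
$$
together with the commutator remainders produced by the integrations by parts (differentiating the cutoff $\a/|\a|^3$ and the coefficient). By Cauchy--Schwarz in $x$, Hölder, and the product law $\dot H^{5/8}\times\dot H^{5/8}\hookrightarrow\dot H^{1/4}$, each such term is bounded by a product of three factors of the shape $\bigl(\int\|\delta_\alpha(\cdots)\|_{\dot H^{5/8}}^2\,|\a|^{-2-\sigma}\dalpha\bigr)^{1/2}$ with the exponents $\sigma$ adding up correctly; applying Lemma~\ref{L:technical}~\ref{tech:i} (and its consequence~\e{tech:Triebel}) converts these into $\|f\|_{\dot H^{s}}$ and $\|g\|_{\dot H^{t}}$ norms. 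Balancing the exponents so that $\delta_\a f$ contributes $\dot H^{9/4}$ (one derivative in $x$, plus $5/4$ from the three powers of $|\a|$) and the two $g$-factors contribute $\dot H^{7/8}$ each is the bookkeeping that forces the particular numbers in the statement. The main obstacle, and the only genuinely delicate point, is to carry out the symmetrization and the integrations by parts in $\a$ cleanly so that the \emph{positive} principal part is isolated without sign loss — i.e. ensuring that the pieces one throws into the remainder really are of lower order and do not secretly contain a chunk of the coercive term; once the decomposition is set up correctly, all the remainder estimates are routine applications of Lemma~\ref{L:technical} and the Sobolev product law.
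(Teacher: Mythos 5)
Your plan starts from a reasonable symmetrization idea, but it misses the one step that makes the coercive term actually coercive, and this is not a cosmetic issue.

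If you symmetrize in $x$ (shift $x\mapsto x+\alpha$ in half the integral, then flip $\alpha\mapsto-\alpha$), you arrive at a main term of the shape
$$
\tfrac12\iint \Phi\bigl(\ca\cdot\nabla_x f(x)\bigr)\,\delta_\alpha g(x)\,\delta_\alpha(\D g)(x)\,\frac{\dalpha\dx}{|\alpha|^3},
$$
with $\Phi(r)=1/\langle r\rangle^3$, plus an error carrying the difference $\Phi(\ca\cdot\nabla_x f(x))-\Phi(\ca\cdot\nabla_x f(x-\alpha))$. The problem is that the integrand $\delta_\alpha g(x)\,\delta_\alpha(\D g)(x)$ is \emph{not} pointwise nonnegative; it is only its integral in $x$, for a constant coefficient, that is nonnegative by Plancherel. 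Once the coefficient $\Phi(\ca\cdot\nabla_x f(x))$ varies with $x$, the pointwise inequality $\Phi\ge\langle\|f\|_{\dot W^{1,\infty}}\rangle^{-3}$ can no longer be inserted under the integral sign, because you cannot lower a coefficient multiplying a quantity of indefinite sign. Your sentence ``the bilinear form with the coefficient frozen is, up to the finite-difference identity and Plancherel, $\ge\cdots$'' is exactly where the gap sits: freezing the coefficient and then unfreezing it is not legitimate unless the quadratic form is pointwise nonnegative in $(x,\alpha)$.

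The paper's proof handles precisely this: it sets $h=\D^{1/2}g$, symmetrizes in $\alpha$ (not $x$), and then uses the singular-integral representation of $\D^{1/2}$ to \emph{commute} $\D^{1/2}$ with the multiplication by $\Phi(\ca\cdot\nabla_x f)$. After the commutation the main term becomes
$$
I_0=\iint \Phi\bigl(\ca\cdot\nabla_x f(x)\bigr)\,\bigl|\delta_\alpha h(x)\bigr|^2\,\frac{\dalpha\dx}{|\alpha|^3},
$$
which \emph{is} manifestly nonnegative pointwise, so the bound $\Phi\ge\langle\|f\|_{\dot W^{1,\infty}}\rangle^{-3}$ transfers and gives the coercive term via the Gagliardo seminorm. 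The error is the $\D^{1/2}$-commutator, an integral in an independent variable $z$ coming from the kernel of $\D^{1/2}$, controlled by $\delta_z\Phi(\ca\cdot\nabla_x f)$ and ultimately by a Triebel--Lizorkin-type norm of $\nabla f$ plus Hardy--Littlewood--Sobolev; this is what produces the two balanced factors $\|h\|_{\dot H^{3/8}}^2=\|g\|_{\dot H^{7/8}}^2$. Your error term, by contrast, carries an untouched factor $\D g(x-\alpha)$, which after estimation yields $\|g\|_{\dot H^1}$ rather than two factors of $\|g\|_{\dot H^{7/8}}$, so even the bookkeeping does not lead to the stated right-hand side. (Also, your opening identity replaces $\D g(x)$ by $\alpha\cdot\nabla_x\Delta_\alpha(\D g)(x)$ inside a single $\alpha$-integral; this conflates two separate integral representations and is not an identity.)

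In short, the missing idea is the commutation of $\D^{1/2}$ with the coefficient (equivalently, the substitution $h=\D^{1/2}g$) so that the principal part is a genuine square. Without it, no amount of symmetrizing in $x$ makes the main term sign-definite, and the lower bound on the coefficient cannot be exploited.
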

\begin{proof}Define $\Phi(r)=1/\langle r\rangle^3$ and set $h=\D^{\mez}g$ so that 
$\D g=\D^{\frac{1}{2}}h$, $g=\D^{-\frac{1}{2}}h$. 
It follows from the definition of $P(f)$ (see~\e{defi:P}) 
that
\begin{align*}
\langle P(f)g,\D g\rangle
&= \frac{1}{2\pi}\underset{\xR^2\times\xR^2}{\iint}\Phi\left(\ca\cdot\nabla_x f(x)\right)
\D^{-\frac{1}{2}}\delta_\alpha h(x)\D^{\frac{1}{2}} h(x) \frac{\dalpha\dx}{|\alpha|^3}	\\
&= \frac{1}{4\pi}\underset{\xR^2\times\xR^2}{\iint}\Phi\left(\ca\cdot\nabla_x f(x)\right)
\D^{-\frac{1}{2}}\delta_\alpha h(x)\D^{\frac{1}{2}}\delta_\alpha h(x) \frac{\dalpha\dx}{|\alpha|^3},
\end{align*}
where, to make appear $\delta_\a h$ in the second line, as usual, we have split the integral $I$ at stake into $I=\mez I+\mez I$ 
and then made an elementary change of variable to replace the factor $\D^{\mez}h(x)$ by $(\D^\mez h)(x-\a)$ in the second half.  
Then, by using Plancherel's identity, it follows that
$$
\langle P(f)g,\D g\rangle= \frac{1}{4\pi}\underset{\xR^2\times\xR^2}{\iint} \D^{\frac{1}{2}}\left(\Phi\left(\ca\cdot\nabla_x f\right)
\delta_\alpha\D^{-\frac{1}{2}}h\right)(x) \delta_\alpha h(x)\frac{\dalpha\dx}{|\alpha|^3}\cdot
$$
Now we want to commute $\D^{\frac{1}{2}}$ with the multiplication by $\Phi\left(\ca\cdot\nabla_x f\right)$. 
To do so, recall that the operator $\D^{\frac{1}{2}}$ can be written as
$$
\D^\mez u=\frac{2^\mez \Gamma(5/4)}{\pi\la \Gamma(-1/4)\ra} \int_{\xR^2}
\frac{u(x)-u(x-z)}{\la z\ra^{1/2}}\frac{\dz}{|z|^2}.
$$
It follows that
\begin{equation*}
\bla \D^{\frac{1}{2}}(f_1 f_2)(x)-f_1(x)\D^{\frac{1}{2}}f_2(x)\bra
\lesssim\int_{\xR^2}\frac{\la f_1(x)-f_1(x-z)\ra\la f_2(x-z)\ra}{|z|^{1/2}}\frac{\dz}{|z|^2}\cdot
\end{equation*}	
Consequently, by setting
$$
I_0\defn \underset{\xR^2\times\xR^2}{\iint}
\Phi\left(\ca\cdot\nabla_x f(x)\right) \la h(x)-h(x-\alpha)\ra^2\frac{\dalpha\dx}{|\alpha|^3},
$$
we get that
$$
\la\langle P(f)g,\D g\rangle-I_0 \ra \lesssim \underset{(\xR^2)^3}{\iiint}  \la \delta_z \Phi\left(\ca\cdot\nabla_x f\right)\ra
\bla\delta_\alpha\D^{-\frac{1}{2}}h(x-z)\bra 
\la\delta_\alpha h(x)\ra   \frac{\dz\dalpha\dx}{|z|^{\frac{5}{2}}|\alpha|^3}\cdot
$$ 
Directly from the definitions of $\Phi$ and $g$, we notice that
$$
I_0\ge \underset{\xR^2\times\xR^2}{\iint} \frac{1}{\langle \nabla f(x)\rangle^3}
\left(\frac{\D^{\mez}g(x)-\D^{\mez}g(y)}{\la x-y\ra ^\mez}\right)^2\frac{\dx\dy}{|x-y|^2}.
$$
Consequently, it remains only to prove that
\be\label{claim:P(f)}
\la	\langle P(f)g,\D g\rangle-I_0 \ra 
\lesssim  \lA f\rA_{\dot H^{\frac{9}{4}}}\lA h\rA_{\dot H^{\frac{3}{8}}}^2.
\ee

To do so, we begin by using the elementary estimate 
$$
\la \delta_z \Phi\left(\ca\cdot\nabla_x f\right)\ra\les  \la\nabla_x \delta_zf\ra.
$$
By combining this with 
the Cauchy-Schwarz inequality, we obtain
\begin{multline*}
\la	\langle P(f)g,\D g\rangle-I_0 \ra \\ 
\lesssim  \iint \left(\int  | \nabla_x \delta_zf(x)|^2\frac{\dz}{|z|^{\frac{7}{2}}} \right)^{\frac{1}{2}}
\left(\int  \bla \delta_\alpha\D^{-\frac{1}{2}}h(x-z)\bra^2\frac{\dz}{|z|^{\frac{3}{2}}} \right)^{\frac{1}{2}}
|\delta_\alpha h(x)|\dx
\frac{\dalpha }{|\alpha|^3}\cdot
\end{multline*}
Using the H\"older inequality $L^4\cdot L^4\cdot L^2\subset L^2$, this gives
\begin{equation*}
\la	\langle P(f)g,\D g\rangle-I_0 \ra 
\lesssim \lA\nabla_x f\rA_{\dot{F}^{\frac{3}{4}}_{4,2}}
\int\blA\mathbf{I}_{\frac{1}{2}}\big(|\delta_\alpha\D^{-\frac{1}{2}}h|^2\big)\brA_{L^2}^{\frac{1}{2}}
\lA\delta_\alpha h\rA_{L^2}  \frac{\dalpha }{|\alpha|^3},
\end{equation*}
where we used the notation
$$
\lA\nabla_x f\rA_{\dot{F}^{\frac{3}{4}}_{4,2}}=
\bigg(\int_{\xR^2}\bigg(\int_{\xR^2}\la \delta_z \nabla_x f(x)\ra^2\frac{\dz}{\la z\ra^{2+2\frac{3}{4}}}\bigg)^2\dx\bigg)^\frac14.
$$
It follows from the estimate~\e{tech:Triebel} that
$$
\lA\nabla_x f\rA_{F^{\frac{3}{4}}_{4,2}}\les \lA f\rA_{\dot H^{\frac{9}{4}}}.
$$
On the other hand, the classical estimate for Riesz potentials (see~\e{Riesz}) implies that
$$
\blA\mathbf{I}_{\frac{1}{2}}\big(|\delta_\alpha\D^{-\frac{1}{2}}h|^2\big)\brA_{L^2}^{1/2}
\lesssim \brA\delta_\alpha\D^{-\frac{1}{2}}h\brA_{L^{\frac{8}{3}}}\lesssim \brA\delta_\alpha\D^{-\frac{1}{4}}h\brA_{L^{2}}.
$$
It follows that
\begin{align*}
&\la	\langle P(f)g,\D g\rangle-I_0 \ra \\
&\qquad\lesssim \lA f\rA_{\dot H^{\frac{9}{4}}} \int_{\xR^2}  \brA\delta_\alpha\D^{-\frac{1}{4}}h\brA_{L^{2}}
\lA\delta_\alpha h\rA_{L^2}  \frac{\dalpha }{|\alpha|^3}\\
&\qquad\lesssim \lA f\rA_{\dot H^{\frac{9}{4}}}
\left(\int _{\xR^2} \brA\delta_\alpha\D^{-\frac{1}{4}}h\brA_{L^{2}}^2 \frac{\dalpha }{|\alpha|^{2+\frac{7}{4}}}\right)^{\frac{1}{2}}
\left( \int _{\xR^2}  \lA\delta_\alpha h\rA_{L^2}^2  \frac{\dalpha }{|\alpha|^{2+\frac{3}{4}}}\right)^{\frac{1}{2}}\\
&\qquad\lesssim \lA f\rA_{\dot H^{\frac{9}{4}}}\lA h\rA_{\dot H^{\frac{3}{8}}}^2
\end{align*}
where we have used the elementary estimate~\e{tech:1} to obtain the last inequality. 
This implies the wanted estimate~\e{claim:P(f)} 
since $\lA h\rA_{\dot H^{\frac{3}{8}}}\les \lA g\rA_{\dot{H}^{\frac{7}{8}}}$. 
This completes the proof of the proposition.
\end{proof}

\subsection{The remainder term}\label{S:4.3}
It remains to estimate the remainder term $R(f,g)$ which is given Proposition~\ref{P:3.1}. 
\begin{proposition}\label{Z19}
There exists a positive constant $C$ such that, for all 
functions $f$ and $g$ in $H^\infty(\xR^2)$,
\begin{equation}\label{Z14}
\Vert R(f,g)\Vert _{L^2}\le C \lA f\rA_{\dot H^{\frac{9}{4}}}\lA g\rA_{\dot H^{\frac{3}{4}}}.
\end{equation}
In particular, there holds
\begin{equation}\label{Z15}
\blA R(f,\D^{\frac{3}{2},\phi}f)\brA_{L^2}\le C\lA f\rA_{\dot H^{\frac{9}{4}}}\Vert \D^{\frac{9}{4},\phi}f\Vert _{L^2}.
\end{equation}
\end{proposition}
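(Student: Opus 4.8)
The plan is to estimate $R(f,g)$ directly from the pointwise representation \e{m7}--\e{m7est} of Proposition~\ref{P:3.1}, and then to deduce \e{Z15} by taking $g=\D^{\frac{3}{2},\phi}f$. Using the identities $\Delta_\a f(x)-\ca\cdot\nabla_x f(x)=|\a|^{-1}\big(\delta_\a f(x)-\a\cdot\nabla_x f(x)\big)$ and $\nabla_x(\delta_\a f)(x)=\delta_\a(\nabla_x f)(x)$, the bound \e{m7est} yields $\la R(f,g)(x)\ra\les R_a(x)+R_b(x)$ with
\[
R_a(x)=\int_{\xR^2}\frac{\la \delta_\a f(x)-\a\cdot\nabla_x f(x)\ra\,\la\delta_\a g(x)\ra}{|\a|^{4}}\,\dalpha,\qquad R_b(x)=\int_{\xR^2}\frac{\la\delta_\a(\nabla_x f)(x)\ra\,\la\delta_\a g(x)\ra}{|\a|^{3}}\,\dalpha,
\]
both integrals being absolutely convergent pointwise for $f,g\in H^\infty(\xR^2)$. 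It therefore suffices to bound each of $\lA R_a\rA_{L^2}$ and $\lA R_b\rA_{L^2}$ by $\lA f\rA_{\dot H^{\frac{9}{4}}}\lA g\rA_{\dot H^{\frac{3}{4}}}$.

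For $R_a$, I would split $|\a|^{-4}=|\a|^{-\frac{11}{4}}|\a|^{-\frac54}$, apply the Cauchy--Schwarz inequality in the $\a$ variable, and then the Cauchy--Schwarz inequality in the $x$ variable, to arrive at
\[
\lA R_a\rA_{L^2}^2\le\lA\,\int_{\xR^2}\frac{\la\delta_\a f-\a\cdot\nabla_x f\ra^2}{|\a|^{\frac{11}{2}}}\,\dalpha\,\rA_{L^2}\cdot\lA\,\int_{\xR^2}\frac{\la\delta_\a g\ra^2}{|\a|^{\frac52}}\,\dalpha\,\rA_{L^2}.
\]
Each of the two factors is then estimated by Minkowski's inequality \e{Min}, followed by the Sobolev embedding $\dot H^{\frac12}(\xR^2)\hookrightarrow L^4(\xR^2)$ (see \e{Sobolev}) and Lemma~\ref{L:technical}. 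Concretely, the first factor is bounded by $\int_{\xR^2}\lA\delta_\a f-\a\cdot\nabla_x f\rA_{L^4}^2|\a|^{-\frac{11}{2}}\,\dalpha\les\int_{\xR^2}\lA\delta_\a f-\a\cdot\nabla_x f\rA_{\dot H^{\frac12}}^2|\a|^{-\frac{11}{2}}\,\dalpha\les\lA f\rA_{\dot H^{\frac94}}^2$, the last inequality being \e{tech:2} with $\gamma=1$, $a=\frac12$, $b=\frac74$ (admissible since $1<\frac74<2$ and $a+b/\gamma=\frac94$); the second factor is bounded in the same way by $\int_{\xR^2}\lA\delta_\a g\rA_{\dot H^{\frac12}}^2|\a|^{-\frac52}\,\dalpha\les\lA g\rA_{\dot H^{\frac34}}^2$ using \e{tech:1} with $a=\frac12$, $b=\frac14$. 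Hence $\lA R_a\rA_{L^2}\les\lA f\rA_{\dot H^{\frac94}}\lA g\rA_{\dot H^{\frac34}}$.

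The term $R_b$ is dealt with identically, this time splitting $|\a|^{-3}=|\a|^{-\frac74}|\a|^{-\frac54}$: the factor carrying $\delta_\a g$ is precisely the one already estimated, while the factor carrying $\delta_\a(\nabla_x f)$ is bounded, again by \e{Min}, the Sobolev embedding and \e{tech:1} (with $a=\frac12$, $b=\frac34$), by $\int_{\xR^2}\lA\delta_\a(\nabla_x f)\rA_{\dot H^{\frac12}}^2|\a|^{-\frac72}\,\dalpha\les\lA\nabla_x f\rA_{\dot H^{\frac54}}^2=\lA f\rA_{\dot H^{\frac94}}^2$. This establishes \e{Z14}. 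Then \e{Z15} is obtained by applying \e{Z14} with $g=\D^{\frac{3}{2},\phi}f$ and observing that Plancherel's identity gives $\lA\D^{\frac{3}{2},\phi}f\rA_{\dot H^{\frac34}}^2=\int_{\xR^2}|\xi|^{\frac92}\phi(|\xi|)^2\la\hat f(\xi)\ra^2\,\dxi=\lA\D^{\frac{9}{4},\phi}f\rA_{L^2}^2$.

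The only delicate point is the bookkeeping of the powers of $|\a|$: the portion allotted to the $g$-difference must be small enough that the resulting Gagliardo-type square function is controlled by $\lA g\rA_{\dot H^{\frac34}}$ (so that \e{tech:1} is applied with an exponent lying in $(0,1)$), while the portion allotted to $\delta_\a f-\a\cdot\nabla_x f$ must be large enough to feel both orders of cancellation through \e{tech:2} (an exponent lying in $(1,2)$); it is exactly the target regularity $\dot H^{\frac94}$ for $f$ that makes these two requirements simultaneously satisfiable.
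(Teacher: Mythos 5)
Your proof is correct and follows essentially the same route as the paper's: both split the power $|\alpha|^{-3}$ into $|\alpha|^{-7/4}\cdot|\alpha|^{-5/4}$ (equivalently your $|\alpha|^{-11/4}\cdot|\alpha|^{-5/4}$ once the extra $|\alpha|^{-1}$ from $\Delta_\alpha$ is factored in), use the Sobolev embedding $\dot H^{1/2}(\xR^2)\hookrightarrow L^4(\xR^2)$, and conclude via \eqref{tech:1}/\eqref{tech:2}, differing only in the order in which Minkowski, H\"older and Cauchy--Schwarz are invoked. The exponent check ($\gamma=1$, $a=\tfrac12$, $b=\tfrac74$ for \eqref{tech:2}; $a=\tfrac12$, $b\in\{\tfrac14,\tfrac34\}$ for \eqref{tech:1}) and the Plancherel identity giving \eqref{Z15} are all in order.
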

\begin{proof}
Recall that 
the remainder term $R(f,g)$ has the form 
$$
R(f,g)(x)=\frac{1}{2\pi}\int_{\xR^2}  M_\a(x)\delta_{\alpha}g(x) \dalpha,
$$
for some symbol $M_\a$ satisfying the following point-wise bound:
$$
\la M_\a(x)\ra \le   
\frac{6}{|\a|^3}\la \Delta_\a f(x)- \ca\cdot\nabla_xf(x)\ra +\frac{3}{|\a|^3}\la \nabla_x(\delta_\a f)\ra.
$$
By using successively the 
Minskowski, H\"older, Cauchy-Schwarz 
and Sobolev inequalities, we deduce that
\begin{align*}
\lA R(f,g)\rA_{L^2}
&\lesssim   \int_{\xR^2} \frac{1}{|\a|}\Vert  M_\alpha\Vert_{L^4}\Vert\delta_{\alpha}g\Vert_{L^4} \frac{\dalpha}{|\a|^2}\\
&\lesssim  \bigg( \int_{\xR^2} \lA M_\a\rA_{L^4}^2 \frac{ \dalpha}{|\alpha|^{\frac{7}{2}}}\bigg)^{\frac{1}{2}} 
\bigg(\int_{\xR^2}\blA\delta_{\alpha}\D^\mez g\brA_{L^2}^2\frac{ \dalpha}{|\alpha|^{\frac{5}{2}}}\bigg)^{\frac{1}{2}}\\
&\lesssim  \bigg( \int_{\xR^2} \lA M_\a\rA_{L^4}^2 \frac{ \dalpha}{|\alpha|^{\frac{7}{2}}}\bigg)^{\frac{1}{2}}
\lA g\rA_{\dot H^{\frac{3}{4}}}
\end{align*}
where we have used~\e{Gagliardo} to obtain the last inequality. 

Now, by H\"older inequality, we have
\begin{align*}
\int_{\xR^2}  \lA M_\a\rA_{L^4}^2 \frac{ \dalpha}{|\alpha|^{\frac{7}{2}}}
&\lesssim \int_{\xR^2} \lA\nabla_x\delta_{\alpha}f\rA_{L^4}^2
+\lA \Delta_{\alpha} f-\ca\cdot \nabla_x f\rA_{L^4}^2
\frac{\dalpha}{|\alpha|^{\frac{7}{2}}}\\
&\lesssim   \int_{\xR^2} \lA\delta_{\alpha}f\rA_{\dot H^{\frac{3}{2}}}^2
+\lA\Delta_{\alpha} f-\ca\cdot\nabla_x f\rA_{\dot H^{\mez}}^2
\frac{\dalpha}{|\alpha|^{\frac{7}{2}}}
=c\lA f\rA_{\dot H^{\frac{9}{4}}}^2
\end{align*}
where the last equation is obtained by using Plancherel's identity. This completes the proof.
\end{proof}

\subsection{The commutator estimate}\label{S:4.4}

\begin{proposition}\label{Z18}
There exists a positive constant $C$ such that, for all 
function $f$ in $H^\infty(\xR^2)$,
\begin{align*}
&\blA \big[ \D^{\frac{3}{2},\phi},\mathcal{L}(f)\big](f)\brA_{L^2}\\
&\qquad\qquad\le C
\lA f\rA_{\dot H^{\frac{9}{4}}}\Big(\blA \D^{\frac{9}{4},\phi}f\brA_{L^2}\\
&\qquad\qquad\qquad\qquad\qquad+\lA f\rA_{\dot{H}^{\frac{17}{8}}}\blA \D^{\frac{17}{18},\phi}f\brA_{L^2}+
\lA f\rA_{\dot{H}^{\frac{25}{12}}}^2\blA \D^{\frac{25}{12},\phi}f\brA_{L^2}\Big).
\end{align*}
\end{proposition}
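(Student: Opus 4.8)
The plan is to combine the quasilinearization formula of Proposition~\ref{P:3.1} with the finite-difference representation~\e{dsphi} of $\D^{3/2,\phi}$. Since $\D^{3/2,\phi}$ is a Fourier multiplier it commutes with $\nabla_x$ and with the translations $\delta_\alpha$, so writing $\mathcal{L}(f)g=P(f)g+V(f)\cdot\nabla_xg+R(f,g)$ gives
\[
\big[\D^{3/2,\phi},\mathcal{L}(f)\big]f=\big[\D^{3/2,\phi},P(f)\big]f+\big[\D^{3/2,\phi},V(f)\big]\cdot\nabla_xf+\big(\D^{3/2,\phi}R(f,f)-R(f,\D^{3/2,\phi}f)\big),
\]
where $[\D^{3/2,\phi},V(f)]$ is the commutator of $\D^{3/2,\phi}$ with multiplication by the vector field $V(f)$. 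I would estimate the three pieces separately. Before touching $P(f)$ it is useful to subtract the linear operator $\D$, which is $P(f)$ with the coefficient $\langle\ca\cdot\nabla_xf\rangle^{-3}$ replaced by $1$; as $\D$ and $\D^{3/2,\phi}$ commute, $[\D^{3/2,\phi},P(f)]f=[\D^{3/2,\phi},P(f)-\D]f$, and the coefficient $\langle\ca\cdot\nabla_xf\rangle^{-3}-1$ now vanishes to second order in $\nabla_xf$. Likewise $V(f)$ vanishes to second order in $f$ (since $F'(0)=0$ for $F(\tau)=\langle\tau\rangle^{-3}$), while the kernel $M_\alpha$ of $R(f,\cdot)$ is \emph{linear} in $f$. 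This already matches the three terms on the right of the claimed inequality: the $R$-commutator produces the quadratic term $\lA f\rA_{\dot H^{9/4}}\blA\D^{9/4,\phi}f\brA_{L^2}$, the $P$- and $V$-commutators the cubic term, and the next Taylor corrections of $\langle\ca\cdot\nabla_xf\rangle^{-3}$ and of $V(f)$ the quartic term.

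For each piece I would insert~\e{dsphi}, turning the commutator into an integral over the increment $h$ supplied by $\D^{3/2,\phi}$ — weighted by $\kappa(1/|h|)\,|h|^{-7/2}$ — against the already present $\alpha$-integral of $P(f)-\D$ or of $R(f,\cdot)$ (for the $V$-term there is only the $h$-integral). When the second-order difference in $h$ falls only on the argument, the corresponding terms reassemble, using that $\D^{3/2,\phi}$ commutes with $\delta_\alpha$, into exactly the second term of the commutator, and cancel; what remains is, schematically,
\[
\iint_{\xR^2\times\xR^2}\big[\delta_h(\text{coefficient})\big](x)\,\big(\delta_\alpha f\big)(x\pm h)\,\kappa\!\Big(\tfrac{1}{|h|}\Big)\,\frac{\dh}{|h|^{7/2}}\,\frac{\dalpha}{|\alpha|^3},
\]
plus the single-integral variant with $\delta_\alpha f(x\pm h)$ replaced by $\nabla_xf(x\pm h)$ for the $V$-commutator. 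Since $F$ and its first two derivatives are bounded, even and Lipschitz, the $h$-difference of each coefficient is bounded by finite differences of $\nabla_xf$ times a lower-degree factor (for $R$ one differences~\e{m7est}, which turns $\Delta_\alpha f-\ca\cdot\nabla_xf$ into the Taylor remainder of $\delta_hf$, and for $V$ one re-uses the \emph{identity} decomposition~\e{decomposition:V} of $\nabla_xV(f)$ so as not to lose a derivative). I would then close the integrals with Minkowski's inequality, Hölder's inequality, the Sobolev embedding $\dot H^{1/2}(\xR^2)\hookrightarrow L^4(\xR^2)$ and the Hardy–Littlewood–Sobolev inequality~\e{Riesz}, together with the finite-difference estimates of Section~\ref{S:2}: the unweighted bounds of Lemma~\ref{L:technical} (in particular~\e{tech:Triebel},~\e{tech:2},~\e{tech:3}) for the factors built only from $\nabla_xf$, and the $\kappa^{2\gamma}$-weighted double-difference bounds~\e{tech:5},~\e{tech:4}, via the equivalence~\e{n60b}, for whichever factor carries the weight.

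The main obstacle is the $P(f)$-commutator. Its $\alpha$-kernel $\delta_\alpha f/|\alpha|^3$ is only conditionally convergent, and after differencing the coefficient one is left not with a mixed difference $\delta_\alpha\delta_hf$ (to which~\e{tech:5} would apply directly) but with the \emph{product} of an $h$-difference of the coefficient and an $\alpha$-difference of $f$; taking absolute values then makes the $\alpha$-integral diverge logarithmically near $\alpha=0$. I would recover the missing cancellation by hand: writing $\alpha=r\theta$ and pairing $\theta$ with $-\theta$, the evenness of $\langle\ca\cdot\nabla_xf\rangle^{-3}$ turns $\delta_{r\theta}f$ into the second-order difference $2f-f(\cdot-r\theta)-f(\cdot+r\theta)=O(r^2)$, which restores absolute convergence. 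I would then split the $(h,\alpha)$-domain into $|h|\lesssim|\alpha|$ and $|h|\gtrsim|\alpha|$ and, on each region, use Lemma~\ref{L:2.6} to slide the slowly growing weight $\kappa(1/|h|)$ onto the factor of highest regularity at the cost of a small power of $|h|/|\alpha|$ or $|\alpha|/|h|$, before invoking~\e{tech:5}/\e{tech:4} and~\e{Riesz}. The $R$-commutator is the most demanding in terms of regularity — it is where one must produce the full weighted norm $\blA\D^{9/4,\phi}f\brA_{L^2}$ on one factor — but it is structurally milder, since $M_\alpha$ already contains the Taylor-remainder factor $\Delta_\alpha f-\ca\cdot\nabla_xf$ that supplies the needed decay in $\alpha$. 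Finally, the $V$-commutator is the mildest once~\e{decomposition:V} is available, its only extra point being that the quartic remainder of $V(f)$ must be routed through the $\dot H^{17/8}$- and $\dot H^{25/12}$-type bounds of \S\ref{S:4.1} to yield $\lA f\rA_{\dot{H}^{25/12}}^2\blA\D^{25/12,\phi}f\brA_{L^2}$.
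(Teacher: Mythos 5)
Your decomposition of the commutator is set up correctly, and the overall plan is sound, but it takes a genuinely different route from the paper. The paper \emph{never} inserts the quasilinearization of Proposition~\ref{P:3.1} into the commutator. Instead it commutes $\D^{3/2,\phi}$ directly with the contour-integral representation $\Lr(f)f=-\frac{1}{2\pi}\int F_\a G_\a\,\frac{\dalpha}{|\a|^2}$ (with $F_\a=\langle\Delta_\a f\rangle^{-3}$, $G_\a=\a\cdot\nabla_x\Delta_\a f$), applying the Leibniz-type defect $\Gamma_\a:=\D^{3/2,\phi}(F_\a G_\a)-F_\a\D^{3/2,\phi}G_\a-G_\a\D^{3/2,\phi}F_\a$ to each $\a$. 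Because $G_\a=O(|\a|)$ near $0$, the $\a$-measure $\frac{\dalpha}{|\a|^2}$ stays absolutely convergent after taking moduli, and the whole argument reduces to estimating (I) $\int\|G_\a\D^{3/2,\phi}F_\a\|_{L^2}\frac{\dalpha}{|\a|^2}$ via a careful bound on $\|\D^{3/2,\phi}F_\a\|_{\dot H^{1/2}}$ (expansion of $\delta_h^2\nabla_x F_\a$ into four pieces $q_1,\dots,q_4$, producing the quadratic/cubic/quartic terms) and (II) $\Gamma_\a$, which by the identity $s_h(uv)-u\,s_hv-v\,s_hu=(\delta_hu)(\delta_hv)-(\delta_{-h}u)(\delta_{-h}v)$ carries a mixed double difference $\delta_h\delta_\a f$ and goes straight into~\eqref{tech:5bis}.

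Your route would still need to establish the same hard estimate — it just appears in a slightly different disguise — while paying three extra costs the paper avoids. First, commuting with $P(f)$ destroys the good $\alpha$-decay that $G_\a$ automatically carries, forcing the $\pm\alpha$ symmetrization you describe. That fix is valid (the coefficient $\langle\ca\cdot\nabla_xf\rangle^{-3}$ is even in $\alpha$, so pairing $\alpha$ with $-\alpha$ does produce the second difference $2g-g(\cdot-\alpha)-g(\cdot+\alpha)$), but it is extra bookkeeping that the paper's grouping renders unnecessary. Second, for the $R$-piece you cannot simply ``difference~\eqref{m7est}'': a pointwise bound on $|M_\a|$ does not control $|\delta_hM_\a|$. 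You would have to go back to the explicit formula for $M_\a$ from the proof of Proposition~\ref{P:3.1} (involving $\langle\Delta_\a f\rangle^{-3}-\langle\ca\cdot\nabla_xf\rangle^{-3}$, $\tfrac{\Delta_\a f}{\langle\Delta_\a f\rangle^5}(\Delta_\a f-\ca\cdot\nabla_xf)$, etc.) and difference that, which is doable but not for free. Third, the weighted commutator estimate for $[\D^{3/2,\phi},V(f)]$ that you need is not the unweighted one of Proposition~\ref{X1}; re-deriving it in the $\D^{s,\phi}$-scale from~\eqref{decomposition:V} is plausible but is a separate lemma. None of these is a fatal obstruction, so the approach is workable, but the paper's choice of commuting \emph{before} quasilinearizing is noticeably leaner: it keeps $F_\a$ and $G_\a$ together, retains absolute convergence in $\alpha$ throughout, and makes the Leibniz defect $\Gamma_\a$ the only commutator object one actually has to estimate.

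One small independent remark: the exponent $\tfrac{17}{18}$ in the statement of Proposition~\ref{Z18} (and inside the proof of the paper, in \eqref{Z11} and the estimates for $Q_2,Q_3$) is a typographical error for $\tfrac{17}{8}$, consistent with the interpolation $A^{3/8}B^{1/8}$ used in the final step of Theorem~\ref{mainthm0}. Your heuristic quadratic/cubic/quartic matching of the three right-hand terms tacitly uses the corrected exponent.
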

\begin{proof}

Recall that by definition
$$
\mathcal{L}(f)f
=-\frac{1}{2\pi}\int_{\mathbb{R}^2}F_\alpha(x)G_\a(x)\frac{\dalpha}{|\alpha|^2}
\quad\text{where}\quad
\left\{
\begin{aligned}
F_\alpha&=\frac{1}{\langle \Delta_\alpha f(x)\rangle^{3}}, \\
G  _\a&=\a\cdot\nabla_x\Delta_\alpha f(x).
\end{aligned}
\right.
$$
Let us introduce 
$$
\Gamma_\alpha\defn\D^{\frac{3}{2},\phi}\left[F_\alpha G_\alpha \right]-F_\alpha\D^{\frac{3}{2},\phi}
G_\a - G_\a\D^{\frac{3}{2},\phi}\left[F_\alpha\right].
$$
So, 
\begin{align*}
&\blA \big[ \D^{\frac{3}{2},\phi},\mathcal{L}(f)\big](f)\brA_{L^2} \lesssim (I)+(II) \quad\text{where},\\
&(I)\defn \bigg(\int\left(	\int G_\alpha(x)\D^{\frac{3}{2},\phi}F_\alpha(x)\frac{\dalpha}{|\alpha|^2}\right)^2\dx\bigg)^\mez,\\
&(II)\defn
\bigg(\int\left(\int \Gamma_\alpha(x)\frac{\dalpha}{|\alpha|^2}\right)^2\dx\bigg)^\mez.
\end{align*}
We will prove that 
\begin{align}
(I)&\lesssim 
\lA f\rA_{\dot H^{\frac{9}{4}}}\Big(\blA \D^{\frac{9}{4},\phi}f\brA_{L^2}\label{Z11}\\
&\qquad\qquad\qquad+\lA f\rA_{\dot{H}^{\frac{17}{8}}}\blA \D^{\frac{17}{18},\phi}f\brA_{L^2}+\blA f\brA_{\dot{H}^{\frac{25}{12}}}^2\blA \D^{\frac{25}{12},\phi}f\brA_{L^2}\Big),\notag\\
(II)&\lesssim  \lA f\rA_{\dot H^{\frac{9}{4}}}\blA \D^{\frac{9}{4},\phi}f\brA_{L^2}.\label{Z13}
\end{align}

\textit{Step 1:} We prove \eqref{Z11}. 
Starting from Minkowski's inequality, write
$$
(I)=\lA \int G_\alpha \D^{\frac{3}{2},\phi}F_\alpha \frac{\dalpha}{|\alpha|^2}\rA_{L^2}
\le \int \lA G_\alpha \D^{\frac{3}{2},\phi}F_\alpha  \rA_{L^2} \frac{\dalpha}{|\alpha|^2}.
$$
Since the product is continuous from $\dot{H}^{\mez}(\xR^2)\times \dot{H}^{\mez}(\xR^2)$ to $L^2(\xR^2)$ 
(in view of the H\"older inequality $L^4\cdot L^4\hookrightarrow L^2$ and the Sobolev embedding 
$\dot{H}^{\mez}(\xR^2)\hookrightarrow L^4(\xR^2)$), this gives
\begin{align*}
(I)&\les \int \lA G_\alpha\rA_{\dot{H}^{\mez}} \blA \D^{\frac{3}{2},\phi}F_\alpha \brA_{\dot{H}^{\mez}} \frac{\dalpha}{|\alpha|^2}\\
&\les \bigg( \int \lA G_\alpha\rA_{\dot{H}^{\mez}}^2\frac{\dalpha}{|\alpha|^{7/2}}\bigg)^\mez
\bigg( \int \blA \D^{\frac{3}{2},\phi}F_\alpha \brA_{\dot{H}^{\mez}}^2 \frac{\dalpha}{|\alpha|^{1/2}}\bigg)^\mez.
\end{align*}
The first factor is estimated directly from the definition of $G_\a$. Indeed, using Plancherel's identity, one has
\begin{align*}
\int \lA G_\alpha\rA_{\dot{H}^{\mez}}^2\frac{\dalpha}{|\alpha|^{7/2}}= \int |\xi|\left|\alpha.\xi \frac{1-e^{i\alpha.\xi}}{|\alpha|}\right|^2 \frac{\dalpha}{|\alpha|^{7/2}}|\hat f(\xi)|^2d\xi= c\lA f\rA_{\dot H^{\frac{9}{4}}}^2. 
\end{align*}
The analysis of the second factor is more difficult. 
Set
$$
Q= \int \blA \D^{\frac{3}{2},\phi}F_\alpha \brA_{\dot{H}^{\mez}}^2 \frac{\dalpha}{|\alpha|^{1/2}}.
$$
We claim that $Q$ satisfies 
$$
Q\les \blA \D^{2,\phi}f\brA_{\dot{H}^{\frac{1}{4}}}^2+\lA f\rA_{\dot{H}^{\frac{17}{8}}}^2\blA \D^{\frac{17}{18},\phi}f\brA_{L^2}^2+ \blA f\brA_{\dot{H}^{\frac{25}{12}}}^4\blA \D^{\frac{25}{12},\phi}f\brA_{L^2}^2.
$$
This will imply the wanted result~\e{Z11}. 

Notice that 
$$
\blA \D^{\frac{3}{2},\phi}F_\alpha \brA_{\dot{H}^{\mez}}^2=\blA \D^{2,\phi}F_\alpha
 \brA_{L^2}^2=\blA \D^{1,\phi}\nabla_x F_\alpha \brA_{L^2}^2.
$$
On the other hand, there exists a positive constant $C$ such that, 
for any function $u\in {H}^{1,\phi}(\xR^2)$, there holds
\begin{multline}\label{equivalence:D1}
\frac{1}{C}\underset{\xR^2\times \xR^2}{\iint}\frac{\la \delta_h^2 u(x)\ra^2}{\la h\ra^2}\kappa^2\left(\frac{1}{|h|}\right)\frac{\dh}{|h|^2}\dx\\ 
\le 
\blA \D^{1,\phi} u \brA_{L^2}^2\le 
C\underset{\xR^2\times \xR^2}{\iint}\frac{\la \delta_h^2 u(x)\ra^2}{\la h\ra^2}\kappa^2\left(\frac{1}{|h|}\right)\frac{\dh}{|h|^2}\dx.
\end{multline}
To clarify notations, we recall that $\kappa^2(y)=(\kappa(y))^2$ and  $\delta_h^2 u=\delta_h(\delta_h u )$ where recall that $\delta_h g(x)=g(x)-g(x-h)$. 
It follows from the previous observations that
$$
Q\les \iiint \frac{\bla \delta_h^2 \nabla_x F_\alpha(x)\bra^2}{|h|^2}\kappa^2\left(\frac{1}{|h|}\right)\frac{\dx\dh}{|h|^2}\frac{\dalpha}{|\alpha|^{\frac{1}{2}}}\cdot
$$

We now have to bound $\bla \delta_h^2 \nabla_x F_\alpha(x)\bra^2$. To do so, we use the following analogue of the Leibniz rule:
$$
\delta_h(uv)=u(\delta_h v)+(\delta_h u)(\tau_h v) \quad\text{where}\quad 
\tau_hv(x)=v(x-h).
$$
With this property, 
it is easy to check that $\la \delta_h^2\nabla_x F_\a\ra\les q_1+\cdots+q_4$ where
\begin{align*}
&q_1=\la \delta_h^2\nabla_x\Delta_\a f\ra,\quad &&q_2=\la\delta_h^2\Delta_\a f\ra\la\tau_h^2\nabla_x\Delta_\a f\ra,\\
&q_3=
\la \delta_h\Delta_\a f\ra\la\tau_h\delta_h\nabla_x\Delta_\a f\ra,\quad &&q_4=\la \delta_h\tau_h \Delta_\a f\ra\la\delta_h\Delta_\a f\ra\la \tau_h^2\Delta_\a\nabla_x f\ra.
\end{align*}
It follows that $Q\les Q_1+\cdots +Q_4$ with
$$
Q_j=\iiint \frac{q_j^2(x,h,\a)}{|h|^2}\kappa^2\left(\frac{1}{|h|}\right)\frac{\dx\dh}{|h|^2}\frac{\dalpha}{|\alpha|^{\frac{1}{2}}}\qquad (1\le j\le 4).
$$

Using the equivalence~\e{equivalence:D1} and Plancherel's identity, 
we find that
\be\label{est:Q1}
Q_1\les \int\blA \D^{1,\phi}\nabla_x\Delta_\a f\brA_{L^2}^2\frac{\dalpha}{|\alpha|^{\frac{1}{2}}}=\int\blA \D^{2,\phi}\delta_\a f\brA_{L^2}^2\frac{\dalpha}{|\alpha|^{2+\frac{1}{2}}}
\sim \blA \D^{2,\phi}f\brA_{\dot{H}^{\frac{1}{4}}}^2.
\ee

$\bullet$ To estimate the term $Q_2$, we write
\begin{align*}
Q_2&=\iiint \frac{\la\delta_h^2\Delta_\a f\ra^2\la\tau_h^2\nabla_x\Delta_\a f\ra^2(x,h,\a)}{|h|^2}\kappa^2\left(\frac{1}{|h|}\right)\frac{\dx\dh}{|h|^2}\frac{\dalpha}{|\alpha|^{\frac{1}{2}}}\\
&\le \iint \lA\tau_h^2\nabla_x\Delta_\a f\rA_{L^4_x}^2\frac{\lA\delta_h^2\Delta_\a f\rA_{L^4_x}^2}
{|h|^2}\kappa^2\left(\frac{1}{|h|}\right)\frac{\dh}{|h|^2}\frac{\dalpha}{|\alpha|^{\frac{1}{2}}}\\
&\le \int \lA\nabla_x\Delta_\a f\rA_{\dot{H}^\mez_x}^2\bigg( \int\frac{\lA\delta_h^2\Delta_\a f\rA_{\dot{H}^\mez_x}^2}
{|h|^2}\kappa^2\left(\frac{1}{|h|}\right)\frac{\dh}{|h|^2}\bigg)\frac{\dalpha}{|\alpha|^{\frac{1}{2}}}\cdot
\end{align*}
Then we estimate the innermost integral by applying~\e{equivalence:D1} 
with $u=\D^\mez \Delta_\a f$. This implies that
\begin{align*}
Q_2&\les  \int  \lA\nabla_x\Delta_\a f\rA_{\dot{H}^\mez_x}^2 \blA \Delta_\a \D^{\tdm,\phi}f\brA_{L^2}^2\frac{\dalpha}{|\alpha|^{\frac{1}{2}}}
=\int  \frac{\lA\nabla_x\delta_\a f\rA_{\dot{H}^\mez_x}^2}{|\a|^2}\frac{\blA \delta_\a \D^{\tdm,\phi}f\brA_{L^2}^2}{|\a|^2}\frac{\dalpha}{|\alpha|^{\frac{1}{2}}}\\
&\le \bigg(\int  \lA\nabla_x\delta_\a f\rA_{\dot{H}^\mez_x}^4\frac{\dalpha}{|\alpha|^{2+4\frac{5}{8}}}\bigg)^\mez
\bigg(\int\blA \delta_\a \D^{\tdm,\phi}f\brA_{L^2}^4\frac{\dalpha}{|\alpha|^{2+4\frac{5}{8}}}\bigg)^\mez.
\end{align*}
Now, the elementary estimate~\e{tech:1} implies that
\begin{align*}
&\bigg(\int  \lA\nabla_x\delta_\a f\rA_{\dot{H}^\mez_x}^4\frac{\dalpha}{|\alpha|^{2+4\frac{5}{8}}}\bigg)^\mez
\les\lA \nabla_x f\rA_{\dot{H}^{\mez+\frac{5}{8}}}^2\le  \lA f\rA_{\dot{H}^{\frac{17}{8}}}^2,\\
&\bigg(\int\blA \delta_\a \D^{\tdm,\phi}f\brA_{L^2}^4\frac{\dalpha}{|\alpha|^{2+4\frac{5}{8}}}\bigg)^\mez\les 
\blA \D^{\tdm,\phi}f\brA_{\dot{H}^{\frac{5}{8}}}^2=\blA \D^{\frac{17}{18},\phi}f\brA_{L^2}^2.
\end{align*}
This proves that
$$
Q_2\les \lA f\rA_{\dot{H}^{\frac{17}{8}}}^2\blA \D^{\frac{17}{18},\phi}f\brA_{L^2}^2.
$$

$\bullet$ To estimate the term $Q_3$, we apply the Cauchy-Schwarz inequality (and we balance the powers of $h$ and $\a$ in an appropriate way), to obtain $Q_3\le I_3 J_3$ where
\begin{align*}
I_3&=\bigg(\iiint \frac{\la\tau_h\delta_h\nabla_x
\delta_\a f\ra^4}{|h|^2 \la\a\ra^2}\frac{\dx\dh}{|h|^2}\frac{\dalpha}{|\alpha|^{\frac{1}{2}}}\bigg)^\mez,\\
J_3&=\bigg(\iiint \frac{\la \delta_h\delta_\a f\ra^4}{|h|^2 \la\a\ra^6}
\kappa^4\left(\frac{1}{|h|}\right)\frac{\dx\dh}{|h|^{2}}\frac{\dalpha}{|\alpha|^{\frac{1}{2}}}\bigg)^\mez.
\end{align*}
Since $\dot{H}^\mez(\xR^2)\subset L^4(\xR^2)$, we have
$$
J_3\les \bigg(\iiint \frac{\lA \delta_h\delta_\a f\rA_{\dot{H}^{\mez}}^4}{|h|^2 \la\a\ra^6}\kappa^4\left(\frac{1}{|h|}\right)\frac{\dh}{|h|^{2}}\frac{\dalpha}{|\alpha|^{\frac{1}{2}}}\bigg)^\mez.
$$
Now recall that, for all $a\in [0,+\infty)$, all $\gamma\ge 1$ and all $b,c\in (0,\gamma)$, there exists $C>0$ such that
\be\label{tech:5bis}
\iint_{\xR^2\times\xR^2} \Vert \delta_\alpha \delta_{h}f\Vert _{\dot H^{a}}^{2\gamma}
\kappa^{2\gamma}\left(\frac{1}{|h|}\right)\frac{\dh}{|h|^{2+2b}}\frac{\dalpha}{|\alpha|^{2+2c}}
\le C \blA \D^{a+\frac{b+c}{\gamma},\phi}f\brA_{L^2}^2.
\ee
Consequently, by applying this estimate with 
$(a,b,c,\gamma)=(1/2,1,7/4,2)$,
$$
J_3\les \blA \D^{\frac{17}{8},\phi}f\brA_{L^2}^2.
$$
Similarly, by using~\e{tech:5bis} with $(a,b,c,\gamma)=(3/2,1,1/4,2)$ and $\kappa\equiv 1$, we verify that
$$
I_3\les \lA f\rA_{\dot{H}^{\frac{17}{8}}}^2.
$$
This proves that 
$$
Q_3\les \lA f\rA_{\dot{H}^{\frac{17}{8}}}^2\blA \D^{\frac{17}{18},\phi}f\brA_{L^2}^2.
$$

$\bullet$ It remains to estimate the term $Q_4$. 
Using H\"older's inequality ($L^8\cdot L^8\cdot L^4 \hookrightarrow L^2$) as well as the Sobolev's inequalities 
($\dot{H}^\mez(\xR^2)\hookrightarrow L^4(\xR^2)$ and $\dot{H}^{\frac34}(\xR^2)\hookrightarrow L^8(\xR^2)$, we obtain 
\begin{align*}
Q_4
&=\iiint
\frac{\la \delta_h\tau_h \Delta_\a f\ra^2\la\delta_h\Delta_\a f\ra^2\la \tau_h^2\Delta_\a\nabla_x f\ra^2}{|h|^2}\kappa^2\left(\frac{1}{|h|}\right)\frac{\dx\dh}{|h|^2}\frac{\dalpha}{|\alpha|^{\frac{1}{2}}}\\
&\le \iint \lA \delta_h\tau_h \Delta_\a f\rA_{L^8}^2
\lA\delta_h\Delta_\a f\rA_{L^8}^2\lA \tau_h^2\Delta_\a\nabla_x f\rA_{L^4}^2
\kappa^2\left(\frac{1}{|h|}\right)\frac{\dh}{|h|^{4}}\frac{\dalpha}{|\alpha|^{\frac{1}{2}}}\\
&\les \int \lA \Delta_\a\nabla_x f\rA_{\dot{H}^\mez}^2\bigg(\int \lA \delta_h \Delta_\a f\rA_{\dot{H}^\frac34}^4\kappa^2\left(\frac{1}{|h|}\right)\frac{\dh}{|h|^4}\bigg)\frac{\dalpha}{|\alpha|^{\frac{1}{2}}}.
\end{align*}
Now, the innermost integral is estimated by
\begin{align*}
&\int \lA \delta_h \Delta_\a f\rA_{\dot{H}^\frac34}^4\kappa^2\left(\frac{1}{|h|}\right)\frac{\dh}{|h|^4}\\
&\qquad\qquad\le \bigg(\int \lA \delta_h \Delta_\a f\rA_{\dot{H}^\frac34}^4 \frac{\dh}{|h|^{2+4\frac12}}\bigg)^\mez
\bigg(\int \lA \delta_h \Delta_\a f\rA_{\dot{H}^\frac34}^4 \kappa^4\left(\frac{1}{|h|}\right)\frac{\dh}{|h|^{2+4\frac12}}
\bigg)^\mez\\
&\qquad\qquad\les \lA \Delta_\a f\rA_{\dot{H}^\frac54}^2\blA \D^{\frac54,\phi}\Delta_\a f\brA_{L^2}^2.
\end{align*}
Consequently,
\begin{align*}
Q_4&\les \int \lA \Delta_\a  \nabla_xf\rA_{\dot{H}^\mez}^2\lA \Delta_\a  f\rA_{\dot{H}^\frac54}^2
\blA \D^{\frac54,\phi}\Delta_\a f\brA_{L^2}^2\frac{\dalpha}{|\alpha|^{\frac{1}{2}}}\\
&\le\int \lA \delta_\a  f\rA_{\dot{H}^\tdm}^2\lA \delta_\a  f\rA_{\dot{H}^\frac54}^2
\blA \D^{\frac54,\phi}\delta_\a f\brA_{L^2}^2\frac{\dalpha}{|\alpha|^{6+\frac{1}{2}}}.
\end{align*}
It follows that $Q_4$ is bounded by
$$
\bigg[\int \lA \delta_\a  f\rA_{\dot{H}^\tdm}^8\frac{\dalpha}{|\a|^{2+8\frac{7}{12}}}\bigg]^\frac14
 \bigg[\int \lA \delta_\a  f\rA_{\dot{H}^\frac54}^8\frac{\dalpha}{|\a|^{2+8\frac{10}{12}}}\bigg]^\frac14
\bigg[\int \blA \D^{\frac54,\phi}\delta_\a f\brA_{L^2}^4\frac{\dalpha}{|\alpha|^{2+4\frac{10}{12}}}\bigg]^\mez.
$$
It follows from Lemma~\ref{L:technical}
that,
$$
Q_4\les \blA f\brA_{\dot{H}^{\frac{25}{12}}}^4\blA \D^{\frac{25}{12},\phi}f\brA_{L^2}^2.
$$
The estimate \eqref{Z11} is proven.

\textit{Step 2:} We now move to the proof of the estimate~\e{Z13}. 
We want to estimate the term 
$$
\Gamma_\alpha\defn\D^{\frac{3}{2},\phi}\left[F_\alpha G_\alpha \right]-F_\alpha\D^{\frac{3}{2},\phi}
G_\a - G_\a\D^{\frac{3}{2},\phi}\left[F_\alpha\right].
$$
The key point will be to exploit a representation of the operator $\D^{\frac{3}{2},\phi}$ in terms of finite differences. 
To do so, given $h\in \xR^2$, 
let us introduce the finite difference operator $s_h$ defined by
$$
(s_h f)(x)=2f(x)-f(x-h)-f(x+h).
$$
With this notation, the identity \e{dsphi} reads
\begin{equation*}
\D^{\frac{3}{2},\phi}g(x)=\int\frac{s_hg(x)}{|h|^{3/2}}\kappa\left(\frac{1}{|h|}\right)\frac{\dh}{|h|^2}\cdot
\end{equation*}
Now, it is easily verified that for any functions $u$ and $v$,
$$
s_h(uv)-u(s_h v)-v(s_hu)=(\delta_h u)(\delta_{h}v)-(\delta_{-h} u)(\delta_{-h}v).
$$
Hence, we obtain that
\begin{align*}
\la\Gamma_\alpha(x)\ra&\les \int \la \delta_h F_\a(x)\ra\la \delta_{h}G_\a(x)\ra
\kappa\left(\frac{1}{|h|}\right)\frac{\dh}{|h|^{2+\tdm}}\\
&\lesssim |\alpha|\int  \left|\Delta_\alpha \delta_{h}f(x)\right|
\left|\nabla_x\Delta_\alpha \delta_{h}f(x)\right|
\kappa\left(\frac{1}{|h|}\right)\frac{\dh}{|h|^{2+\tdm}}\cdot
\end{align*}
So, using successively Minkowski's inequality, H\"older's inequality and 
Sobolev's inequality, we find that
\begin{align*}
(II)&\lesssim\left( \iint \Vert \Delta_\alpha \delta_{h}f\Vert _{L^8}
\Vert \nabla_x\Delta_\alpha \delta_{h}f\Vert _{L^{\frac{8}{3}}}
\kappa\left(\frac{1}{|h|}\right)\frac{\dh}{|h|^{2+\tdm}}\frac{\dalpha}{|\alpha|}\right)^2
\\
&\lesssim\left( \iint \Vert \Delta_\alpha \delta_{h}f
\Vert _{\dot H^{\frac{3}{4}}}\Vert \Delta_\alpha \delta_{h}f\Vert _{\dot H^{\frac{5}{4}}}
\kappa\left(\frac{1}{|h|}\right)\frac{\dh}{|h|^{2+\tdm}}
\frac{\dalpha}{|\alpha|}\right)^2.
\end{align*}	
By using the Cauchy-Schwarz inequality, it follows that $(II)$ is bounded by 
\begin{align*}
C\bigg[\iint \Vert \delta_\alpha \delta_{h}f\Vert _{\dot H^{\frac{3}{4}}}^2
\kappa^2\left(\frac{1}{|h|}\right)\frac{\dh}{|h|^{2+\frac{19}{10}}}\frac{\dalpha}{|\alpha|^{2+\frac{11}{10}}}\bigg]\bigg[
\iint \Vert \delta_\alpha \delta_{h}f\Vert _{\dot H^{\frac{5}{4}}}^2
\frac{\dh}{|h|^{2+\frac{11}{10}}}\frac{\dalpha}{|\alpha|^{2+\frac{9}{10}}}\bigg].
\end{align*}
The estimate \e{tech:5bis} applied with $(a,b,c,\gamma)=(3/4,19/20,11/20,1)$ implies that
$$
\iint \Vert \delta_\alpha \delta_{h}f\Vert _{\dot H^{\frac{3}{4}}}^2
\kappa^2\left(\frac{1}{|h|}\right)\frac{\dh}{|h|^{2+\frac{19}{10}}}\frac{\dalpha}{|\alpha|^{2+\frac{11}{10}}}
\les \blA \D^{\frac{9}{4},\phi}f\brA_{L^2}.
$$
On the other hand, by applying twice the estimate~\e{tech:1}, we get
\begin{align*}
\iint \Vert \delta_\alpha \delta_{h}f\Vert _{\dot H^{\frac{5}{4}}}^2\frac{\dh}{|h|^{2+\frac{11}{10}}}\frac{\dalpha}{|\alpha|^{2+\frac{9}{10}}}
\les \lA f\rA_{\dot{H}^{\frac{9}{4}}}^2.
\end{align*}
This completes the proof of the claim~\e{Z13}, which in turn completes the proof of Proposition~\ref{Z18}.
\end{proof}

\subsection{End of the proof of Theorem \ref{mainthm0}}
We are now in position to complete the proof of Theorem~\ref{mainthm0}. 
Set
$$
A=\blA \D^{2,\phi}f\brA_{L^2}^2\quad , \quad B=\blA \D^{\frac{5}{2},\phi}f\brA_{L^2}^2.
$$
Write
\begin{align*}
&\int_{\xR^2}\Lr(f)f \D^{4,\phi^2} f\dx\\
&\qquad\qquad=\int_{\xR^2}\Big( \Lr(f)\big( \D^{\tdm,\phi}f\big) \D^{\frac52,\phi} f +\big[\D^{\tdm,\phi}, \Lr(f)\big]f  \D^{\frac52,\phi} f\Big)\dx\\
&\qquad\qquad\geq  \big\langle P(f)(\D^{\frac{3}{2},\phi}f),\D^{\frac{5}{2},\phi}f\big\rangle\\
&\qquad\qquad\quad-\left(\blA\left[\mathcal{R}, V(f)\right ] \D^{\frac{5}{2},\phi}f\brA_{L^2}+\lA H\rA_{L^2}\right) B^{\frac{1}{2}},
\end{align*}
where, by Lemma \ref{lem1}, 
\begin{align*}
	\big\langle P(f)(\D^{\frac{3}{2},\phi}f),\D^{\frac{5}{2},\phi}f\big\rangle
	&\geq \frac{B}{\langle \lA f\rA_{\dot W^{1,\infty}}\rangle^3}
	-C\lA f\rA_{\dot H^{\frac{9}{4}}}\blA \D^{\frac{19}{8},\phi}f\brA_{L^2}^2\\
	&\geq \frac{B}{\langle \lA f\rA_{\dot W^{1,\infty}}\rangle^3} 	-C\lA f\rA_{\dot H^{\frac{9}{4}}} A^{\frac{1}{4}}B^{\frac{3}{4}},
\end{align*}
and, by Proposition \ref{X1},
 \begin{align*}
 \blA \left[\mathcal{R}, V(f)\right ] (\D^{\frac{5}{2},\phi}f)\brA_{L^2} 
 &\lesssim  \left(\lA f\rA_{\dot H^{\frac{9}{4}}}+
 \lA f\rA_{\dot H^{\frac{17}{8}}}^2\right)\blA \D^{\frac{9}{4},\phi}f\brA_{L^2}	\\&\lesssim \left(\lA f\rA_{\dot H^{\frac{9}{4}}}+
 \lA f\rA_{\dot H^{\frac{17}{8}}}^2\right) A^{\frac{1}{4}}B^{\frac{1}{4}},
 \end{align*}
and, by Proposition \ref{Z19} and \ref{Z18}
\begin{align*}
\lA H\rA&\leq \blA R(f,\D^{\frac{3}{2},\phi}f)\brA_{L^2}+\blA \big[ \D^{\frac{3}{2},\phi},\mathcal{L}(f)\big](f)\brA_{L^2}\\&\lesssim\lA f\rA_{\dot H^{\frac{9}{4}}}\left(\Vert \D^{\frac{9}{4},\phi}f\Vert _{L^2}+\lA f\rA_{\dot{H}^{\frac{17}{8}}}\blA \D^{\frac{17}{18},\phi}f\brA_{L^2}+
\lA f\rA_{\dot{H}^{\frac{25}{12}}}^2\blA \D^{\frac{25}{12},\phi}f\brA_{L^2}\right)
\\&\lesssim\lA f\rA_{\dot H^{\frac{9}{4}}}\left(A^{\frac{1}{4}}B^{\frac{1}{4}}+\lA f\rA_{\dot{H}^{\frac{17}{8}}} A^{\frac{3}{8}}B^{\frac{1}{8}}+
\lA f\rA_{\dot{H}^{\frac{25}{12}}}^2A^{\frac{5}{12}}B^{\frac{1}{12}}\right).	
\end{align*}
By  Lemma  \ref{L:3.5}, one has for $s\in (2,5/2)$,
$$
	\lA f\rA_{\dot H^s}\lesssim \left(\phi\left(\frac{B}{A}\right)\right)^{-1} A^{\frac52-s}  B^{s-2}.
$$
Thus, 
\begin{align*}
\lA f\rA_{\dot H^{\frac{9}{4}}} A^{\frac{1}{4}}B^{\frac{3}{4}}+	\left(\lA\left[\mathcal{R}, V(f)\right ] \D^{\frac{5}{2},\phi}f\rA_{L^2}+\lA H\rA_{L^2}\right)B^{\frac{1}{2}}\lesssim A^{\frac{1}{2}}(1+A) B\left[\phi\left(\frac{B}{A}\right)\right]^{-1}.
\end{align*}
This implies \eqref{Z5} and hence complete the proof.

\section{Well-posedness results}

In this section we complete the proofs of Theorem~\ref{main}, 
Theorem~\ref{main-2} and Theorem~\ref{theo:main3}. 
To do so, we proceed by 
following a classical strategy:
\begin{enumerate}
\item We define approximate 
systems for which the existence of smooth solutions 
is easily obtained. 
\item We then prove uniform estimates for the solutions of the approximate systems, on a uniform time interval. 
\item Finally, we verify that 
the sequence of approximate solutions 
converges to a solution of the Muskat equation 
and prove a 
uniqueness result. 
\end{enumerate}
As we will see, this section does not contain any new arguments. 
Indeed, the core of the whole argument is 
contained in the 
estimates already proven in the previous sections. 
We will also need additional estimates, but they will all be obtained by adapting 
arguments similar to ones in our previous papers~\cite{AN1,AN2,AN3} for the 2D Muskat equation. 
For the sake of readability, we will state the corresponding estimates for the 3D problem and 
recall the principles of their proofs.

\subsection{Functional spaces}
We begin by recalling 
the definition of the 
spaces in which we shall work to study the Cauchy problem 
(see \S\ref{S:2.3}).

\begin{definition}\label{defi:D}
Consider 
a function $\phi\colon [0,\infty)\to [1,\infty)$, of the 
form
\begin{equation}\label{n10-ter}
\phi(\lambda)=4\pi\int_0^\infty \frac{1-\cos(r)}{r^{3/2}}\kappa\left(\frac{\lam}{r}\right)\frac{\dr}{r},
\end{equation}
where $\kappa\colon[0,\infty) \to [1,\infty)$ is an admissible weight (see Definition~\ref{defi:admissible}). 
\end{definition}

\subsection{Approximate Cauchy problems}
 To prove existence, we introduce the following 
 Cauchy problem depending on the parameter $\eps\in (0,1]$: 
\be\label{n2}
\left\{
\begin{aligned}
&\partial_tf-|\log(\varepsilon)|^{-1}\Delta f
= 	N_\varepsilon(f),\\
&f\arrowvert_{t=0}=f_0\star \chi_\eps,
\end{aligned}
\right.
\ee
where
\begin{align*}
	N_\varepsilon(f)= \frac{1}{2\pi}\int_{\mathbb{R}^2}
	\frac{\alpha\cdot\nabla_x\Delta_\alpha f(x)}{\langle \Delta_\alpha f(x)\rangle^{3}}
	\left(1-\chi\left(\frac{|\alpha|}{\varepsilon}\right)\right)\frac{\dalpha}{|\alpha|^2}
\end{align*}
and  $\chi_\eps(x)=\eps^{-1}\chi(x/\eps)$ where 
$\chi$ is a smooth bump function satisfying $0\le \chi\le 1$ and
$$
\chi(y)=\chi(-y),\quad 
\chi(y)=1 \quad\text{for}\quad |y|\le \frac14, \quad \chi(y)=0 \quad\text{for }\la y\ra\ge 2,\quad 
\int_{\xR^2}\chi \dy=1.
$$

\begin{lemma}\label{P:initiale}
For any $\eps$ in $(0,1]$ and any initial data $f_0$ 
in $H^{2}(\xR^2)$, there exists a unique global in time solution $f_\eps$ satisfying
$$
f_\eps\in C^1([0,+\infty);H^\infty(\xR^2)).
$$
\end{lemma}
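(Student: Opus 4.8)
The plan is to regard \eqref{n2} as a semilinear heat equation with the \emph{fixed} positive viscosity $\nu_\eps\defn|\log\eps|^{-1}$ and a tame nonlinearity $N_\eps$, and then to run the standard parabolic machinery: contraction mapping for local existence, parabolic smoothing for $H^\infty$ regularity, and a crude energy estimate (using the full strength of $\nu_\eps\Delta$) for globalization. First observe that the mollified datum lies in $H^\infty(\xR^2)$: since $\widehat{\chi_\eps}$ is rapidly decreasing (as $\chi_\eps$ belongs to the Schwartz class) and $\widehat{f_0\star\chi_\eps}=\hat f_0\,\widehat{\chi_\eps}$, one has $f_0\star\chi_\eps\in H^\infty(\xR^2)$ as soon as $f_0\in H^2(\xR^2)$ (indeed, $f_0\in L^2$ already suffices).

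The first substantive point is that $N_\eps$ is tame: for every integer $s\ge 2$ the map $N_\eps\colon H^s(\xR^2)\to H^s(\xR^2)$ is well defined, sends bounded sets to bounded sets, and is Lipschitz on bounded sets, with all constants depending on $\eps$. The mechanism is that the truncation $1-\chi(|\alpha|/\eps)$ removes the singular region $\{|\alpha|\le\eps/4\}$, so that after the integration by parts in $\alpha$ already used in the proofs of Lemma~\ref{L:2.2} and Proposition~\ref{P:3.1} — which here produces, in addition, only harmless terms where the $\alpha$-derivative falls on the cutoff, supported in the bounded annulus $\{\eps/4\le|\alpha|\le 2\eps\}$ — and after the symmetrisation $\alpha\mapsto-\alpha$ of Section~\ref{S:3} to recover decay as $|\alpha|\to\infty$, $N_\eps(f)$ becomes an absolutely convergent integral $\int_{\xR^2}\mathcal M^\eps_\alpha(x)\,d\alpha$ whose integrand is a product of $\delta_\alpha f$, $\delta_{\pm\alpha}\nabla_x f$ and smooth bounded functions of $\Delta_{\pm\alpha}f$ and $\ca\cdot\nabla_x f$ (recall $F(\tau)=\langle\tau\rangle^{-3}$ satisfies $0<F\le1$ and $|F'|+|F''|\lesssim1$). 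The $H^s$-norm of such an expression is controlled, by Moser-type product and composition estimates together with the fact that $N_\eps$ differentiates $f$ at most once (the only derivative inside is the $\nabla_x$ in $\alpha\cdot\nabla_x\Delta_\alpha f$), by $C_\eps(1+\|f\|_{H^s})^{N}\|f\|_{H^{s+1}}$ for some $N$; applying the same scheme to $N_\eps(f)-N_\eps(g)$ gives the local Lipschitz bound. Here the Lemma~\ref{L:technical}-type $\alpha$-integrals are all finite because of the cutoff, with $\eps$-dependent constants.

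With $\nu_\eps>0$ fixed, $e^{t\nu_\eps\Delta}$ is an analytic contraction semigroup on every $H^s(\xR^2)$, with the smoothing estimate $\|e^{t\nu_\eps\Delta}u\|_{H^{s+\sigma}}\lesssim_{\nu_\eps,\sigma}t^{-\sigma/2}\|u\|_{H^s}$ for $\sigma\ge0$. A Picard iteration on the Duhamel formula $f(t)=e^{t\nu_\eps\Delta}(f_0\star\chi_\eps)+\int_0^t e^{(t-\tau)\nu_\eps\Delta}N_\eps(f(\tau))\,d\tau$ in $C([0,T];H^2(\xR^2))$, using the local Lipschitz estimate above, produces a unique maximal solution $f_\eps\in C([0,T_\eps^*);H^2(\xR^2))$ (uniqueness following from Gr\"onwall applied to the difference of two solutions). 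The smoothing estimate then bootstraps this to $f_\eps\in C((0,T_\eps^*);H^\infty)$, and since the datum already belongs to $H^\infty$ in fact $f_\eps\in C([0,T_\eps^*);H^\infty)$; the equation then gives $\partial_t f_\eps\in C([0,T_\eps^*);H^\infty)$, i.e. $f_\eps\in C^1([0,T_\eps^*);H^\infty(\xR^2))$.

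It remains to show $T_\eps^*=+\infty$; this is the step I expect to be the main point, but it is soft here because the viscosity dominates the (once-differentiated, cut-off) nonlinearity. By the blow-up alternative it suffices to propagate a bound on $\|f_\eps(t)\|_{H^2}$ over every bounded time interval (the higher $H^s$-norms then follow by the same computation). For each integer $s\ge0$, an energy estimate gives
$$
\tfrac12\tfrac{d}{dt}\|f_\eps\|_{H^s}^2 = -\nu_\eps\|\nabla f_\eps\|_{H^s}^2 + \langle N_\eps(f_\eps),f_\eps\rangle_{H^s}\le -\nu_\eps\|\nabla f_\eps\|_{H^s}^2 + C_\eps(1+\|f_\eps\|_{H^s})^{N}\|f_\eps\|_{H^s}\|f_\eps\|_{H^{s+1}},
$$
where the bound on $\langle N_\eps(f_\eps),f_\eps\rangle_{H^s}$ comes from pairing $H^{s-1}$ against $H^{s+1}$ and the tame estimate above. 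Absorbing the factor $\|f_\eps\|_{H^{s+1}}$ into the dissipation by Young's inequality yields $\tfrac{d}{dt}\|f_\eps\|_{H^s}^2\le C'_\eps(1+\|f_\eps\|_{H^s})^{N'}\|f_\eps\|_{H^s}^2$. Starting at $s=0$, where this reads $\tfrac{d}{dt}\|f_\eps\|_{L^2}^2\le C_\eps\|f_\eps\|_{L^2}^2$ and hence $\|f_\eps(t)\|_{L^2}\le e^{C_\eps t/2}\|f_0\star\chi_\eps\|_{L^2}$, and inducting on $s$ (at level $s$ all lower-order norms are already known to be finite on $[0,T]$, so Gr\"onwall closes the estimate), one obtains finite bounds on every $\|f_\eps(t)\|_{H^s}$ on bounded intervals. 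Hence no norm blows up in finite time, $T_\eps^*=+\infty$, and $f_\eps\in C^1([0,+\infty);H^\infty(\xR^2))$. Note that the constants $C_\eps$ degenerate as $\eps\to0$; this is expected, the purpose of the delicate analysis of Sections~\ref{S:2}--\ref{S:4} being precisely to obtain the $\eps$-independent estimates.
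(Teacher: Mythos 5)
Your local-existence scaffolding is fine and matches the standard route: the mollified datum lies in $H^\infty$, the cutoff makes the $\alpha$-integrals absolutely convergent (after the integration by parts from Lemma~\ref{L:2.2} and Section~\ref{S:3}), the constant viscosity $\nu_\eps=|\log\eps|^{-1}$ gives a genuine analytic semigroup, and Picard plus parabolic smoothing produce a unique maximal $C^1([0,T^*_\eps);H^\infty)$ solution. The paper itself only cites \cite{AN2}, so this is a reasonable reconstruction.

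The gap is in the globalization, precisely where you flag it as ``soft''. Your $H^s$ energy inequality, after absorbing $\|f\|_{H^{s+1}}$ into the dissipation, has the form $\tfrac{d}{dt}\|f\|_{H^s}^2\le C_\eps(1+\|f\|_{H^s})^{N'}\|f\|_{H^s}^2$ with $N'\ge 1$; an ODE of that shape blows up in finite time, so by itself it does not give global existence. You resolve this by asserting that the polynomial coefficient actually depends only on the \emph{lower-order} norm $\|f\|_{H^{s-1}}$, so that an induction starting from the linear $L^2$ bound closes step by step. But that is not what the Moser/tame estimates actually produce here: $N_\eps(f)$ is quasilinear (it involves $\nabla f$ inside a nonlinear coefficient), and the natural tame estimate for $\|N_\eps(f)\|_{H^s}$ has a coefficient of the form $(1+\|\nabla f\|_{L^\infty})^N$, not $(1+\|f\|_{H^{s-1}})^N$. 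In dimension two $W^{1,\infty}$ is \emph{not} controlled by $H^{s-1}$ for the low levels $s=1,2,3$ where the induction needs to get started, so the step ``at level $s$ all lower-order norms are already known, hence Gr\"onwall closes'' does not follow. A correct closure has to first obtain a uniform-in-time (over bounded intervals) control of $\|f\|_{L^\infty}$ and $\|\nabla f\|_{L^\infty}$ — exactly what Lemma~\ref{L:2.1} and its sources provide, via a maximum-principle-type argument exploiting the specific structure of the kernel, together with the time-integrated dissipation bounds $\int_0^T\|f\|_{H^{s+1}}^2\,dt<\infty$ to control the right-hand sides appearing there. Once a $W^{1,\infty}$ bound is in hand, the $H^s$ Gr\"onwall closes with a coefficient $C_\eps(\|f\|_{W^{1,\infty}})$ and a simple integer-$s$ bootstrap does work. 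In short: you need the $L^\infty$/Lipschitz control as a separate input, not as a consequence of the $L^2\to H^1\to H^2\to\cdots$ ladder.
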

\begin{proof}
See Section~\S$2.9$ in \cite{AN2}.
\end{proof}

\begin{lemma}\label{L:2.1}
For any $\beta_0\in (0,1/2)$, there exists 
$c_0>0$ such that, for any $\eps\in (0,1]$ 
and any 
solution $f\in C^1([0,T];H^{\infty}(\xR^2))$ of the approximate Muskat equation \eqref{n2},
\be\label{a1-AN4}
\fract \lA f(t)\rA_{L^{\infty}}\le c_0 \varepsilon^{\beta_0}\lA f(t)\rA_{\dot C^{1+\beta_0}},
\ee
\be\label{a2-AN4}
\fract \lA \nabla f(t)\rA_{L^{\infty}}\le c_0\lA f(t)\rA_{\dot H^{\frac52}}^2
+c_0\varepsilon^{\beta_0}\lA f(t)\rA_{\dot C^{2+\beta_0}}.
\ee
\end{lemma}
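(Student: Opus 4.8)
The plan is to prove both inequalities by a maximum‑principle argument, using that the regularising Laplacian in~\eqref{n2} has a favourable sign at an extremum.

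\emph{Reduction to pointwise bounds.} For a solution $f\in C^1([0,T];H^\infty(\xR^2))$ the maps $t\mapsto\lA f(t)\rA_{L^\infty}$ and $t\mapsto\lA\nabla f(t)\rA_{L^\infty}$ are Lipschitz, and almost everywhere their derivatives are computed by evaluating $\partial_tf$, resp.\ $\partial_t(e_t\cdot\nabla f)$ where $e_t$ is a unit vector realising $e_t\cdot\nabla f(t,x_t)=\lA\nabla f(t)\rA_{L^\infty}$, at a point $x_t$ attaining the supremum. I treat a positive maximum (the negative minimum being symmetric): then $\nabla f(t,x_t)=0$ and $\Delta f(t,x_t)\le0$ in the first case, and $\nabla^2f(t,x_t)e_t=0$ and $\Delta(e_t\cdot\nabla f)(t,x_t)\le0$ in the second. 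Since $|\log\varepsilon|^{-1}>0$, the Laplacian term contributes a non‑positive quantity and can be discarded, so it remains to prove $N_\varepsilon(f)(x_t)\le c_0\,\varepsilon^{\beta_0}\lA f\rA_{\dot C^{1+\beta_0}}$ and $e_t\cdot\nabla_xN_\varepsilon(f)(x_t)\le c_0\lA f\rA_{\dot H^{5/2}}^2+c_0\,\varepsilon^{\beta_0}\lA f\rA_{\dot C^{2+\beta_0}}$.

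\emph{The bound on $N_\varepsilon(f)(x_t)$.} Write $N_\varepsilon(f)=-\Lr(f)f-\widetilde N_\varepsilon(f)$, where $\widetilde N_\varepsilon(f)(x)=\tfrac1{2\pi}\int\frac{\alpha\cdot\nabla_x\Delta_\alpha f(x)}{\langle\Delta_\alpha f(x)\rangle^{3}}\chi(|\alpha|/\varepsilon)\frac{\dalpha}{|\alpha|^2}$ collects the scales $|\alpha|\le2\varepsilon$, and $\Lr(f)f$ is as in~\eqref{defi:L(f)}. By the C\'ordoba--Gancedo pointwise maximum principle~\cite{CG-CMP}, $\Lr(f)f(x_t)\ge0$ at a positive maximum, so this term is harmless. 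For $\widetilde N_\varepsilon(f)(x_t)$, use $\nabla f(x_t)=0$ to write $\alpha\cdot\nabla_x\Delta_\alpha f(x_t)=\tfrac{\alpha}{|\alpha|}\cdot(\nabla f(x_t)-\nabla f(x_t-\alpha))$, hence $|\alpha\cdot\nabla_x\Delta_\alpha f(x_t)|\le\lA f\rA_{\dot C^{1+\beta_0}}|\alpha|^{\beta_0}$; together with $\langle\,\cdot\,\rangle^{-3}\le1$ and $0\le\chi\le1$ this gives $|\widetilde N_\varepsilon(f)(x_t)|\les\lA f\rA_{\dot C^{1+\beta_0}}\int_{|\alpha|\le2\varepsilon}|\alpha|^{\beta_0-2}\dalpha\les\varepsilon^{\beta_0}\lA f\rA_{\dot C^{1+\beta_0}}$, which is~\eqref{a1-AN4}.

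\emph{The bound on $e_t\cdot\nabla_xN_\varepsilon(f)(x_t)$.} Differentiating under the integral, $e_t\cdot\nabla_xN_\varepsilon(f)=(\mathrm I)+(\mathrm{II})$, where $(\mathrm I)$ is $N_\varepsilon$ with the numerator $\alpha\cdot\nabla_x\Delta_\alpha f$ replaced by $\alpha\cdot\nabla_x\Delta_\alpha g$, $g\defn e_t\cdot\nabla f$, and $(\mathrm{II})$ is the term where $e_t\cdot\nabla_x$ hits the coefficient $\langle\Delta_\alpha f\rangle^{-3}$, i.e.\ (up to scales $|\alpha|\le2\varepsilon$) the commutator $[\,e_t\cdot\nabla_x,\Lr(f)\,]f$, of the same nature controlled in Proposition~\ref{Z18}. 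I split off the scales $|\alpha|\le2\varepsilon$ from both: since $\nabla g(x_t)=\nabla^2f(x_t)e_t=0$ and $\nabla g$ is a first derivative of $\nabla f$, the computation of the previous paragraph, now with $\lA f\rA_{\dot C^{2+\beta_0}}$ in place of $\lA f\rA_{\dot C^{1+\beta_0}}$, bounds those pieces by $\varepsilon^{\beta_0}\lA f\rA_{\dot C^{2+\beta_0}}$. The full‑scale part of $(\mathrm I)$ equals $-\Lr(f)g(x_t)$; applying the quasilinearisation~\eqref{m5}, $\Lr(f)g=P(f)g+V(f)\cdot\nabla_xg+R(f,g)$, the transport term vanishes at $x_t$ because $\nabla g(x_t)=0$, while $P(f)g(x_t)=\tfrac1{2\pi}\int\frac{\delta_\alpha g(x_t)}{\langle\check\alpha\cdot\nabla f(x_t)\rangle^{3}}\frac{\dalpha}{|\alpha|^3}\ge0$ since $\delta_\alpha g(x_t)=g(x_t)-g(x_t-\alpha)\ge0$ at a maximum; hence $-\Lr(f)g(x_t)\le|R(f,g)(x_t)|$. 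It remains to bound $|R(f,g)(x_t)|$ and $|(\mathrm{II})(x_t)|$ by $\lA f\rA_{\dot H^{5/2}}^2$ (up to terms already of the admissible size $\varepsilon^{\beta_0}\lA f\rA_{\dot C^{2+\beta_0}}$): using the pointwise symbol bound~\eqref{m7est}, the inequality $|\delta_\alpha g(x_t)|\le|\nabla_x\delta_\alpha f(x_t)|$, and Cauchy--Schwarz in $\alpha$ with the weight $|\alpha|^{-3}$ distributed so that one factor becomes a finite‑difference square function controlled by $\lA f\rA_{\dot H^{5/2}}$ through the Gagliardo identity~\eqref{Gagliardo} and Lemma~\ref{L:technical} (the region $|\alpha|\gtrsim1$ being absorbed by the decay of the kernels), one obtains the claimed bound, exactly as in the analogous estimates of~\cite{AN1,AN2}; this yields~\eqref{a2-AN4}.

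\emph{Main difficulty.} All of the above is routine except this last step: fitting the remainder $R(f,g)$ and the commutator $(\mathrm{II})$ into $\lA f\rA_{\dot H^{5/2}}^2$ when they are evaluated at the extremal point $x_t$. This is where the precise structure given by Proposition~\ref{P:3.1} and the finite‑difference bookkeeping of Lemma~\ref{L:technical} are needed (following~\cite{AN1}); the favourable sign of $P(f)g(x_t)$ is what makes the borderline contributions absorbable rather than merely bounded.
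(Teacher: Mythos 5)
The paper's proof of this lemma is essentially a citation: for \eqref{a1-AN4} it invokes the maximum principle of C\'ordoba--Gancedo (the proof of Theorem~4.1 in \cite{CG-CMP2}), and for \eqref{a2-AN4} it invokes Section~10 of Gancedo--Lazar \cite{Gancedo-Lazar-H2}, in both cases adding the small-scale cutoff contribution that you estimate the same way (H\"older continuity of $\nabla f$ on $|\alpha|\le 2\varepsilon$). Your treatment of the cutoff pieces and of \eqref{a1-AN4} is correct and essentially identical to what the cited references provide, so that part is fine.

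The problem is in your treatment of \eqref{a2-AN4}, and it is precisely the step you yourself flag as the ``main difficulty''. You reduce, via Proposition~\ref{P:3.1} and the sign of $P(f)g$ at $x_t$, to bounding $|R(f,g)(x_t)|$ and the commutator $|(\mathrm{II})(x_t)|$, and then claim that Cauchy--Schwarz in $\alpha$ together with Gagliardo and Lemma~\ref{L:technical} give a bound by $\lA f\rA_{\dot H^{5/2}}^2$. This does not follow. After Cauchy--Schwarz with weight $|\alpha|^{-3}$ you are left with \emph{pointwise} square functions of the type $\bigl(\int |\nabla_x\delta_\alpha f(x_t)|^2\,\dalpha/|\alpha|^{3}\bigr)^{1/2}$, evaluated at the single extremal point $x_t$. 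The Gagliardo identity~\eqref{Gagliardo} and Lemma~\ref{L:technical} control only the $L^2_x$-average of such square functions by a homogeneous Sobolev norm of $f$; they give no pointwise control, and in general the value of a square function at a point is not bounded by an $L^2$-based norm (a high-frequency, small-amplitude oscillation localized near $x_t$ makes the pointwise square function large while $\lA f\rA_{\dot H^{5/2}}$ stays bounded). The analogous estimates in \cite{AN1,AN2} that you invoke are precisely $L^2_x$-integrated estimates and therefore do not supply the missing pointwise bound. The true content of the Gancedo--Lazar result that the paper cites is exactly this non-obvious pointwise-to-global inequality, which uses a different (oscillatory-integral) representation of the nonlinearity and cancellations that your decomposition does not expose. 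As written, your proof of \eqref{a2-AN4} therefore has a genuine gap.
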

\begin{proof}1) It follows from the proof of Theorem $4.1$ in~\cite{CG-CMP2} that
\begin{align*}
\fract \lA f(t)\rA_{\dot L^{\infty}}\le \sup_{x\in \mathbb{R}^2}
\frac{1}{2\pi}\left|\int_{\mathbb{R}^2}\frac{\alpha\cdot\nabla_x\Delta_\alpha f(t,x)}{\langle \Delta_\alpha f(t,x)\rangle^{3}}
\chi\left(\frac{|\alpha|}{\varepsilon}\right)\frac{\dalpha}{|\alpha|^2}\right|
\lesssim \varepsilon^{\beta_0}\lA f(t)\rA_{\dot C^{1+\beta_0}},
\end{align*}
which gives \eqref{a1-AN4}.

2) On the other hand, it follows from \cite[ Section 10]{Gancedo-Lazar-H2} that 
\begin{align*}
\fract \lA \nabla f(t)\rA_{L^{\infty}}&\le \frac{1}{2\pi} \lA f(t)\rA_{\dot H^{\frac52}}^2
+
\sup_{x\in \mathbb{R}^2} \frac{1}{2\pi}
\left|\nabla_x\int_{\mathbb{R}^2}
\frac{\alpha\cdot\nabla_x\Delta_\alpha f(t,x)}{\langle \Delta_\alpha f(t,x)\rangle^{3}}
\chi\left(\frac{|\alpha|}{\varepsilon}\right)\frac{\dalpha}{|\alpha|^2}\right|\\
&\lesssim  \lA f(t)\rA_{\dot H^{\frac52}}^2+ \varepsilon^{\beta_0}\lA f(t)\rA_{\dot C^{2+\beta_0}},
\end{align*}
which is the wanted estimate~\eqref{a2-AN4}. 
\end{proof}
In view of \e{a2-AN4}, we also need 
to estimate the H\"older norm $\lA f(t)\rA_{\dot C^{2+\beta_0}}$. 
This is the purpose of the following result.
\begin{lemma} There exists a positive constant $K$, independent of $\eps$, such that, for any $0<\beta_1\leq 10^{-4}$,
	\begin{align}\label{Z1}
\lA N_\varepsilon(f)\rA_{\dot C^{\beta_1}}&\le K \lA f\rA_{ C^{1+2\beta_1}}\left(1+\lA f\rA_{ C^{1+2\beta_1}}\right),
	\end{align}
where $\lA f\rA_{ C^{1+2\beta_1}}= \lA f\rA_{L^\infty}+	\lA f\rA_{\dot  C^{1+2\beta_1}}$. 
In particular, 
\begin{align}\label{Z2}
	\int_{0}^{t}\lA f(\tau)\rA_{\dot C^{2+\frac{\beta_1}{2}}} \dtau&\lesssim 	|\log(\varepsilon)|^{2}
	t^{\frac{2-\beta_1}{4}}\lA \nabla f_0\rA_{L^\infty}\\&~~+ |\log(\varepsilon)|^{2}t^{\frac{\beta_1}{4}} \int_{0}^{t} \lA f(\tau)\rA_{ C^{1+2\beta_1}}\left(1+\lA f(\tau)\rA_{ C^{1+2\beta_1}}\right)\dtau.\nonumber
\end{align}
\end{lemma}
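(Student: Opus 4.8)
The plan is to prove the two estimates \eqref{Z1} and \eqref{Z2} in turn, the first being a Hölder-space estimate for the nonlinear term $N_\varepsilon(f)$ and the second a time-integrated estimate for $\lA f(\tau)\rA_{\dot C^{2+\beta_1/2}}$ obtained by combining \eqref{Z1} with parabolic smoothing coming from the regularization term $|\log(\varepsilon)|^{-1}\Delta$ in \eqref{n2}.

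\textbf{Step 1: the estimate \eqref{Z1}.} First I would write $N_\varepsilon(f)$ using the decomposition of the nonlinearity introduced in Proposition~\ref{P:3.1}, or more directly exploit the structure
$$
N_\varepsilon(f)(x)=\frac{1}{2\pi}\int_{\mathbb{R}^2}\frac{\alpha\cdot\nabla_x\Delta_\alpha f(x)}{\langle \Delta_\alpha f(x)\rangle^3}\Big(1-\chi\big(\tfrac{|\alpha|}{\varepsilon}\big)\Big)\frac{\dalpha}{|\alpha|^2}.
$$
Integrating by parts in $\alpha$ as in Lemma~\ref{L:2.2} (and as in the computation $\D u=\frac{1}{2\pi}\int \delta_\alpha u(x)\,|\alpha|^{-1}|\alpha|^{-2}\dalpha$ recalled in \S\ref{S:3}) turns the $\nabla_x\Delta_\alpha f$ factor into finite differences $\delta_\alpha f$ at the cost of a symbol behaving like $|\alpha|^{-3}$, with a cutoff that restricts $|\alpha|\gtrsim\varepsilon$. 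To bound $\lA N_\varepsilon(f)\rA_{\dot C^{\beta_1}}$ I would split the $\alpha$-integral at scale $|\alpha|\sim 1$: for $|\alpha|\le 1$ I would use the $\dot C^{1+2\beta_1}$ bound on $f$, writing $|\delta_\alpha f(x)-\delta_\alpha f(y)|\lesssim |x-y|^{2\beta_1}|\alpha|^{1-\beta_1}\lA f\rA_{\dot C^{1+2\beta_1}}$ together with the crude bound $|\delta_\alpha f|\lesssim|\alpha|\,\lA\nabla f\rA_{L^\infty}$, so the relevant $\alpha$-integral $\int_{|\alpha|\le1}|\alpha|^{1-\beta_1}|\alpha|^{-3}\dalpha$ converges (the cutoff is not even needed here); for $|\alpha|\ge 1$ I would use $|\delta_\alpha f|\le 2\lA f\rA_{L^\infty}$ and $|\delta_\alpha f(x)-\delta_\alpha f(y)|\lesssim|x-y|^{\beta_1}\lA f\rA_{L^\infty}^{1-\beta_1}\cdots$, making $\int_{|\alpha|\ge1}|\alpha|^{-3}\dalpha$ converge. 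The quadratic term $\lA f\rA_{C^{1+2\beta_1}}^2$ comes from differentiating the denominator $\langle\Delta_\alpha f\rangle^{-3}$, which produces an extra factor controlled by $\lA\nabla f\rA_{L^\infty}$; a Leibniz expansion as in the proof of Proposition~\ref{P:3.1} produces exactly a sum of terms linear and quadratic in the $C^{1+2\beta_1}$ norm. The constant $K$ is independent of $\varepsilon$ since all $\alpha$-integrals converge uniformly.

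\textbf{Step 2: the estimate \eqref{Z2}.} Here I would use the Duhamel formula for \eqref{n2}: writing $\nu=|\log(\varepsilon)|^{-1}$ and denoting by $e^{t\nu\Delta}$ the heat semigroup,
$$
f(t)=e^{t\nu\Delta}(f_0\star\chi_\varepsilon)+\int_0^t e^{(t-\tau)\nu\Delta}N_\varepsilon(f(\tau))\,\dtau.
$$
For the linear part I would use the parabolic smoothing estimate $\lA e^{t\nu\Delta}g\rA_{\dot C^{2+\beta_1/2}}\lesssim (\nu t)^{-(1+\beta_1/2)/\sigma}\lA g\rA_{\dot C^{\sigma}}$ applied with $g=f_0\star\chi_\varepsilon$ and an appropriate choice of $\sigma$ near $1$ (say $\sigma=1$, giving a $t$-power $-(1+\beta_1/2)$ that is not integrable, so one must instead gain from $\lA f_0\star\chi_\varepsilon\rA_{\dot C^\sigma}\lesssim \varepsilon^{1-\sigma}\lA\nabla f_0\rA_{L^\infty}$ and pick $\sigma$ so that after integrating in time over $[0,t]$ one recovers the factor $t^{(2-\beta_1)/4}\lA\nabla f_0\rA_{L^\infty}$, absorbing the negative powers of $\varepsilon$ into the $|\log\varepsilon|^2$ prefactor). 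For the Duhamel integral I would use $\lA e^{(t-\tau)\nu\Delta}N_\varepsilon(f(\tau))\rA_{\dot C^{2+\beta_1/2}}\lesssim (\nu(t-\tau))^{-(2+\beta_1/2-\beta_1)/2}\lA N_\varepsilon(f(\tau))\rA_{\dot C^{\beta_1}}$, i.e. a singularity $(t-\tau)^{-1-\beta_1/4}$ — which is \emph{not} integrable — so the correct choice is to smooth only by $(2-\beta_1)$ derivatives at a time, i.e. estimate $\lA e^{(t-\tau)\nu\Delta}g\rA_{\dot C^{2+\beta_1/2}}\lesssim(\nu(t-\tau))^{-\lambda}\lA g\rA_{\dot C^{\beta_1}}$ with $\lambda=(2+\beta_1/2-\beta_1)/2=1-\beta_1/4<1$, making $\int_0^t(t-\tau)^{-\lambda}\dtau=c\,t^{\beta_1/4}$ finite; then plug in \eqref{Z1}. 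Collecting the powers of $\nu=|\log\varepsilon|^{-1}$ gives the $|\log\varepsilon|^2$ factor (two applications of smoothing, or one application with a rough $\varepsilon$-dependent loss), and collecting the $t$-powers gives $t^{(2-\beta_1)/4}$ on the data term and $t^{\beta_1/4}$ on the nonlinear term, which is exactly \eqref{Z2}.

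\textbf{Main obstacle.} The delicate point is bookkeeping the powers of $t$ and of $|\log(\varepsilon)|$ so that the heat-kernel singularities $(t-\tau)^{-\lambda}$ stay integrable ($\lambda<1$, which forces smoothing by strictly fewer than two derivatives and is why $\beta_1\le 10^{-4}$ suffices) while still closing with the stated exponents; one must choose the intermediate Hölder exponent in the Duhamel estimate (here $\dot C^{\beta_1}$, matching the output of \eqref{Z1}) and track how the $\varepsilon$-mollification of $f_0$ converts lost derivatives into harmless powers of $\varepsilon$ dominated by $|\log\varepsilon|^2$. The Hölder smoothing bounds for $e^{t\nu\Delta}$ with the correct $\nu$-scaling, and the fact that $\dot C^{\beta_1}\to\dot C^{2+\beta_1/2}$ costs exactly $\lambda=1-\beta_1/4$ powers, are the quantitative heart of the argument; everything else is a routine splitting of singular $\alpha$-integrals as in Step~1.
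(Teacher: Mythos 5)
Your Step~1 has a genuine gap. After integrating by parts and putting absolute values inside, you claim the small-$\alpha$ piece is controlled by
$$
\int_{|\alpha|\le 1}|\alpha|^{1-\beta_1}\,\frac{\dalpha}{|\alpha|^3},
$$
and that this ``converges (the cutoff is not even needed here).'' It does not: in two dimensions this equals $2\pi\int_0^1 r^{-1-\beta_1}\dr=\infty$. Even the sharpest Hölder bound on a single difference, $|\delta_\alpha f(x)-\delta_\alpha f(y)|\lesssim |\alpha|\,|x-y|^{2\beta_1}\lA f\rA_{\dot C^{1+2\beta_1}}$, gives the log-divergent integral $\int_0^1 r^{-1}\dr$. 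The point you are losing is that the $\D$-type singular kernel $\alpha/|\alpha|^3$ is \emph{not} absolutely integrable against a single finite difference of a $C^{1+\sigma}$ function; it relies on the odd cancellation $\int_{|\alpha|\le 1}\alpha\,|\alpha|^{-3}\dalpha=0$, after which one gets a factor $|\alpha|^{1+\sigma}$ (from a Taylor remainder) rather than $|\alpha|$. A naive absolute-value split destroys this cancellation, and the constant $K$ you would obtain blows up like $\varepsilon^{-\beta_1}$ rather than being $\varepsilon$-independent. The paper avoids this by applying a Leibniz expansion to $\delta_{\alpha'}N_\varepsilon(f)$ \emph{without} first integrating by parts: the piece where $\delta_{\alpha'}$ falls on $\nabla_x\Delta_\alpha f$ and the coefficient $\langle\Delta_\alpha f\rangle^{-3}$ is treated as bounded is recognized (up to an $\varepsilon^{\beta_2}$ cutoff correction) as $-2\pi\,\D\,\delta_{\alpha'}f$, which is controlled using the boundedness of $\D$ on Hölder spaces (i.e., using the cancellation), while the remaining pieces carry \emph{two} finite-difference factors — one in $\alpha$, one in $\alpha'$ — so that the $|\alpha|$-singularity at the origin can be split between them and becomes absolutely convergent.

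Step~2 follows the same route as the paper (Duhamel plus heat smoothing), but contains a bookkeeping error in the parabolic smoothing exponent for the data term. You wrote the cost as $(\nu t)^{-(1+\beta_1/2)/\sigma}$ and concluded that at $\sigma=1$ the exponent $-(1+\beta_1/2)$ is not integrable, forcing a detour through the $\varepsilon$-mollification of $f_0$. The correct exponent is $-(2+\beta_1/2-\sigma)/2$, which at $\sigma=1$ gives $-\tfrac12-\tfrac{\beta_1}{4}>-1$: this \emph{is} integrable and yields $\int_0^t\tau^{-\frac{2+\beta_1}{4}}\dtau\sim t^{\frac{2-\beta_1}{4}}$, exactly as in \eqref{Z2}. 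The paper applies the heat kernel directly to $f_0$ using only $\nabla f_0\in L^\infty$; no mollification trick is needed, and the exponent bookkeeping is entirely straightforward once the $1/2$ scaling of $\Delta$ is used correctly. The Duhamel piece of your argument, with cost $(t-\tau)^{-1+\beta_1/4}$ applied to $\lA N_\varepsilon(f)(\tau)\rA_{\dot C^{\beta_1}}$, is the same as in the paper.
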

\begin{proof} One has for $\alpha'\in \mathbb{R}^2,$
	\begin{align*}
		|\delta_{\alpha'} N_\varepsilon (f)(x)|&\lesssim  \left|\int_{\mathbb{R}^2}
		\frac{\alpha\cdot\nabla_x\Delta_\alpha \delta_{\alpha'}f(x)}{\langle \Delta_\alpha f(x-\alpha')\rangle^{3}}
		\left(1-\chi\left(\frac{|\alpha|}{\varepsilon}\right)\right)\frac{\dalpha}{|\alpha|^2}\right|\\&\quad+\int_{\mathbb{R}^2} \la\alpha\cdot\nabla_x\Delta_\alpha f(x)\ra
	\la	\delta_{\alpha'}\left(\frac{1}{\langle \Delta_\alpha f(\cdot)\rangle^{3}}\right)(x)\ra\frac{\dalpha}{|\alpha|^2}.	\end{align*}
Since
\begin{align*}
\left|\frac{1}{\langle a\rangle^3}-1\right|\lesssim |a|, 	\quad
\left|\frac{1}{\langle a\rangle^3}-\frac{1}{\langle b\rangle^3}\right|\lesssim|a-b|,
\end{align*}
we obtain, for $0<\beta_2\leq \frac{\beta_1}{100}$ and for any  $\alpha'\in \mathbb{R}^2$, 
\begin{align*}
|\delta_{\alpha'} N_\varepsilon (f)(x)|
&\lesssim  \left|\int_{\mathbb{R}^2}
\alpha\cdot\nabla_x\Delta_\alpha \delta_{\alpha'}f(x)
\left(1-\chi\left(\frac{|\alpha|}{\varepsilon}\right)\right)\frac{\dalpha}{|\alpha|^2}\right|\\
&\quad+\int
|\nabla_x\delta_\alpha \delta_{\alpha'}f(x)||\delta_{\alpha} f(x)|\frac{\dalpha}{|\alpha|^3}+\int |\nabla_x\delta_\alpha f(x)||\delta_\alpha \delta_{\alpha'}f(x)|\frac{\dalpha}{|\alpha|^3}\\
&\lesssim 
\lA \delta_{\alpha'}\D f\rA_{L^\infty}+\varepsilon^{\beta_2}\sup_{\alpha}\frac{|\delta_{\alpha} (\nabla_x \delta_{\alpha'} f)(x)|}{|\alpha|^{\beta_2}}\\
&\quad+\sup_{\alpha}\frac{|\delta_{\alpha} (\nabla \delta_{\alpha'} f)(x)|}{|\alpha|^{\beta_2}} \int \frac{|\delta_{\alpha}f(x)|}{|\alpha|^{3-\beta_2}}\dalpha\\
&\quad+\sup_{\alpha}\frac{|\delta_\alpha \delta_{\alpha'}f(x)|}{|\alpha|^{1-\beta_2}}\int \frac{|\nabla_x\delta_\alpha f(x)|}{|\alpha|^{2+\beta_2}}\dalpha.
\end{align*}
This implies, 
\begin{align*}
\lA N_\varepsilon(f)\rA_{\dot C^{\beta_1}}
&\lesssim \lA f\rA_{\dot C^{1+\beta_1-\beta_2}}^{\frac{1}{2}}\lA f\rA_{\dot C^{1+\beta_1+\beta_2}}^{\frac{1}{2}}\\
&\quad+\varepsilon^{\beta_2}\sup_{\alpha,\alpha',x}\frac{|\delta_{\alpha} (\nabla \delta_{\alpha'} f)(x)|}{|\alpha|^{\beta_2}|\alpha'|^{\beta_1}}\\
&\quad+\sup_{\alpha,\alpha',x}\frac{|\delta_{\alpha} (\nabla \delta_{\alpha'} f)(x)|}{|\alpha|^{\beta_2}|\alpha'|^{\beta_1}}\lA f\rA_{L^\infty}^{\beta_2}\lA \nabla f\rA_{L^\infty}^{1-\beta_2}\\
&\quad+\sup_{\alpha,\alpha',x}\frac{|\delta_\alpha \delta_{\alpha'}f(x)|}{|\alpha|^{1-\beta_2}|\alpha'|^{\beta_1}}\lA \nabla f\rA_{L^\infty}^{\frac{1}{2}}\lA f\rA_{\dot C^{1+2\beta_2}}^{\frac{1}{2}}.
\end{align*}
It is easy to check that 
\begin{align*}
	|\delta_{\alpha} \delta_{\alpha'} g(x)|\lesssim |\alpha|^{a_1}|\alpha|^{a_2}\lA g\rA_{\dot C^{a_1+a_2-\eps_0}}^{\frac{1}{2}}\lA g\rA_{\dot C^{a_1+a_2+\eps_0}}^{\frac{1}{2}},
\end{align*}
for any $0<\eps_0<(a_1+a_2)/100$.
Therefore, 
\begin{align*}
\lA N_\varepsilon(f)\rA_{\dot C^{\beta_1}}&\lesssim \lA f\rA_{\dot C^{1+\beta_1-\beta_2}}^{\frac{1}{2}}\lA f\rA_{\dot C^{1+\beta_1+\beta_2}}^{\frac{1}{2}}+\varepsilon^{\beta_2}\lA f\rA_{\dot C^{1+\beta_1}}^{\frac{1}{2}}\lA f\rA_{\dot C^{1+\beta_1+2\beta_2}}^{\frac{1}{2}}\\&\quad+\lA f\rA_{\dot C^{1+\beta_1}}^{\frac{1}{2}}\lA f\rA_{\dot C^{1+\beta_1+2\beta_2}}^{\frac{1}{2}}\lA f\rA_{L^\infty}^{\beta_2}\lA \nabla f\rA_{L^\infty}^{1-\beta_2}\\
&\quad+\lA f\rA_{\dot C^{1+\beta_1-2\beta_2}}^{\frac{1}{2}}\lA f\rA_{\dot C^{1+\beta_1}}^{\frac{1}{2}}\lA \nabla f\rA_{L^\infty}^{\frac{1}{2}}\lA f\rA_{\dot C^{1+2\beta_2}}^{\frac{1}{2}}.
\end{align*}
By interpolation inequality, one gets \eqref{Z1}.
Now we observe that
\begin{align*}
f(t,x)&=c\int_{\mathbb{R}^2}\frac{1}{t}\exp\(-|\log \varepsilon|\frac{|x-y|^2}{4t}\) f_0(y)\dy\\&\quad\quad
+c\int_{0}^{t}\int_{\mathbb{R}^2}\frac{1}{t-\tau}\exp\(-|\log \varepsilon|\frac{|x-y|^2}{4(t-\tau)}\) N_\varepsilon(f)(\tau,y)\dy\dtau,
\end{align*}
so,
\begin{align*}
\lA f(t)\rA_{\dot C^{2+\frac{\beta_1}{2}}}
&\lesssim |\log(\varepsilon)|^{2}t^{-\frac{2+\beta_1}{4}}\lA \nabla f_0\rA_{L^\infty}\\
&\quad +|\log(\varepsilon)|^{2}
\int_{0}^{t}\frac{1}{(t-\tau)^{1-\beta_1/4}} \lA 
N_\varepsilon(f)(\tau)\rA_{\dot C^{\beta_1}}\dtau.
\end{align*}
This implies \eqref{Z2}. The proof is complete. 
\end{proof}
\begin{corollary}\label{Coro1} For $0<\beta_0<10^{-3}$ and 
$t\leq |\log(\varepsilon)|$, there holds
\begin{multline}\label{Lipes}
	\sup_{\tau\in [0,t]}\lA \nabla f(\tau)\rA_{L^{\infty}}-\lA \nabla f_0\rA_{L^{\infty}}\\
	\lesssim 
	\int_{0}^{t}\lA f(t)\rA_{\dot H^{\frac52}}^2
	+ \varepsilon^{\beta_0}\int_{0}^{t} \lA f(\tau)\rA_{\dot H^{2}}\dtau+\varepsilon^{\beta_0/4}(1+\lA f_0\rA_{W^{1,\infty}}).
\end{multline} 
\end{corollary}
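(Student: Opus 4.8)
The plan is to combine the two pointwise–in–time bounds of Lemma~\ref{L:2.1} with the parabolic smoothing estimate~\eqref{Z2}, exploiting throughout the hypothesis $t\le|\log(\varepsilon)|$ to trade powers of $|\log(\varepsilon)|$ for small powers of $\varepsilon$. First I would integrate~\eqref{a2-AN4} in time. Since $f$ solves~\eqref{n2} with initial datum $f_0\star\chi_\varepsilon$ and $\chi_\varepsilon$ has integral one, $\lA\nabla f(0)\rA_{L^\infty}=\lA(\nabla f_0)\star\chi_\varepsilon\rA_{L^\infty}\le\lA\nabla f_0\rA_{L^\infty}$, and $s\mapsto\lA\nabla f(s)\rA_{L^\infty}$ is locally Lipschitz because $f\in C^1([0,\infty);H^\infty(\xR^2))$; hence for every $\tau\in[0,t]$
\begin{equation*}
\lA\nabla f(\tau)\rA_{L^\infty}-\lA\nabla f_0\rA_{L^\infty}\le c_0\int_0^t\lA f(s)\rA_{\dot H^{\frac52}}^2\,\ds+c_0\,\varepsilon^{\beta_0}\int_0^t\lA f(s)\rA_{\dot C^{2+\beta_0}}\,\ds .
\end{equation*}
The right–hand side does not depend on $\tau$ and is nonnegative, so taking $\sup_{\tau\in[0,t]}$ costs nothing, and since the first term already appears in~\eqref{Lipes} the whole matter reduces to estimating $\varepsilon^{\beta_0}\int_0^t\lA f(s)\rA_{\dot C^{2+\beta_0}}\,\ds$.

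For this I would invoke~\eqref{Z2} with $\beta_1=2\beta_0$ (admissible for $\beta_0$ small enough), so that $2+\beta_1/2=2+\beta_0$. Multiplying the resulting inequality by $\varepsilon^{\beta_0}$ and using $t\le|\log(\varepsilon)|$, both prefactors $|\log(\varepsilon)|^2 t^{(2-\beta_1)/4}$ and $|\log(\varepsilon)|^2 t^{\beta_1/4}$ are $\le|\log(\varepsilon)|^3$, while $\varepsilon^{\beta_0}|\log(\varepsilon)|^3\le C_{\beta_0}\,\varepsilon^{\beta_0/4}$ on $(0,1]$ (because $\varepsilon^{3\beta_0/4}|\log(\varepsilon)|^3$ is bounded there). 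Thus the first term of~\eqref{Z2} contributes at most $C\,\varepsilon^{\beta_0/4}\lA\nabla f_0\rA_{L^\infty}\le C\,\varepsilon^{\beta_0/4}(1+\lA f_0\rA_{W^{1,\infty}})$, and the second contributes at most
\begin{equation*}
C\,\varepsilon^{\beta_0/4}\int_0^t\lA f(s)\rA_{C^{1+2\beta_1}}\bigl(1+\lA f(s)\rA_{C^{1+2\beta_1}}\bigr)\,\ds
\end{equation*}
(in fact one may keep a power of $\varepsilon$ larger than $\varepsilon^{\beta_0/4}$ here, which is what makes the last step close).

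It then remains to control $\lA f(s)\rA_{C^{1+2\beta_1}}=\lA f(s)\rA_{L^\infty}+\lA f(s)\rA_{\dot C^{1+2\beta_1}}$ by the norms occurring in~\eqref{Lipes}. For the homogeneous seminorm I would use the two–dimensional embedding $\dot H^{2+2\beta_1}(\xR^2)\hookrightarrow\dot C^{1+2\beta_1}(\xR^2)$ and interpolation, $\lA f\rA_{\dot H^{2+2\beta_1}}\le\lA f\rA_{\dot H^2}^{1-4\beta_1}\lA f\rA_{\dot H^{\frac52}}^{4\beta_1}\lesssim\lA f\rA_{\dot H^2}+\lA f\rA_{\dot H^{\frac52}}$. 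For the $L^\infty$ part I would integrate~\eqref{a1-AN4}, using $\lA f_0\star\chi_\varepsilon\rA_{L^\infty}\le\lA f_0\rA_{L^\infty}$, the analogous bound $\lA f\rA_{\dot C^{1+\beta_0}}\lesssim\lA f\rA_{\dot H^2}+\lA f\rA_{\dot H^{\frac52}}$, and $\int_0^t\lA f\rA_{\dot H^{\frac52}}\,\ds\le|\log(\varepsilon)|^{1/2}\bigl(\int_0^t\lA f\rA_{\dot H^{\frac52}}^2\,\ds\bigr)^{1/2}$, which gives
\begin{equation*}
\sup_{s\in[0,t]}\lA f(s)\rA_{L^\infty}\lesssim\lA f_0\rA_{L^\infty}+\varepsilon^{\beta_0}\int_0^t\lA f(s)\rA_{\dot H^2}\,\ds+\varepsilon^{\beta_0/2}\Bigl(\int_0^t\lA f(s)\rA_{\dot H^{\frac52}}^2\,\ds\Bigr)^{1/2}.
\end{equation*}
Inserting these estimates into the integral above and expanding, every term is at most quadratic in $\lA f\rA_{\dot H^2}$ and $\lA f\rA_{\dot H^{\frac52}}$ (times data factors), and the residual positive power of $\varepsilon$ allows one, by Hölder's inequality in $s$ and Young's inequality, to absorb each of them into the three terms on the right of~\eqref{Lipes}.

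The main obstacle is precisely this last piece of bookkeeping: one must track, at each step, which residual power of $\varepsilon$ (equivalently, via $t\le|\log(\varepsilon)|$, which power of $|\log(\varepsilon)|$) survives, and check that it suffices to dominate the quadratic $\dot H^2$–contributions coming from $\lA f\rA_{C^{1+2\beta_1}}^2$ by $\varepsilon^{\beta_0}\int_0^t\lA f\rA_{\dot H^2}\,\ds$ together with the other two terms of~\eqref{Lipes}. All the substantive input has already been established in~\eqref{a1-AN4},~\eqref{a2-AN4} and~\eqref{Z2}; as announced at the beginning of the section, this is the same computation carried out for the $2$D Muskat equation in~\cite{AN2,AN3}, and it introduces no new idea.
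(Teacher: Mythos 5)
Your first two steps match the paper exactly: integrate \eqref{a2-AN4} in time, substitute \eqref{Z2} with $\beta_1=2\beta_0$, and use $t\le|\log\varepsilon|$ to turn the $|\log\varepsilon|$--factors into small powers of $\varepsilon$, recovering the term $\varepsilon^{\beta_0/4}(1+\lA f_0\rA_{W^{1,\infty}})$. The gap is in how you then control $\lA f\rA_{C^{1+2\beta_1}}$. You propose the embedding $\dot H^{2+2\beta_1}(\xR^2)\hookrightarrow\dot C^{1+2\beta_1}(\xR^2)$ together with $\lA f\rA_{\dot H^{2+2\beta_1}}\lesssim\lA f\rA_{\dot H^2}+\lA f\rA_{\dot H^{5/2}}$. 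Plugged into the quadratic factor $\lA f\rA_{C^{1+2\beta_1}}(1+\lA f\rA_{C^{1+2\beta_1}})$ coming from \eqref{Z1}, this produces, up to an $\varepsilon^{\beta_0/2}$ prefactor, the term
\begin{equation*}
\varepsilon^{\beta_0/2}\int_0^t\lA f(\tau)\rA_{\dot H^2}^2\,\dtau,
\end{equation*}
and no amount of Hölder in $\tau$ or Young's inequality lets you dominate this by the three quantities $\int_0^t\lA f\rA_{\dot H^{5/2}}^2\,\dtau$, $\varepsilon^{\beta_0}\int_0^t\lA f\rA_{\dot H^2}\,\dtau$ and $\varepsilon^{\beta_0/4}(1+\lA f_0\rA_{W^{1,\infty}})$ appearing on the right of \eqref{Lipes}: a superlinear power of $\lA f\rA_{\dot H^2}$ cannot be absorbed into a linear one when $\lA f\rA_{\dot H^2}$ is allowed to be large, and no bootstrap bound on $\lA f\rA_{\dot H^2}$ is available at this stage. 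The final paragraph where you defer the "bookkeeping" to earlier works does not repair this; with the interpolation you chose, the bookkeeping genuinely does not close.

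What the paper's proof does instead is to \emph{not} descend to the $\dot H^2$--$\dot H^{5/2}$ scale at this point. It interpolates the Hölder seminorm $\lA f\rA_{\dot C^{1+4\beta_0}}$ between the two endpoints $L^\infty$ and $\dot C^{2+\beta_0}$, the second of which is precisely the quantity appearing on the left of \eqref{Z2}. After Young's inequality, the $\dot C^{2+\beta_0}$ contribution is re-absorbed into $\varepsilon^{\beta_0}\int_0^t\lA f\rA_{\dot C^{2+\beta_0}}\,\dtau$ on the left-hand side (this is the self-improvement hidden behind the phrase "by interpolation inequality"), leaving only $\varepsilon^{\beta_0/2}\int_0^t\lA f\rA_{L^\infty}\,\dtau+\varepsilon^{\beta_0/2}$. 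Only \emph{then} is \eqref{a1-AN4} invoked, to bound $\lA f(\tau)\rA_{L^\infty}$ by $\lA f_0\rA_{L^\infty}+\varepsilon^{\beta_0}\int_0^\tau\lA f\rA_{\dot C^{1+\beta_0}}\,\ds$, and $\lA f\rA_{\dot C^{1+\beta_0}}$ is estimated via $\dot H^{2+\beta_0}\hookrightarrow\dot C^{1+\beta_0}$ and the interpolation $\lA f\rA_{\dot H^{2+\beta_0}}\le\lA f\rA_{\dot H^2}^{1-2\beta_0}\lA f\rA_{\dot H^{5/2}}^{2\beta_0}$, which after Young's inequality and Cauchy--Schwarz in time contributes exactly $\varepsilon^{\beta_0}\int_0^t\lA f\rA_{\dot H^2}\,\dtau$ plus absorbable remainders. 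To fix your argument you would need to replace the embedding $\dot H^{2+2\beta_1}\hookrightarrow\dot C^{1+2\beta_1}$ by this $L^\infty$--$\dot C^{2+\beta_0}$ interpolation with the absorption step made explicit, and postpone the passage to $\dot H^2$ until after the $\lA f\rA_{L^\infty}$ bound, as the paper does.
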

\begin{proof} By \eqref{a2-AN4} and \eqref{Z2}, one has for any $t\leq |\log(\varepsilon)|$, 
	\begin{align*}
\sup_{\tau\in [0,t]}\lA \nabla f(\tau)\rA_{L^{\infty}}-\lA \nabla f_0\rA_{L^{\infty}}&\lesssim \int_{0}^{t}\lA f(t)\rA_{\dot H^{\frac52}}^2
+\varepsilon^{\beta_0}\int_{0}^{t}\lA f(t)\rA_{\dot C^{2,\beta_0}}\\&\lesssim \int_{0}^{t}\lA f(t)\rA_{\dot H^{\frac52}}^2
+\varepsilon^{\beta_0/2}\lA \nabla f_0\rA_{L^{\infty}}\\&\quad+ \varepsilon^{\beta_0/2}\int_{0}^{t} \lA f(\tau)\rA_{ C^{1+4\beta_0}}\left(1+\lA f(\tau)\rA_{ C^{1+4\beta_0}}\right)\dtau.
\end{align*}
By interpolation inequality,
\begin{align*}
\sup_{\tau\in [0,t]}\lA \nabla f(\tau)\rA_{L^{\infty}}-\lA \nabla f_0\rA_{L^{\infty}}&\lesssim  
\int_{0}^{t}\lA f(t)\rA_{\dot H^{\frac52}}^2
+\varepsilon^{\beta_0/2}\lA \nabla f_0\rA_{L^{\infty}}\\&\quad+ \varepsilon^{\beta_0/2}\int_{0}^{t} \lA f(\tau)\rA_{L^\infty}\dtau+\varepsilon^{\beta_0/2}.
\end{align*} 
Combining this with \eqref{a1-AN4}, one gets,
\begin{align*}
&\sup_{\tau\in [0,t]}\lA \nabla f(\tau)\rA_{L^{\infty}}-\lA \nabla f_0\rA_{L^{\infty}}\\
&\qquad\qquad\lesssim 
\int_{0}^{t}\lA f(t)\rA_{\dot H^{\frac52}}^2
+ \varepsilon^{\beta_0}\int_{0}^{t} \lA f(\tau)\rA_{\dot C^{1,\beta_0}}\dtau+\varepsilon^{\beta_0/4}(1+\lA f_0\rA_{W^{1,\infty}})\\
&\qquad\qquad\lesssim 
	\int_{0}^{t}\lA f(t)\rA_{\dot H^{\frac52}}^2
	+ \varepsilon^{\beta_0}\int_{0}^{t} \lA f(\tau)\rA_{\dot H^{2}}^{1-2\beta_0}\lA f(\tau)\rA_{\dot H^{\frac{5}{2}}}^{2\beta_0}\dtau
	+\varepsilon^{\beta_0/4}(1+\lA f_0\rA_{W^{1,\infty}}).
\end{align*} 
This gives \eqref{Lipes}. The proof is complete. 
\end{proof}
\begin{lemma} \label{Z6}For $\beta_0\in (0,10^{-10}]$
\begin{equation}\label{n68}
\lA \int_{\mathbb{R}^2}
	\frac{\alpha\cdot\nabla_x\Delta_\alpha f}{\langle \Delta_\alpha f\rangle^{3}}
	\chi\left(\frac{|\alpha|}{\varepsilon}\right)\frac{\dalpha}{|\alpha|^2}\rA_{\dot H^{\frac{9}{5}}}
	\lesssim \varepsilon^{2\beta_0} (1+\lA f\rA_{\dot H^2})^{3} (1+\lA f\rA_{\dot H^3}).
\end{equation}
\end{lemma}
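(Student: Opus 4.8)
The integral in~\e{n68} is a \emph{short-range} operator, since $\chi(|\alpha|/\varepsilon)$ is supported in $\{|\alpha|\le 2\varepsilon\}$, and the whole point is that the small factor $\varepsilon^{2\beta_0}$ will come from this restriction of the $\alpha$-integration to a ball of radius $2\varepsilon$. My plan is to combine Minkowski's inequality~\e{Min} in the $\alpha$ variable with the finite-difference characterizations of Sobolev norms of Lemma~\ref{L:technical}, exactly as in the remainder estimates of~\S\ref{S:4.3}--\S\ref{S:4.4}. Writing $\Phi(r)=\langle r\rangle^{-3}$, I would split
$$
\int_{\xR^2}\frac{\alpha\cdot\nabla_x\Delta_\alpha f}{\langle \Delta_\alpha f\rangle^{3}}\chi\Big(\frac{|\alpha|}{\varepsilon}\Big)\frac{\dalpha}{|\alpha|^2}=T_\varepsilon f+Q_\varepsilon f,
$$
where $T_\varepsilon f=\int_{\xR^2}(\alpha\cdot\nabla_x\Delta_\alpha f)\,\chi(|\alpha|/\varepsilon)\,|\alpha|^{-2}\dalpha$ is obtained by replacing $\Phi(\Delta_\alpha f)$ by $1$, and $Q_\varepsilon f=\int_{\xR^2}\big(\Phi(\Delta_\alpha f)-1\big)(\alpha\cdot\nabla_x\Delta_\alpha f)\,\chi(|\alpha|/\varepsilon)\,|\alpha|^{-2}\dalpha$ is the nonlinear remainder.

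The linear part $T_\varepsilon f$ is handled cleanly. From $\alpha\cdot\nabla_x\Delta_\alpha f=|\alpha|^{-1}\alpha\cdot\delta_\alpha\nabla_x f$ one gets $\lA \alpha\cdot\nabla_x\Delta_\alpha f\rA_{\dot H^{s}}\lesssim \lA \delta_\alpha\nabla_x f\rA_{\dot H^{s}}$ for every $s$, and by Plancherel together with $|1-e^{-i\alpha\cdot\xi}|\lesssim|\alpha\cdot\xi|^{2\beta_0}$ one has $\lA\delta_\alpha\nabla_x f\rA_{\dot H^{9/5}}\lesssim|\alpha|^{2\beta_0}\lA f\rA_{\dot H^{14/5+2\beta_0}}$. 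Hence, by Minkowski's inequality~\e{Min} and $\int_{|\alpha|\le 2\varepsilon}|\alpha|^{2\beta_0-2}\dalpha\lesssim\varepsilon^{2\beta_0}$,
$$
\lA T_\varepsilon f\rA_{\dot H^{9/5}}\le\int_{|\alpha|\le 2\varepsilon}\lA\delta_\alpha\nabla_x f\rA_{\dot H^{9/5}}\frac{\dalpha}{|\alpha|^2}\lesssim \varepsilon^{2\beta_0}\lA f\rA_{\dot H^{14/5+2\beta_0}},
$$
and since $2\beta_0\le 10^{-10}$ gives $2\le 14/5+2\beta_0<3$, interpolation between $\dot H^{2}$ and $\dot H^{3}$ bounds the right-hand side by $\varepsilon^{2\beta_0}(1+\lA f\rA_{\dot H^2})(1+\lA f\rA_{\dot H^3})$, which is even stronger than needed.

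For the nonlinear part $Q_\varepsilon f$ I would argue as for the remainder terms in~\S\ref{S:4.3}--\S\ref{S:4.4} and in~\cite{AN1,AN2}. Using $\dot H^{9/5}=\dot H^{1+4/5}$ and the equivalence~\e{tech:1}, one has $\lA Q_\varepsilon f\rA_{\dot H^{9/5}}^2\sim\iint\la\delta_h\nabla_x Q_\varepsilon f(x)\ra^2|h|^{-2-8/5}\,\dh\,\dx$. Expanding $\nabla_x\big((\Phi(\Delta_\alpha f)-1)\,\alpha\cdot\nabla_x\Delta_\alpha f\big)$ as a finite sum of products — each factor being $\Phi(\Delta_\alpha f)-1$, a rational function of $\Delta_{\pm\alpha}f$ of the type $\Delta_\alpha f/\langle\Delta_\alpha f\rangle^{5}$, or a finite difference $|\alpha|^{-1}\delta_\alpha\nabla_x^{k}f$ with $k\le 2$ — and then applying the discrete Leibniz rules $\delta_h(uv)=u\,\delta_h v+(\delta_h u)(\tau_h v)$, one uses that every rational-function factor is bounded pointwise by $1$ while $|\delta_h(\Phi(\Delta_\alpha f))|\lesssim|\alpha|^{-1}|\delta_h\delta_\alpha f|$. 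Each of the finitely many resulting terms is then a product of bounded coefficients and two or three factors of the form $\delta_\alpha\nabla_x^{k}f$ or $\delta_h\delta_\alpha\nabla_x^{k}f$; estimating these in $L^2_x$ by Hölder ($L^4\cdot L^4\subset L^2$ or $L^\infty\cdot L^4\cdot L^4\subset L^2$) and the embedding $\dot H^{1/2}(\xR^2)\hookrightarrow L^4(\xR^2)$, then using Minkowski in $\alpha$, Cauchy--Schwarz in $(\alpha,h)$, and the estimates~\e{tech:1} and~\e{tech:5} of Lemma~\ref{L:technical}, every term is controlled by a product of homogeneous Sobolev norms of $f$. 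The constraint $|\alpha|\le 2\varepsilon$ produces the factor $\varepsilon^{2\beta_0}$ upon choosing the Sobolev exponents with a $\beta_0$-margin, so that $\int_{|\alpha|\le 2\varepsilon}|\alpha|^{2\beta_0-2}(\cdots)\dalpha\sim\varepsilon^{2\beta_0}(\cdots)$; the derivative budget $9/5+1+2\beta_0=14/5+2\beta_0$ leaves the highest factor at $\dot H^{14/5+2\beta_0}\subset[\dot H^2,\dot H^3]$ while up to three further difference factors sit at the $\dot H^{2}$-level, yielding the bound $\varepsilon^{2\beta_0}(1+\lA f\rA_{\dot H^2})^{3}(1+\lA f\rA_{\dot H^3})$.

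The main obstacle is precisely this last step for $Q_\varepsilon f$: one must simultaneously (a) keep all estimates inside $\dot H^{2}$ and $\dot H^{3}$ — which, exactly as in~\S\ref{S:4}, is achieved by never estimating a finite difference of $\nabla f$ in $L^\infty$ but always in $L^4$ through $\dot H^{1/2}\hookrightarrow L^4$, and by exploiting that all $x$-derivatives of $\langle\Delta_\alpha f\rangle^{-3}$ are, up to uniformly bounded rational factors, finite differences $|\alpha|^{-1}\delta_\alpha\nabla_x^{k}f$ — and (b) distribute the derivatives so that the $\varepsilon$-gain is exactly $\varepsilon^{2\beta_0}$ and no factor exceeds $\dot H^{3}$. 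Once a consistent choice of exponents is fixed, the individual bounds are all of the type already established in Section~\ref{S:2} and used throughout Section~\ref{S:4}, so the rest is routine bookkeeping.
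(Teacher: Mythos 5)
The paper does not split off a linear part; it commutes $\D^{9/5}$ with the $\alpha$-integral and directly bounds $B_\alpha(x)=|\D^{9/5}(\alpha\cdot\nabla_x\Delta_\alpha f/\langle\Delta_\alpha f\rangle^3)(x)|$ via a pointwise fractional Leibniz/chain rule (the display~\e{Z3}), which introduces inner integrals in an auxiliary variable $\alpha'$ and local $\sup_{\alpha'}$-H\"older quotients; it then gains the $\varepsilon^{2\beta_0}$ from $\chi(|\alpha|/\varepsilon)|\alpha|^{-2}\le(2\varepsilon)^{2\beta_0}|\alpha|^{-2-2\beta_0}$ and controls the resulting $\sup$-quantities in $L^2$. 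Your route is genuinely different: you peel off the linear piece $T_\varepsilon f$, which you dispose of cleanly by Plancherel, Minkowski and $\int_{|\alpha|\le 2\varepsilon}|\alpha|^{2\beta_0-2}\,d\alpha\lesssim\varepsilon^{2\beta_0}$, and for the nonlinear piece $Q_\varepsilon f$ you avoid applying $\D^{9/5}$ directly, instead using the $\delta_h$-characterization of $\dot H^{9/5}=\dot H^{1+4/5}$ from~\e{tech:1} together with Leibniz, H\"older, and Sobolev. This is a perfectly viable alternative, and in fact your treatment of the linear part is tidier than what falls out of the paper's bound~\e{Z3} (where it appears as the term $\delta_\alpha\nabla\D^{9/5}f$).

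Two points in your sketch of $Q_\varepsilon f$ need more care than ``routine bookkeeping'' suggests, and you should spell them out. First, the cubic terms really do arise: after $\nabla_x$ and $\delta_h$, you get contributions such as $\delta_h(\Phi'(\Delta_\alpha f))\cdot\Delta_\alpha\nabla f\cdot\delta_\alpha\nabla f$, where $|\delta_h\Phi'(\Delta_\alpha f)|\lesssim|\delta_h\Delta_\alpha f|$, so you have three genuine difference factors and none of them may be put in $L^\infty$ without invoking $\|\nabla^2 f\|_{L^\infty}$, which is not controlled by $\dot H^2\cap\dot H^3$. Your stated H\"older toolkit, $L^4\cdot L^4\subset L^2$ and $L^\infty\cdot L^4\cdot L^4\subset L^2$, is therefore not sufficient for these terms; you need $L^4\cdot L^8\cdot L^8\subset L^2$ with $\dot H^{3/4}(\xR^2)\hookrightarrow L^8$, exactly as in the paper's estimate of $Q_4$ in \S\ref{S:4.4}. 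Second, the Cauchy--Schwarz split of the $\alpha$-integral has to be done with \emph{asymmetric} weights: the factor carrying the $\varepsilon$-gain must absorb the $|\alpha|$-weight from the other factor as well, otherwise one of the two resulting norms drops below $\dot H^2$ (e.g. a naive split can produce $\|f\|_{\dot H^{1.8}}$, which cannot be absorbed into $(1+\|f\|_{\dot H^2})^3(1+\|f\|_{\dot H^3})$). With these adjustments --- the $L^8$ H\"older exponents for cubic terms, and a balanced $|\alpha|^{-a_1}$/$|\alpha|^{-a_2}$ Cauchy--Schwarz with $a_1>a_2$ --- your scheme closes and delivers the same bound as the paper.
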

\begin{proof}
Set 
\begin{align*}
B_\alpha(x):=	\left|\D^{\frac{9}{5}}\left(	\frac{\alpha\cdot\nabla_x\Delta_\alpha f(.)}{\langle \Delta_\alpha f(.)\rangle^{3}}\right) (x)\right|
\end{align*}
We have to prove that
\begin{align}\label{Z4}
\varepsilon^{2\beta_0}
\lA\int |B_\alpha(.)| \frac{\dalpha}{|\alpha|^{2+2\beta_0}}\rA_{L^2}\lesssim \varepsilon^{\beta_0} \lA\sup_{\alpha} \frac{|B_\alpha| }{|\alpha|^{\beta_0}}\rA_{L^2}^{\frac{1}{2}}\lA\sup_{\alpha} \frac{|B_\alpha| }{|\alpha|^{3\beta_0}}\rA_{L^2}^{\frac{1}{2}}
\end{align} We have
\begin{align*}
B_\alpha(x)&\lesssim |\delta_{\alpha} \nabla \D^{\frac{9}{5}} f(x)|+|\delta_{\alpha}\nabla f(x)|\left|\D^{\frac{9}{5}}\left(\frac{1}{\langle \Delta_\alpha f(.)\rangle^{3}}\right)\right|\\&+\int |\delta_{\alpha'}\delta_{\alpha}\nabla_x f(x)||\delta_{\alpha'}  \left(\frac{1}{\langle \Delta_\alpha f(.)\rangle^{3}}\right)(x)| \frac{\dalpha'}{|\alpha'|^{2+\frac{9}{5}}}.
	\end{align*} 
Using the inequalities
\begin{align*}
|\delta_{\alpha'}  \left(\frac{1}{\langle \Delta_\alpha f\rangle^{3}}\right)(x)|&\lesssim |\delta_{\alpha'} \Delta_\alpha f(x)|,\\
\left|\D^{\frac{9}{5}}\left(\frac{1}{\langle \Delta_\alpha f\rangle^{3}}\right)(x)\right|
&\lesssim |\Delta_{\alpha} \D^{\frac{9}{5}}f(x)|\\
&\quad+\int |\delta_{\alpha'}\Delta_{\alpha} f(x)|^2+|\delta_{\alpha'}\Delta_{\alpha} f(x)|^3\frac{\dalpha'}{|\alpha'|^{2+\frac{9}{5}}},
\end{align*}
one obtains,
\begin{align}\label{Z3}
B_\alpha(x)
&\lesssim |\delta_{\alpha} \nabla \D^{\frac{9}{5}} f(x)|+|\delta_{\alpha}\nabla f(x)||\Delta_{\alpha} \D^{\frac{9}{5}}f(x)|\\
&\nonumber\quad+|\delta_{\alpha}\nabla f(x)|\int |\delta_{\alpha'}\Delta_{\alpha} f(x)|^2+|\delta_{\alpha'}\Delta_{\alpha} f(x)|^3\frac{\dalpha'}{|\alpha'|^{2+\frac{9}{5}}}\\
&\quad+\sup_{\alpha'}\frac{|\delta_{\alpha'} \Delta_\alpha f(x)|}{|\alpha'|^{\frac{9}{10}}}\int |\delta_{\alpha'}\delta_{\alpha}\nabla_x f(x)| \frac{\dalpha'}{|\alpha'|^{2+\frac{9}{10}}}.\nonumber
\end{align} 
It is easy to check that, for any $0<\varepsilon_0\leq 10^{-4}$, $\alpha\in \mathbb{R}^2$, 
\begin{align*}
&	\int |\delta_{\alpha'}\delta_{\alpha}\nabla_x f(x)| \frac{\dalpha'}{|\alpha'|^{2+\frac{9}{10}}}\lesssim_{\varepsilon_0} \left(\sup_{\alpha'} \frac{|\delta_{\alpha'}\delta_{\alpha}\nabla_x f(x)|}{|\alpha'|^{\frac{9}{10}-\varepsilon_0}}\right)^{\frac{1}{2}}\left(\sup_{\alpha'} \frac{|\delta_{\alpha'}\delta_{\alpha}\nabla_x f(x)|}{|\alpha'|^{\frac{9}{10}+\varepsilon_0}}\right)^{\frac{1}{2}},\\&
\int |\delta_{\alpha'}\Delta_{\alpha} f(x)|^2\frac{\dalpha'}{|\alpha'|^{2+\frac{9}{5}}}\lesssim_{\varepsilon_0} \sup_{\alpha'}\frac{|\delta_{\alpha'}\Delta_{\alpha} f(x)|}{|\alpha'|^{\frac{9}{10}-\varepsilon_0}} \sup_{\alpha'}\frac{|\delta_{\alpha'}\Delta_{\alpha} f(x)|}{|\alpha'|^{\frac{9}{10}+\varepsilon_0}},\\
&\int |\delta_{\alpha'}\Delta_{\alpha} f(x)|^3\frac{\dalpha'}{|\alpha'|^{2+\frac{9}{5}}}\lesssim_{\varepsilon_0} \left(\sup_{\alpha'}\frac{|\delta_{\alpha'}\Delta_{\alpha} f(x)|}{|\alpha'|^{\frac{3}{5}-\varepsilon_0}}\right)^{\frac{3}{2}}\left(\sup_{\alpha'}\frac{|\delta_{\alpha'}\Delta_{\alpha} f(x)|}{|\alpha'|^{\frac{3}{5}+\varepsilon_0}}\right)^{\frac{3}{2}}\cdot
\end{align*} 
Combining these inequalities with \eqref{Z3} and \eqref{Z4}, we get \e{n68}. This completes the proof. 
\end{proof}
\subsection{A priori estimate}
In this paragraph, we gather the {\em a priori} estimates that can be deduced from the previous estimates for the nonlinearity. 
Introduce the time dependent functions
\begin{align*}
	A_\phi=\blA \D^{2,\phi}f\brA_{L^2}^2,\quad
	B_\phi=\blA \D^{\frac52,\phi}f\brA_{L^2}^2,\quad Z_\phi=\blA \D^{3,\phi}f\brA_{L^2}^2
\end{align*}

\begin{proposition}\label{mainthm}
Assume that the weight $\phi(r)\les \log(2+r)$. Then  
\begin{multline}\label{Z7}
\frac{1}{2}\fract A_\phi(t)
+\frac{B_\phi(t)}{1+ \lA \nabla f\rA_{L^\infty}^3}+|\log (\varepsilon)|^{-1}Z_\phi(t)\\
\lesssim  A_\phi^{\frac{1}{2}}(1+A_\phi)B_\phi \left(\phi\left(\frac{B_\phi}{A_\phi}\right)\right)^{-1}+
\varepsilon^{2\beta_0} (1+A_\phi)^{2} (1+Z_\phi(t))^{\frac{4}{5}}.
\end{multline}
\end{proposition}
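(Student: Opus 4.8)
The plan is to differentiate $A_\phi$ in time along the flow of~\e{n2} and to control each resulting term by the estimates established above. First, since the right-hand side of~\e{Muskat} equals $-\Lr(f)f$ (see~\e{Muskat2}), the approximate equation~\e{n2} can be rewritten as
\[
\partial_t f=|\log(\varepsilon)|^{-1}\Delta f-\Lr(f)f-T_\varepsilon f,\qquad
T_\varepsilon f\defn\frac{1}{2\pi}\int_{\RR^2}\frac{\alpha\cdot\nabla_x\Delta_\alpha f}{\langle\Delta_\alpha f\rangle^{3}}\chi\Big(\frac{|\alpha|}{\varepsilon}\Big)\frac{\dalpha}{|\alpha|^2}.
\]
By Lemma~\ref{P:initiale} every norm below is finite and $C^1$ in time, and since $\D^{2,\phi}$ is self-adjoint with $\D^{2,\phi}\circ\D^{2,\phi}=\D^{4,\phi^2}$ we have $\tfrac12\fract A_\phi=\langle\D^{4,\phi^2}f,\partial_t f\rangle$, so that
\[
\tfrac12\fract A_\phi=|\log(\varepsilon)|^{-1}\langle\D^{4,\phi^2}f,\Delta f\rangle-\langle\Lr(f)f,\D^{4,\phi^2}f\rangle-\langle T_\varepsilon f,\D^{4,\phi^2}f\rangle.
\]
I would then handle the three terms separately.

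For the Laplacian term, Plancherel's identity gives $\langle\D^{4,\phi^2}f,\Delta f\rangle=-c_0Z_\phi$ with $c_0>0$; hence a positive multiple of $|\log(\varepsilon)|^{-1}Z_\phi$ — in particular $|\log(\varepsilon)|^{-1}Z_\phi$ itself — can be moved to the left-hand side (this is the only point where the value of the constant is relevant). For the quasilinear term, Theorem~\ref{mainthm0} applied to $f$, combined with the definitions $A_\phi=\lA\D^{2,\phi}f\rA_{L^2}^2$ and $B_\phi=\lA\D^{5/2,\phi}f\rA_{L^2}^2$, yields
\[
-\langle\Lr(f)f,\D^{4,\phi^2}f\rangle\le-\frac{B_\phi}{1+\lA\nabla f\rA_{L^\infty}^3}+CA_\phi^{\mez}(1+A_\phi)B_\phi\,\phi\Big(\tfrac{B_\phi^{1/2}}{A_\phi^{1/2}}\Big)^{-1}.
\]
After moving $B_\phi/(1+\lA\nabla f\rA_{L^\infty}^3)$ to the left, it remains only to match the error term with the one in~\e{Z7}: using $\phi\sim\kappa$ (see~\e{n60b}), that $\kappa$ is increasing, the doubling property of Definition~\ref{defi:admissible}, and that $r\mapsto\kappa(r)/\log(4+r)$ is decreasing, one gets $\phi(t^2)\les\phi(t)$ for all $t>0$, whence $\phi(B_\phi^{1/2}/A_\phi^{1/2})^{-1}\les\phi(B_\phi/A_\phi)^{-1}$.

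For the truncation error I would split $|\xi|^4\phi^2(|\xi|)=|\xi|^{9/5}\cdot\big(|\xi|^{11/5}\phi^2(|\xi|)\big)$ and apply Cauchy--Schwarz,
\[
\big|\langle T_\varepsilon f,\D^{4,\phi^2}f\rangle\big|\le\lA T_\varepsilon f\rA_{\dot H^{9/5}}\,\blA\D^{11/5,\phi^2}f\brA_{L^2}.
\]
Since $\phi(r)\les\log(2+r)$, one has $\phi^4(|\xi|)\les_\eta(1+|\xi|)^{\eta}$ for every $\eta>0$; hence, the low frequencies being harmless because there $|\xi|^{22/5}\le|\xi|^4$, we obtain $\blA\D^{11/5,\phi^2}f\brA_{L^2}\les\lA f\rA_{\dot H^{11/5+\eta}}+\lA f\rA_{\dot H^{2}}$, and interpolating $\dot H^{11/5+\eta}$ between $\dot H^2$ and $\dot H^3$ while using $\phi\ge1$ (so $\lA f\rA_{\dot H^2}\le A_\phi^{1/2}$ and $\lA f\rA_{\dot H^3}\le Z_\phi^{1/2}$) this is $\les A_\phi^{2/5-\eta/2}Z_\phi^{1/10+\eta/2}+A_\phi^{1/2}$ for $\eta$ small. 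On the other hand Lemma~\ref{Z6} gives $\lA T_\varepsilon f\rA_{\dot H^{9/5}}\les\varepsilon^{2\beta_0}(1+\lA f\rA_{\dot H^2})^3(1+\lA f\rA_{\dot H^3})\les\varepsilon^{2\beta_0}(1+A_\phi^{1/2})^3(1+Z_\phi^{1/2})$. Multiplying the two estimates and using the elementary bounds $A_\phi^{2/5-\eta/2}(1+A_\phi^{1/2})^3\les(1+A_\phi)^2$, $A_\phi^{1/2}(1+A_\phi^{1/2})^3\les(1+A_\phi)^2$ and $Z_\phi^{1/10+\eta/2}(1+Z_\phi^{1/2})\les(1+Z_\phi)^{4/5}$ (valid for $\eta$ small, since $3/5<4/5$ and $19/10<2$), one gets $|\langle T_\varepsilon f,\D^{4,\phi^2}f\rangle|\les\varepsilon^{2\beta_0}(1+A_\phi)^2(1+Z_\phi)^{4/5}$. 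Collecting the three bounds yields~\e{Z7}.

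All the genuinely new analytic content sits in Theorem~\ref{mainthm0} and Lemma~\ref{Z6}; the present proposition is essentially bookkeeping. I expect the main — though routine — difficulty to be precisely the last paragraph: one must choose the interpolation exponent so that Lemma~\ref{Z6} applies exactly at $\dot H^{9/5}$ while the logarithmic weight $\phi^2\les\log^2$ is absorbed into the margin $3/5<4/5$ in the power of $Z_\phi$, and one must keep in mind that $T_\varepsilon f$ is controlled by homogeneous norms of $f$ alone, even at low frequency, thanks to the $\varepsilon$-smoothing built into $T_\varepsilon$ (which is exactly what Lemma~\ref{Z6} encodes).
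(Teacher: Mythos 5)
Your proof is correct and follows essentially the same route as the paper: multiply the approximate equation by $\D^{4,\phi^2}f$, absorb the Laplacian term into $|\log\varepsilon|^{-1}Z_\phi$, invoke Theorem~\ref{mainthm0} for the quasilinear part, and treat the truncation error by Cauchy--Schwarz at order $9/5$ plus Lemma~\ref{Z6} together with the bound $\lA\D^{11/5,\phi^2}f\rA_{L^2}\les(1+A_\phi)^{1/2}(1+Z_\phi)^{3/10}$. You are slightly more careful than the written proof in two spots — you explicitly reconcile $\phi(\sqrt{B_\phi/A_\phi})^{-1}$ from the statement of~\e{Z5} with the $\phi(B_\phi/A_\phi)^{-1}$ appearing in~\e{Z7} via $\phi(t^2)\les\phi(t)$, and you spell out the Cauchy--Schwarz pairing — but the argument and its dependencies are the same.
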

\begin{proof}
Multiply \e{n2} by $\D^{4,\phi^2} f$ to obtain the identity
\begin{align*}
	&\frac{1}{2}	\fract \blA D^{2,\phi}f\brA_{L^2}^2
	+|\log (\varepsilon)|^{-1}\blA \D^{3,\phi} f\brA_{L^2}^2+	\int \Lr(f)f \D^{4,\phi^2} f \dx\\&=-
	\frac{1}{2\pi}\int \frac{\alpha\cdot\nabla_x\Delta_\alpha f(x)}{\langle \Delta_\alpha f(x)\rangle^{3}}
	\chi\left(\frac{|\alpha|}{\varepsilon}\right)\frac{\dalpha}{|\alpha|^2}  D|^{4,\phi^2} f \dx.
	\end{align*}
Thanks to Theorem \ref{mainthm0} and  Lemma \ref{Z6}, 
\begin{align*}
	\text{LHS of}~ \eqref{Z7}&\lesssim A_\phi^{\frac{1}{2}}(1+A_\phi)B_\phi \left(\phi\left(\frac{B_\phi}{A_\phi}\right)\right)^{-1}\\
	&\quad +
	\varepsilon^{2\beta_0} (1+A_\phi)^{\frac{3}{2}} (1+Z_\phi(t)^{\frac{1}{2}}) \lA\D^{\frac{11}{5},\phi^2} f\rA_{L^2}.
\end{align*}
Since $\phi(r)\les  \log(2+r)$ we have
\begin{align*}
\lA\D^{\frac{11}{5},\phi^2} f\rA_{L^2}
&\les \lA\D^{\frac{11}{5}} f\rA_{L^2}+\lA\D^{\frac{11}{5}+10^{-3}} f\rA_{L^2}\\
&\les (1+A_\phi)^\mez (1+Z_\phi)^{\frac{3}{10}}.
\end{align*}
This completes the proof.
\end{proof}

To conclude, it remains to control the factor 
$(1+\lA\nabla f\rA_{L^\infty})^{-1}$. 
To do so, we will use two different arguments depending on the assumptions on the initial data.\begin{proposition} 
\begin{enumerate}[(i)]
\item\label{item-i)} For any $t\leq |\log(\varepsilon)|$ with $\varepsilon\ll 1$ and for any 
$\beta_0=10^{-10}$, there holds
\begin{align}\label{Lipes2}
\sup_{\tau\in [0,t]}\lA \nabla f(\tau)\rA_{L^{\infty}}- \lA \nabla f_0\rA_{L^{\infty}}&\les
\int_{0}^{t}	B_\phi(\tau)\dtau\\
&\quad + \varepsilon^{\beta_0}\int_{0}^{t} A_\phi(\tau)\dtau+\varepsilon^{\beta_0/4}(1+\lA f_0\rA_{W^{1,\infty}}).\nonumber
\end{align} 
\item\label{item-ii)}If $\kappa(r)\ge \log(4+r)^a$ for some $a\ge 0$, then for $t\leq |\log(\varepsilon)|$
\begin{align*}
\lA \nabla f(t)\rA_{L^{\infty}}\lesssim 1+\lA f_0\rA_{L^\infty}+\varepsilon^{\beta_0} \int_{0}^{t} (A_\phi(\tau)+ Z_\phi(\tau))\dtau+A_\phi\log (2+B_\phi)^{\frac{1-2a}{2}}.
\end{align*}
\end{enumerate} 
\end{proposition}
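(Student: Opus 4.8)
This is direct bookkeeping from Corollary~\ref{Coro1}. The plan is to start from~\e{Lipes} and replace the unweighted norms by the weighted quantities $A_\phi$ and $B_\phi$. Since $\phi\ge1$ pointwise, formula~\e{norm:Sobolev} gives $\lA f\rA_{\dot H^{5/2}}^2\le B_\phi$ and $\lA f\rA_{\dot H^2}\le A_\phi^{1/2}\le\tfrac12(1+A_\phi)$, so that
\[
\int_0^t\lA f(\tau)\rA_{\dot H^{5/2}}^2\,\dtau\le\int_0^tB_\phi(\tau)\,\dtau,\qquad\varepsilon^{\beta_0}\int_0^t\lA f(\tau)\rA_{\dot H^2}\,\dtau\le\tfrac12\varepsilon^{\beta_0}t+\tfrac12\varepsilon^{\beta_0}\int_0^tA_\phi(\tau)\,\dtau.
\]
For $t\le|\log\varepsilon|$ and $\varepsilon\ll1$ one has $\varepsilon^{\beta_0}t\le\varepsilon^{\beta_0}|\log\varepsilon|\lesssim\varepsilon^{\beta_0/2}\le\varepsilon^{\beta_0/4}(1+\lA f_0\rA_{W^{1,\infty}})$, hence this term is absorbed into the last term of~\e{Lipes}. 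This proves the estimate of part $(i)$.

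\textbf{Part $(ii)$, low frequencies.} Here the point is to exploit the extra growth of the weight: from $\kappa(r)\ge\log(4+r)^a$ and the equivalence $\phi\sim\kappa$ of~\e{n60b} we get $\phi(r)\gtrsim\log(4+r)^a$. The plan is to decompose $\nabla f=\nabla f_{\mathrm{low}}+\nabla f_{\mathrm{high}}$, where $f_{\mathrm{low}}$ is the Littlewood--Paley projection of $f$ onto $\{|\xi|\le1\}$. For the low-frequency part, Bernstein's inequality gives $\lA\nabla f_{\mathrm{low}}\rA_{L^\infty}\lesssim\lA f\rA_{L^\infty}$; then~\e{a1-AN4} together with the Sobolev embedding and interpolation between $\dot H^2$ and $\dot H^3$ (which give $\lA f\rA_{\dot C^{1+\beta_0}}\lesssim\lA f\rA_{\dot H^2}^{1-c\beta_0}\lA f\rA_{\dot H^3}^{c\beta_0}\lesssim1+A_\phi+Z_\phi$) and $t\le|\log\varepsilon|$ yield
\[
\lA f(t)\rA_{L^\infty}\lesssim1+\lA f_0\rA_{L^\infty}+\varepsilon^{\beta_0}\int_0^t\big(A_\phi(\tau)+Z_\phi(\tau)\big)\,\dtau.
\]

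\textbf{Part $(ii)$, high frequencies.} For $\nabla f_{\mathrm{high}}$ one uses $\lA\nabla f_{\mathrm{high}}\rA_{L^\infty}\lesssim\int_{|\xi|\ge1}|\xi|\,|\hat f(\xi)|\,\dxi$ and splits this integral at $|\xi|=\Lambda:=2+B_\phi$. On $\{|\xi|>\Lambda\}$, factoring out $|\xi|^{5/2}\phi(|\xi|)$ and applying Cauchy--Schwarz with $\phi\ge1$ gives
\[
\int_{|\xi|>\Lambda}|\xi|\,|\hat f|\,\dxi\le\Big(\int_{|\xi|>\Lambda}\frac{\dxi}{|\xi|^{3}\phi(|\xi|)^2}\Big)^{1/2}B_\phi^{1/2}\lesssim\Lambda^{-1/2}B_\phi^{1/2}\lesssim1.
\]
On $\{1\le|\xi|\le\Lambda\}$, factoring out $|\xi|^{2}\phi(|\xi|)$ and applying Cauchy--Schwarz with $\phi(|\xi|)\gtrsim\log(4+|\xi|)^a$ gives
\[
\int_{1\le|\xi|\le\Lambda}|\xi|\,|\hat f|\,\dxi\le\Big(\int_{1\le|\xi|\le\Lambda}\frac{\dxi}{|\xi|^{2}\phi(|\xi|)^2}\Big)^{1/2}A_\phi^{1/2}\lesssim\Big(\int_1^\Lambda\frac{\dr}{r(1+\log r)^{2a}}\Big)^{1/2}A_\phi^{1/2},
\]
and the change of variables $u=\log r$ shows the one-dimensional integral is $\lesssim\log(2+\Lambda)^{1-2a}=\log(2+B_\phi)^{1-2a}$ when $a<1/2$ and $\lesssim1$ when $a\ge1/2$. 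Hence $\lA\nabla f_{\mathrm{high}}\rA_{L^\infty}\lesssim1+A_\phi^{1/2}\log(2+B_\phi)^{(1-2a)/2}$, and adding the low- and high-frequency bounds yields the estimate of part $(ii)$ (which in fact holds with the $\dot H^2$-factor to the power $1/2$).

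\textbf{Expected main obstacle.} Part $(i)$ is essentially bookkeeping. The substance is the logarithmic frequency estimate in part $(ii)$: one must use the admissibility of $\kappa$ (Lemma~\ref{L:2.6}) and the equivalence $\phi\sim\kappa$ to turn the hypothesis into $\phi(r)\gtrsim\log(4+r)^a$, choose the cutoff $\Lambda$ in terms of $B_\phi$ so that the tail $\{|\xi|>\Lambda\}$ contributes only an $O(1)$ term, and check that the integral over $\{1\le|\xi|\le\Lambda\}$ reproduces exactly the power $\log(2+B_\phi)^{(1-2a)/2}$. One also has to take care to route the genuine low frequencies of $\nabla f$ through $\lA f\rA_{L^\infty}$ and to control $\lA f(t)\rA_{L^\infty}$ via~\e{a1-AN4} (rather than~\e{a2-AN4}), so as not to lose a naked power of $t$.
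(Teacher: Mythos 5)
Your argument is correct and follows essentially the same route as the paper: part $(i)$ is exactly the bookkeeping the paper does from Corollary~\ref{Coro1}, and part $(ii)$ combines the interpolation bound $\lA \nabla f\rA_{L^\infty}\lesssim 1+\lA f\rA_{L^\infty}+(\text{weighted energy})$ with the $L^\infty$-growth estimate~\e{a1-AN4}. The only real deviation is that the paper invokes its Lemma~\ref{L:3.5}$(ii)$ (estimate~\e{linf'}) as a black box, whereas you re-derive it inline by splitting the frequency integral at $\Lambda=2+B_\phi$ and applying Cauchy--Schwarz on each piece; your low-frequency reduction uses a Littlewood--Paley projection where the paper's Lemma~\ref{L:3.5} uses $f-f\star\chi_1$, which is the same mechanism. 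Your re-derivation is in fact slightly sharper: Cauchy--Schwarz naturally produces $A_\phi^{1/2}\log(2+B_\phi)^{(1-2a)/2}$ rather than $A_\phi\log(2+B_\phi)^{(1-2a)/2}$, and you correctly note this; the paper's statement of~\e{linf'} with the full power $A_\phi$ appears to be a (harmless) looseness, since $A_\phi^{1/2}\le\tfrac12(1+A_\phi)$ lets one recover the stated form, and $a=3/8<1/2$ in the final application so the logarithmic factor is nontrivial and your tighter bound is strictly stronger for the iteration.
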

\begin{proof} Statement $(\ref{item-i)})$ is obtained from Corollary \ref{Coro1}. On the other hand, by \eqref{linf'} and \eqref{a1-AN4} we have for any $t\leq |\log(\varepsilon)|$
	\begin{align*}
	\lA \nabla f(t)\rA_{L^{\infty}}
		&\lesssim 1+ \lA f(t)\rA_{L^\infty}+A_\phi\log (2+B_\phi)^{\frac{1-2a}{2}}\\&\lesssim  1+ \lA f_0\rA_{L^\infty}+\varepsilon^{\beta_0} \int_{0}^{t}\lA f(\tau)\rA_{\dot H^{2+\beta_0}}\dtau+C_0A_\phi\log (2+B_\phi)^{\frac{1-2a}{2}}
		\\&\lesssim 1+\lA f_0\rA_{L^\infty}+\varepsilon^{\beta_0} \int_{0}^{t} (A_\phi(\tau)+ Z_\phi(\tau))\dtau+A_\phi\log (2+B_\phi)^{\frac{1-2a}{2}}.
	\end{align*}
This implies $(\ref{item-ii)})$ which completes the proof.
\end{proof}
\subsection{Choice of the weight $\phi$}

We will apply the previous {\em a priori} estimate with three different choices 
for the weight function $\phi$. In particular, we chose 
\begin{enumerate}
\item $\phi=1$: to prove Theorem~\ref{main-2} about global well-posedness for small data in $\dot{H}^2(\xR^2)\cap W^{1,\infty}(\xR^2)$. 
\item $\phi(r)= \log(2+r)^a$: to prove Theorem~\ref{theo:main3} about local and global well-posedness in $H^{2,\log^a}(\xR^2)$.
\end{enumerate}
To prove Theorem~\ref{main}, about large data in $H^2(\xR^2)\cap W^{1,\infty}(\xR^2)$, following the strategy introduced in \cite{AN2}, 
we need to consider a weight 
$\phi=\phi_0$ depending on the initial data. To do so, we use the following

\begin{lemma}\label{L:critical}
For any function $f_0$ in $\dot{H}^2(\xR^2)$ there exists an admissible weight $\kappa$ 
such that $f_0$ belongs to the space $\dot{H}^{2,\phi}(\xR^2)$. 
\end{lemma}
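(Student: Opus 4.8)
The plan is to construct the admissible weight $\kappa$ directly from the Fourier transform of $f_0$, using a dyadic decomposition in frequency. Since $f_0\in\dot H^2(\xR^2)$, we have $\sum_{j\in\ZZ} 2^{4j}\|\hat f_0\|_{L^2(C_j)}^2<\infty$, where $C_j=\{\xi:\ 2^j\le\la\xi\ra<2^{j+1}\}$. Writing $a_j=2^{4j}\|\hat f_0\|_{L^2(C_j)}^2$, the convergence of $\sum_j a_j$ is what we must "upgrade" to $\sum_j w_j^2\, a_j<\infty$ for a suitable increasing sequence $w_j\to\infty$; this is the classical fact that any convergent series of nonnegative terms is dominated by a convergent series with a weight tending to infinity. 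The quantitative version I would use: set $R_j=\sum_{k\ge j}a_k$ (so $R_j\downarrow 0$) and put $w_j^2=\min\{R_j^{-1/2},\ \text{(something growing at most logarithmically)}\}$ — but here is the subtlety, which is also where the real constraint lies.

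**The main obstacle.** An arbitrary weight $w_j\to\infty$ will \emph{not} do: $\kappa$ must be \emph{admissible} in the sense of Definition~\ref{defi:admissible}, i.e. increasing, doubling ($\kappa(2r)\le c_0\kappa(r)$), and with $r\mapsto\kappa(r)/\log(4+r)$ decreasing. The doubling condition forces $w_{j+1}\le c_0 w_j$ (sublinear growth in $j$ is fine, but no jumps), and the last condition forces $w_j\lesssim \log(4+2^j)\sim j$ for large $j$ — so $\kappa$ can grow at most like $\log\la\xi\ra$. Thus the construction must produce a weight that (a) is bounded by $C\log(4+r)$, (b) is doubling, and (c) still satisfies $\sum_j w_j^2 a_j<\infty$. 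The hard part is reconciling (c) with the ceiling imposed by (a). The point is that we have freedom: we only need $w_j\to\infty$, no rate, so we can always slow the weight down to respect all three structural constraints while keeping the weighted sum finite. Concretely, I would first pick \emph{any} nondecreasing $v_j\to+\infty$ with $\sum v_j^2 a_j<\infty$ (e.g. via the $R_j$ trick: $v_j^2=R_{j_0}^{-1/2}$ on the block where $R_j$ first drops below $2^{-n}$ gives $\sum v_j^2 a_j\le\sum 2^{-n/2}\cdot(\text{const})<\infty$), then replace it by $\tilde v_j=\min\{v_j,\ \log(4+2^{|j|})\}$, which still tends to infinity, and finally \emph{regularize}: define $\kappa(r)$ by interpolating $\log$-linearly between the values at $r=2^j$, after replacing $\tilde v_j$ by its "doubling envelope" $\hat v_j=\min_{k\le j}(\text{suitable})$ — more simply, take $\kappa(r)=\inf\{\,g(r):\ g \text{ admissible},\ g(2^j)\ge \tilde v_j\ \forall j\le 0\,\}$ restricted so that the relevant low/high frequency behavior is controlled.

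**Carrying it out.** First I would reduce to the high-frequency regime: for $\la\xi\ra\le 1$ one may simply set $\kappa\equiv 1$ there, since $\|\hat f_0\|_{L^2(\la\xi\ra\le1)}\le\|f_0\|_{\dot H^2}$ already (no weight needed on bounded frequencies as far as the $\dot H^{2,\phi}$ norm's homogeneous part is concerned — and $\dot H^{2,\phi}$ only involves $\blA|\xi|^2\phi(\la\xi\ra)\hat u\brA_{L^2}$, which near $\xi=0$ is harmless). So the task is genuinely about $j\to+\infty$: make $\kappa$ increasing on $[0,\infty)$, equal to a constant near $0$, doubling, with $\kappa(r)/\log(4+r)$ decreasing, $\kappa\to\infty$, and $\sum_{j\ge0}\kappa(2^j)^2 2^{4j}\|\hat f_0\|_{L^2(C_j)}^2<\infty$. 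Take $w_j$ as above, cap it by $c\log(4+2^j)$, pass to $\bar w_j=\min_{k\ge j}$ over a smoothed version to enforce that $\bar w_j/\log(4+2^j)$ is nonincreasing while $\bar w_j$ is still nondecreasing and still $\to\infty$ (possible because the cap grows like $j$ and $w_j\to\infty$ however slowly), and enforce doubling by replacing $\bar w_{j+1}$ with $\min\{\bar w_{j+1},\ c_0\bar w_j\}$ iteratively — this only decreases the sequence, preserves monotonicity and divergence to $\infty$, and yields the doubling bound. Then define $\kappa$ on $[2^j,2^{j+1}]$ by the log-affine interpolation $\log\kappa(r)=\log\bar w_j+\frac{\log r-j\log2}{\log2}(\log\bar w_{j+1}-\log\bar w_j)$; one checks directly that this $\kappa$ is increasing, satisfies (H2) with a constant comparable to $c_0$, and satisfies (H3) because it was built to. Finally, by Proposition~\ref{P:3.5}(ii) the associated $\phi$ from \e{n10} satisfies $\phi\sim\kappa$, so $\blA|\xi|^2\phi(\la\xi\ra)\hat f_0\brA_{L^2}^2\sim\sum_j\kappa(2^j)^2 2^{4j}\|\hat f_0\|_{L^2(C_j)}^2<\infty$, i.e. $f_0\in\dot H^{2,\phi}(\xR^2)$. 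The only place requiring genuine care is verifying that all three admissibility clamps can be applied \emph{simultaneously} without killing the divergence $w_j\to\infty$; this works precisely because the logarithmic ceiling $\log(4+2^j)\sim j$ itself diverges, leaving room underneath it for a slowly-growing admissible weight.
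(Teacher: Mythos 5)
Your proof reconstructs, from scratch, precisely the auxiliary result that the paper invokes: in the published argument the authors simply cite \cite{AN1} for the existence, given any $\omega\in L^1(\xR^2)$, of an increasing doubling weight $\eta\ge1$ with $\eta(r)/\log(4+r)$ decreasing, $\eta\to\infty$, and $\int\eta(|x|)\omega(x)\,\dx<\infty$; they then take $\omega(\xi)=|\xi|^4|\hat f_0(\xi)|^2$, set $\kappa=\sqrt\eta$, and finish via Proposition~\ref{P:3.5}(ii). Your dyadic construction is the unpacking of that cited lemma, and the core idea --- upgrade $\sum a_j<\infty$ to $\sum w_j^2 a_j<\infty$ for some $w_j\uparrow\infty$, then clamp $w_j$ below the logarithmic ceiling and regularize to enforce (H1)--(H3), which is possible because the ceiling itself diverges --- is exactly right. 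Two small remarks on the regularization. First, (H2) is actually automatic once you have (H1) and (H3), since $\log(4+2r)/\log(4+r)$ is bounded, so that step is redundant. Second, the interpolation claim ``satisfies (H3) because it was built to'' is slightly too quick: the sequence $w_j/\log(4+2^j)$ being nonincreasing only controls (H3) at dyadic points, and if you interpolate affinely in $\log r$ the inequality $\kappa'/\kappa\le\ell'/\ell$ (with $\ell(r)=\log(4+r)$) becomes borderline, since $\log(w_{j+1}/w_j)\le\log(\ell_{j+1}/\ell_j)\approx1/j$ while $\log 2/\ell_j\approx1/j$. The fix is to interpolate affinely in the variable $t=\log(4+r)$, i.e.\ set $\kappa(r)=G(\log(4+r))$ with $G$ piecewise affine and $G(\ell_j)=w_j$; a direct computation shows $G$ nondecreasing and $G(t)/t$ nonincreasing are then \emph{equivalent} to the discrete conditions on $w_j$, so (H1) and (H3) hold exactly and (H2) follows. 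With that adjustment the construction is complete and the remainder of your argument (Proposition~\ref{P:3.5}(ii), dyadic comparison of norms) is correct.
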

\begin{proof}
It is proved\footnote{This result is proved in \cite{AN1} for $d=1$ only, 
but the proof clearly applies in any dimension.} in \cite{AN1} that, 
for any nonnegative integrable function 
$\omega\in L^1(\xR^2)$, there exists a function  $\eta\colon[0,\infty) \to [1,\infty)$ satisfying the following properties: 
\begin{enumerate}
\item $\eta$ is increasing and $\lim\limits_{r\to \infty}\eta(r)=\infty$,
\item $\eta(2r)\leq 2\eta(r)$ for any $r\geq 0$,
\item $\omega$ satisfies the enhanced integrability condition:
\begin{equation}\label{enhanced}
\int_{\xR^2} \eta(|x|) \omega(x) \dx<\infty,
\end{equation}
\item moreover, the function $r\mapsto \eta(r)/\log(4+r)$ is decreasing on  $[0,\infty)$.
\end{enumerate}
Then we apply this result with 
$\omega(\xi)=\la\xi\ra^4\bla\hat{f}(\xi)\bra^2$ and 
set $\kappa(r)=\sqrt{\eta(r)}$. Remembering that $\phi\sim\kappa$ (see~\e{n60b}), 
we see immediately that the enhanced integrability condition~\e{enhanced} implies that 
$f_0$ belongs to ${H}^{2,\phi}(\xR^2)$.
\end{proof}

\subsection{End of the proof}
The end of the proof of the three main results stated in the introduction is exactly similar to that of our previous work \cite{AN1,AN2}. As the last works are 
self-contained, to avoid repetitions, we 
will simply explain in this paragraph how we use weighted energy estimates to obtain uniform estimates.

$\bullet$ Proof of uniform estimates for small data in $W^{1,\infty}(\xR^2)\cap \dot H^2$. 

Let $\phi\equiv 1$ and $A_\phi=A,B_\phi=B,Z_\phi=Z$.	We have for $\beta_0=10^{-10}$ \begin{align*}
\frac{1}{2}\fract A(t)
&+\frac{B(t)}{1+ \lA\nabla f\rA_{L^\infty}^3}+|\log (\varepsilon)|^{-1}Z(t)\\&\leq C_0 A(t)^{\frac{1}{2}}(1+A(t))B(t)+C_0
\varepsilon^{2\beta_0} (1+A(t))^{2} (1+Z(t))^{\frac{4}{5}}.
\end{align*}
together with the following estimate for the slope
\begin{align*}
\sup_{\tau\in [0,t]}\lA \nabla f(\tau)\rA_{L^{\infty}}-\lA \nabla f_0\rA_{L^{\infty}}
&\leq C_0 
\int_{0}^{t}	B(\tau)\dtau\\
&\quad+ C_0\varepsilon^{\beta_0}\int_{0}^{t} A(\tau)\dtau+\varepsilon^{\beta_0/4}(1+\lA f_0\rA_{W^{1,\infty}}).
\end{align*} 
for any $t\leq |\log(\varepsilon)|$. 
Therefore, for $ (1+	\lA \nabla f_0\rA_{L^{\infty}})^3 	A(0)^{\frac{1}{2}}\ll 1$ and $\varepsilon\ll 1$, 
we have for any $t\leq |\log(\varepsilon)|$, 
\begin{align*}
	\fract A(t)+B(t)+|\log (\varepsilon)|^{-1}Z(t)\leq \varepsilon^{\frac{\beta_0}{2}},
\end{align*}
and 
\begin{align*}
	\sup_{\tau\in [0,t]}\lA \nabla f(\tau)\rA_{L^{\infty}}-&\lA \nabla f_0\rA_{L^{\infty}}
	\leq C_0 A(0)+\varepsilon^{\frac{\beta_0}{4}}.
\end{align*}

$\bullet$ Proof of uniform estimates for large data in $W^{1,\infty}(\xR^2)\cap \dot H^2$.

Consider the weight $\phi$ given by Lemma~\ref{L:critical}. 
We have for $\beta_0=10^{-10}$ \begin{align*}
		\frac{1}{2}\fract A_\phi(t)
		&+\frac{B_\phi(t)}{1+ \lA\nabla f\rA_{L^\infty}^3}+|\log (\varepsilon)|^{-1}Z_\phi(t)\\
		&\leq \varepsilon_1B_\phi(t)+\mathcal{F}\left(\frac{1}{\varepsilon_1}+ A_\phi(t)\right)+C_0
		\varepsilon^{2\beta_0} (1+A_\phi(t))^{2} (1+Z_\phi(t))^{\frac{4}{5}}.
	\end{align*}
On the other hand, 
	\begin{align*}
		\sup_{\tau\in [0,t]}\lA \nabla f(\tau)\rA_{L^{\infty}}-\lA \nabla f_0\rA_{L^{\infty}}
		&\leq C_0 
		\int_{0}^{t}	B_\phi(\tau)\dtau\\
&		+ C_0\varepsilon^{\beta_0}\int_{0}^{t} A_\phi(\tau)\dtau+C_0\varepsilon^{\beta_0/4}(1+\lA f_0\rA_{W^{1,\infty}})
	\end{align*} 
	for any $t\leq |\log(\varepsilon)|$. Therefore, there exists $T\ll 1$ such that for any $\varepsilon\ll 1$
	\begin{align*}
	\sup_{\tau\in [0,T]} A_\phi(\tau)+\int_{0}^{T} B_\phi(\tau)\dtau+|\log (\varepsilon)|^{-1}\int_0^T Z_\phi(\tau)\dtau\leq 10 A_\phi(0)
	\end{align*}
and 
\begin{align*}
	\sup_{\tau\in [0,t]}\lA \nabla f(\tau)\rA_{L^{\infty}}-&\lA \nabla f_0\rA_{L^{\infty}}\leq 10 A(0).
\end{align*}

$\bullet$ Proof of uniform estimates in $\dot{H}^{2,\log^a}(\xR^2)$.

Set $a=3/8$. It follows from Lemma~\ref{L:critical} that there exists 
$\widetilde{k}$ satisfying $\lim_{r\to +\infty}\widetilde{k}(r)=+\infty$ and such that 
$$
 \int_{\xR^2} |\xi|^4\big(\log(2+\la \xi\ra)\big)^{2a}\widetilde{k}(|\xi|)^2
\bla \hat{u}(\xi)\bra^2\dxi<+\infty.
$$
We consider the admissible weight $\kappa$ defined by $\kappa(r)=\log(2+r)^{a}\widetilde{k}(r)$ and then write
\be\label{lastfinal}
\begin{aligned}
	&\frac{1}{2}\fract A_\phi(t)
	+\frac{B_\phi(t)}{1+ \lA\nabla f\rA_{L^\infty}^3}+|\log (\varepsilon)|^{-1}Z_\phi(t)\\&\quad\quad\lesssim  A_\phi^{\frac{1}{2}}(1+A_\phi)B_\phi \left(\phi\left(\frac{B_\phi}{A_\phi}\right)\right)^{-1}+
	\varepsilon^{2\beta_0} (1+A_\phi)^{2} (1+Z_\phi(t))^{\frac{4}{5}},\nonumber
\end{aligned}
\ee
and  for 
$t\leq |\log(\varepsilon)|$
\begin{align*}
	\lA \nabla f(t)\rA_{L^{\infty}}\lesssim 1+\lA f_0\rA_{L^\infty}+\varepsilon^{\beta_0} \int_{0}^{t} (A_\phi(\tau)+ Z_\phi(\tau))\dtau+A_\phi\log (2+B_\phi)^{\frac{1}{8}}.
\end{align*}
From these estimates, we obtain local existence with $f_0\in \dot{H}^{2,\log^a}(\xR^2)$. To obtain global existence under a smallness assumption $A_{\log^a}(0)\ll 1$, we apply \e{lastfinal} where now $\kappa$ is simply 
$\kappa(r)=\log(2+r)^{a}$.

\appendix

\section{Weighted fractional laplacians}

\subsection{Proof of Proposition~\ref{P:3.5}}\label{A:2}

Proposition~\ref{P:3.5} is proved in \cite{AN1}. However, the proof in the latter reference 
is written only in dimension $d=1$ and for $1<s<2$. 
For the sake of completeness, we reproduce this proof to 
verify that it applies for $d=2$ and for any $0<s<2$. \\
$i)$ Notice that, for $h\in\xR^2$, the Fourier transform of $x\mapsto 2g(x)-g(x+h)-g(x-h)$ 
is given by $(2-2\cos(\xi\cdot h))\hat{g}(\xi)$. 
So, Plancherel's identity implies that
$$
\lA g\rA_{s,\kappa}^2
=\int_{\xR^2} I(\xi)\bla \hat{g}(\xi)\bra^2 \dxi,
$$
where
$$
I(\xi)=\frac{1}{\pi^2}\int_{\xR^2}  ( 1-\cos(\xi\cdot h))^2 \kappa^2\left(\frac{1}{|h|}\right)\frac{1}{|h|^{2+2s}}\dh.
$$
We have to prove that
\begin{equation}\label{Z30}
c|\xi|^{2s}\phi(|\xi|)^2\le I(\xi)\le C|\xi|^{2s}\phi(|\xi|)^2,
\end{equation}
for some constant $c,C$ independent of $\xi\in\xR$. 

We begin by proving the bound from above. Since 
$\la 1-\cos(\theta)\ra\le \min \{ 2,\theta^2\}$ for all $\theta\in \xR$, we have
\begin{align*}
	I(\xi)\lesssim \int \min \{ 1,(|\xi||h|)^4\} \kappa^2\left(\frac{1}{|h|}\right)\frac{dh}{|h|^{2+2s}}.
\end{align*}
By Lemma \ref{L:2.6}, one has for $\varepsilon_0>0$
\begin{align*}
	(\min \{ 1,|\xi||h|\})^{\varepsilon_0} \kappa^2\left(\frac{1}{|h|}\right)\lesssim_{\varepsilon_0} \kappa^2(|\xi|).
\end{align*}
It follows for $\varepsilon_0<<1$
\begin{align*}
	I(\xi)\lesssim_{\varepsilon_0}  \kappa^2(|\xi|)\int \min \{ 1,(|\xi||h|)^{4-\varepsilon_0}\} \frac{dh}{|h|^{2+2s}}\lesssim_{\varepsilon_0} |\xi|^{2s}\phi(|\xi|)^2.
\end{align*}
The proof of the lower bound is straightforward. 

Recalling the notation $\check{\xi}=\xi/|\xi|$, and integrating in polar coordinate, we find that
\begin{align*}
I(\xi)
&=\frac{2}{\pi}\int_0^{+\infty}(1-\cos(\la \xi\ra r))^2  \kappa^2\left(\frac{1}{r}\right)\frac{1}{r^{1+2s}}\dr
\gtrsim \int_{\frac{2\pi}{5}\le r\la \xi\ra \le \frac{3\pi}{5}}\kappa^2(r)\frac{\dr}{r^{1+2s}}\\
&\gtrsim \bigg(\int_{\frac{\pi}{3}\le r\la \xi\ra \le \frac{3\pi}{5}}\dr\bigg)\kappa^2\left(\frac{1}{|\xi|}\right)\la \xi\ra^{1+2s}
\gtrsim \kappa^2\left(\frac{1}{|\xi|}\right)\la \xi\ra^{2s}.
\end{align*}

This completes the proof of \e{Z30}. The proof of statement $ii)$ is similar.

\subsection{Interpolation estimates}
In this paragraph, we collect various interpolation estimates. These results are not new: we follow closely the proofs of similar results in \cite{AN1} whhi

Consider a function $f$ and a weight $\phi$. 
We want to control the homogeneous Sobolev norms in terms of the following 
quantities
\be\label{n67}
\begin{aligned}
A_\phi&=\blA \D^{2,\phi}f\brA_{L^2}^2,\quad
B_\phi&=\blA \D^{\frac52,\phi}f\brA_{L^2}^2,\quad
\mu_\phi&=\left(\phi\left(\frac{B_\phi}{A_\phi}\right)\right)^{-1} .
\end{aligned}
\ee
\begin{lemma}\label{L:3.5}
$i)$ For all $s\in (2,5/2)$, 
there exists a positive constant $C$ such that,
\begin{align}
\lA f\rA_{\dot H^s}\le C\mu_\phi A_\phi^{\frac52-s}  B_\phi^{s-2}.
\end{align}
$ii)$ Assume that $\phi$ is of the form~\e{n10-ter} where $\kappa$ is an admissible weight 
satisfying $\kappa(r)\ge \log(4+r)^a$ for some $a\ge 0$. Then
\be
\Vert \nabla f\Vert _{L^\infty}\lesssim 1+ \lA f\rA_{L^\infty}+A_\phi\log (2+B_\phi)^{\frac{1-2a}{2}}.\label{linf'}
\ee

\end{lemma}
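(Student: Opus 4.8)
The plan for $i)$ is a one‑parameter frequency splitting. Writing $\|f\|_{\dot H^s}^2=\int_{\xR^2}|\xi|^{2s}|\hat f(\xi)|^2\,\dxi$, I would fix the radius $\Lambda:=B_\phi/A_\phi$ and split into $\{|\xi|\le\Lambda\}$ and $\{|\xi|>\Lambda\}$. On the high‑frequency piece, since $s<5/2$ and $\phi$ is increasing, $|\xi|^{2s}=\big(|\xi|^5\phi(|\xi|)^2\big)\,|\xi|^{2s-5}\phi(|\xi|)^{-2}\le\Lambda^{2s-5}\phi(\Lambda)^{-2}\,|\xi|^5\phi(|\xi|)^2$, so this piece is $\le\Lambda^{2s-5}\phi(\Lambda)^{-2}B_\phi$. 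On the low‑frequency piece I would use that $\phi\sim\kappa$ (Proposition~\ref{P:3.5}~$ii)$) together with the almost‑monotonicity estimate~\e{est:kappatwo}: applied with $\sigma=2s-4>0$ it gives $\kappa(\Lambda)^2\lesssim(\Lambda/|\xi|)^{2s-4}\kappa(|\xi|)^2$ for $|\xi|\le\Lambda$, i.e. $|\xi|^{2s-4}\phi(|\xi|)^{-2}\lesssim\Lambda^{2s-4}\phi(\Lambda)^{-2}$, so this piece is $\lesssim\Lambda^{2s-4}\phi(\Lambda)^{-2}A_\phi$. With $\Lambda=B_\phi/A_\phi$ both error terms equal $A_\phi^{5-2s}B_\phi^{2s-4}$, while $\phi(\Lambda)^{-2}=\mu_\phi^2$; hence $\|f\|_{\dot H^s}^2\lesssim\mu_\phi^2A_\phi^{5-2s}B_\phi^{2s-4}$, which is the claimed bound after taking square roots.

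For $ii)$ I would first isolate the low frequencies by a smooth cutoff $\eta$ with $\eta=1$ on $\{|\xi|\le1\}$ and $\supp\eta\subset\{|\xi|\le2\}$: then $f_{\rm low}:=\check\eta\star f$ satisfies $\|\nabla f_{\rm low}\|_{L^\infty}\le\|\nabla\check\eta\|_{L^1}\|f\|_{L^\infty}\lesssim\|f\|_{L^\infty}$, while $\|\nabla(f-f_{\rm low})\|_{L^\infty}\le\int_{|\xi|\ge1}|\xi|\,|\hat f(\xi)|\,\dxi$. I would estimate the last integral dyadically, over annuli $\{2^k\le|\xi|<2^{k+1}\}$, $k\ge0$. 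By Cauchy--Schwarz and $\int_{2^k\le|\xi|<2^{k+1}}|\xi|^{-2}\phi(|\xi|)^{-2}\,\dxi\lesssim\phi(2^k)^{-2}$ (since $\phi$ is increasing), the annulus at scale $k$ is bounded by $\phi(2^k)^{-1}a_k^{1/2}$ with $\sum_k a_k\le A_\phi$; alternatively, using $\phi\ge1$ and $\int_{2^k\le|\xi|<2^{k+1}}|\xi|^{-3}\,\dxi\lesssim2^{-k}$, it is bounded by $2^{-k/2}b_k^{1/2}$ with $\sum_k b_k\le B_\phi$. Choosing the cut‑off $k_0\simeq\log(2+B_\phi)$, I would sum the first bound over $1\le k\le k_0$, using $\phi(2^k)\gtrsim\kappa(2^k)\ge\log(4+2^k)^a\gtrsim k^a$, to obtain $\big(\sum_{1\le k\le k_0}\phi(2^k)^{-2}\big)^{1/2}A_\phi^{1/2}\lesssim k_0^{(1-2a)/2}A_\phi^{1/2}$ when $a<1/2$ (the cases $a\ge1/2$ being easier, with $O(\sqrt{\log k_0})$ resp. $O(1)$ in place of $k_0^{(1-2a)/2}$), and sum the second bound over $k>k_0$ to get $\lesssim2^{-k_0/2}B_\phi^{1/2}\lesssim1$. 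Collecting the three contributions gives $\|\nabla f\|_{L^\infty}\lesssim1+\|f\|_{L^\infty}+A_\phi^{1/2}\,\log(2+B_\phi)^{(1-2a)/2}$, which gives~\e{linf'} upon noting $A_\phi^{1/2}\le1+A_\phi$.

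The main obstacle, in both parts, is the careful bookkeeping of the slowly varying weight $\phi$. In $i)$ one must retain, rather than discard, the factor $\phi(\Lambda)^{-2}$ through the optimization; this is precisely what Lemma~\ref{L:2.6} supplies, since it says that $r\mapsto r^\sigma\kappa(1/r)^2$ behaves like an increasing function, so that $\Lambda=B_\phi/A_\phi$ is the correct balance radius and produces exactly $\mu_\phi^2$. In $ii)$ the logarithmic gain $\log(2+B_\phi)^{(1-2a)/2}$ comes precisely from the summability rate $\phi(2^k)^{-2}\lesssim k^{-2a}$ over the dyadic range $1\le k\le k_0$, and one has to calibrate $k_0\simeq\log(2+B_\phi)$ so that the $B_\phi$‑controlled tail is $O(1)$; getting the exponent right — and tracking the power of $A_\phi$, which a linear argument only bounds through $A_\phi^{1/2}$ — is the delicate point. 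The only further technicality is the separate convolution estimate for the very low frequencies, which is what produces the term $\|f\|_{L^\infty}$.
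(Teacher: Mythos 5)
Your proof of part $i)$ is essentially identical to the paper's: same one-parameter frequency splitting at $\Lambda=B_\phi/A_\phi$, same use of monotonicity of $\phi$ on the high-frequency side and of Lemma~\ref{L:2.6} (via $\phi\sim\kappa$) on the low-frequency side, same balance of the two contributions. For part $ii)$ the idea is also the same — isolate very low frequencies to produce the $\lA f\rA_{L^\infty}$ term, then split the remaining integral $\int|\xi|\min\{|\xi|,1\}|\hat f|\,\dxi$ at a threshold calibrated to $B_\phi$, using Cauchy--Schwarz weighted by $\phi^{-2}$ at both ends — but you implement the splitting dyadically over annuli $\{2^k\le|\xi|<2^{k+1}\}$ rather than the paper's single continuous cut at $\lambda=(B+1)^2$. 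The two are interchangeable and yield the same logarithmic gain $\log(2+B_\phi)^{(1-2a)/2}$; the dyadic version is perhaps slightly more transparent in tracking where the $k^{-2a}$ summability enters.

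One place where you should be more careful is the last sentence of part $ii)$. Your (correct) computation gives $\lA\nabla f\rA_{L^\infty}\lesssim 1+\lA f\rA_{L^\infty}+A_\phi^{1/2}\log(2+B_\phi)^{(1-2a)/2}$, and you pass to the stated bound via ``$A_\phi^{1/2}\le1+A_\phi$''. That inequality is true, but it does not by itself give $A_\phi^{1/2}\log(2+B_\phi)^{(1-2a)/2}\lesssim 1+A_\phi\log(2+B_\phi)^{(1-2a)/2}$: take $A_\phi$ small and $B_\phi$ large and the factor $\log(2+B_\phi)^{(1-2a)/2}$ multiplying the ``$1$'' survives unbounded. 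The paper's own displayed estimate
$\int_{|\xi|\le\lambda}|\xi|\min\{|\xi|,1\}|\hat f|\,\dxi\lesssim A\log(2+\lambda)^{(1-2a)/2}$
is dimensionally inconsistent with $A=A_\phi=\lA\D^{2,\phi}f\rA_{L^2}^2$ (the left side is linear in $\hat f$, the right side quadratic), so the statement of~\e{linf'} should really read $A_\phi^{1/2}$, exactly as your derivation produces. In other words you have in fact proved the correct (and slightly stronger) inequality; the discrepancy is a typo in the paper rather than a gap in your argument, and your attempt to patch it with $A_\phi^{1/2}\le1+A_\phi$ was unnecessary and does not quite close.
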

\begin{proof}We follow closely the proofs of similar results in \cite{AN1}.

$i)$ Let $\lambda >0$. 
Decompose the frequency space into low and high frequencies, at the frequency threshold $\la \xi\ra=\lambda$, to obtain
\begin{align*}
\lA f\rA_{\dot H^s}^2&\les 
\int_{\xR^2}|\xi|^{2s}|\hat{f}|^2 \dxi
=\int_{|\xi|\le \lambda}|\xi|^{2s}|\hat{f}|^2 \dxi
+\int_{|\xi|> \lambda} |\xi|^{2s}|\hat{f}|^2 \dxi\\
&\les\int_{|\xi|\le \lambda} \frac{\la \xi\ra^{2s-4}}{\kappa(|\xi|)^2}|\xi|^{4}
\phi(|\xi|)^2|\hat{f}|^2 \dxi
+\int_{|\xi|> \lambda} \frac{\la \xi\ra^{2s-5}}{\kappa(|\xi|)^2}|\xi|^{5}\phi(|\xi|)^2|\hat{f}|^2 \dxi.
\end{align*}
It follows that
$$
\lA f\rA_{\dot H^s}^2\lesssim \lambda^{2s-4} (\kappa(\lambda))^{-2} \Vert \D^{2,\phi}f\Vert_{L^2}^2+\lambda^{2s-5}  (\kappa(\lambda))^{-2} \blA\D^{\frac52,\phi}f\brA_{L^2}^2.
$$
Chose $\lambda=\Vert \D^{\frac52,\phi}(f)\Vert _{L^2}^2/\Vert \D^{2,\phi}(f)\Vert _{L^2}^2$ to get the wanted result. 

$ii)$ 
One has 
\begin{align*}
\lA\nabla f\rA _{L^\infty}
&\lesssim \lA f\rA_{L^\infty}
+\Vert \nabla( f-f\star \chi_1)\Vert _{L^\infty}\\
&\lesssim \lA f\rA_{L^\infty}+\int |\xi|\min\{|\xi|,1\}|\hat f(\xi)| \dxi.
\end{align*}
It is easy to see that 
\begin{align*}
	&\int_{|\xi|\leq \lam} |\xi|\min\{|\xi|,1\}|\hat f(\xi)| d\xi\lesssim A\log (2+\lam)^{\frac{1-2a}{2}},\\&\int_{|\xi|> \lam} |\xi|\min\{|\xi|,1\}|\hat f(\xi)| d\xi\lesssim  B\lam^{-1/2}.
\end{align*}
Choosing $\lam= (B+1)^2$
\begin{equation*}
	\Vert \nabla f\Vert _{L^\infty}\lesssim 1+ \lA f\rA_{L^\infty}+A\log (2+B)^{\frac{1-2a}{2}},
\end{equation*}
we obtain~\e{linf'}. This completes the proof. 
\end{proof}

\section*{Acknowledgments} 

\noindent The authors want to thank Beno{\^\i}t Pausader for suggesting us to consider initial data which do not belong to $L^2(\xR^2)$ 
as well as for his stimulating 
comments.

\noindent  T.A.\ acknowledges the SingFlows project (grant ANR-18-CE40-0027) 
of the French National Research Agency (ANR).  Q-H.N.\ 
is  supported  by the ShanghaiTech University startup fund.

\vfill
\begin{flushleft}
\textbf{Thomas Alazard}\\
Universit{\'e} Paris-Saclay, ENS Paris-Saclay, CNRS,\\
Centre Borelli UMR9010, avenue des Sciences, 
F-91190 Gif-sur-Yvette\\
France.

\vspace{1cm}

\textbf{Quoc-Hung Nguyen}\\
ShanghaiTech University, \\
393 Middle Huaxia Road, Pudong,\\
Shanghai, 201210,\\
China

\end{flushleft}

\end{document}